\documentclass[11pt]{article}
\usepackage[a4paper,margin=1in]{geometry}
\usepackage{amsmath,amsthm,amssymb,mathtools,mathrsfs,bm}
\usepackage{enumitem}
\usepackage{hyperref}
\hypersetup{colorlinks=true,linkcolor=blue,citecolor=blue,urlcolor=blue}
\numberwithin{equation}{section}
\usepackage{hyperref}
\newtheorem{theorem}{Theorem}[section]
\newtheorem{lemma}[theorem]{Lemma}
\newtheorem{proposition}[theorem]{Proposition}
\newtheorem{corollary}[theorem]{Corollary}
\theoremstyle{remark}
\newtheorem{remark}[theorem]{Remark}

\title{Local Turnpike Stability for Two-Population Second-Order Mean-Field Game Systems }

 \author{Mohamed Ghattassi\footnote{Department of Mathematics, New York University in Abu Dhabi, Saadiyat Island, P.O. Box 129188, Abu Dhabi, United Arab Emirates. {\sf Email:  mg6888@nyu.edu}} \and Fanze Kong \thanks{Department of Applied Mathematics, University of Washington, Seattle, WA 98195, USA; {\sf Email: fzkong@uw.edu}} \and  Nader Masmoudi \footnote{Department of Mathematics, New York University in Abu Dhabi, Saadiyat Island, P.O. Box 129188, Abu Dhabi, United Arab Emirates-- Courant Institute of Mathematical Sciences, New York University, 251 Mercer Street, New York, NY 10012, USA. {\sf Email:  nm30@nyu.edu}}}

\begin{document}

\maketitle

\begin{abstract}

This paper establishes a local turnpike property for a two-population, second-order mean-field game system with a quadratic Hamiltonian, in which the running cost for each population may depend on both population densities. In the case of two populations, imposing a scalar stability condition separately on each species is not sufficient because it neglects cross-interaction effects. The appropriate requirement is instead a matrix-valued coercivity condition on vector perturbations $(\mu_1,\mu_2)$, which involves the symmetric part of the linearized interaction matrix together with weighted Poincaré inequalities associated with the two stationary measures. Consequently, in the multi-population framework, diffusion still acts independently on each population, but the interactions are controlled by the full coupling matrix. A naive scalar energy functional fails to capture mixed second-variation terms arising from costs that depend on both densities. We prove that matrix coercivity is precisely equivalent to the strict positivity of this second variation, and we provide a spectral characterization in terms of the principal spectral value of the coupled stationary linearized operator. For linear models with cross-interactions, this yields explicit algebraic conditions that separate stable parameter ranges from unstable ones. In the non-potential framework, the skew-symmetric part of the interaction matrix leaves the coercivity threshold unchanged, but it can induce transient oscillatory dynamics. Moreover, the scalar turnpike theory extends in a natural, though nontrivial, manner to multi-population systems, where stability is governed by the positivity of a coupled matrix-valued second variation that encodes both self-interactions and cross-interactions.
\end{abstract}

\tableofcontents

\section{Introduction}
In this paper, we investigate a finite-horizon mean-field game model for two interacting populations as follows
\begin{align}\label{timedepre}
\left\{
\begin{array}{ll}
-\Delta u_1+\frac12|\nabla u_1|^2
=
\partial_tu_1+f_1(m_1,m_2),&x\in \mathbb T^N, t\in(0,T),
\\
\partial_t m_1
=
\Delta m_1+\nabla\cdot(m_1\nabla u_1),&x\in \mathbb T^N, t\in(0,T)
\\
-\Delta u_2+\frac12|\nabla u_2|^2
=
\partial_tu_2+f_2(m_1,m_2),&x\in \mathbb T^N, t\in(0,T),
\\
\partial_t m_2
=
\Delta m_2+\nabla\cdot(m_2\nabla u_2),&x\in \mathbb T^N, t\in(0,T),
\\
u_i(T,x)=u_{iT}(x),\qquad m_i(0,x)=m_{i0}(x),&x\in \mathbb T^N,
~~ i=1,2,
\end{array}
\right.
\end{align}
where $T>0$ denotes the time horizon,
$f_i:\mathbb R\times\mathbb{R}\to\mathbb{R}$, $i=1,2$ is a $C^2$ coupling function
whose first-order and second-order partial derivatives are bounded.  All data are assumed
to be periodic in the spatial variable $x$.
The initial distributions $m_{i0}$ and the terminal costs $u_{iT},$ $i=1,2$ are prescribed
functions belonging to $C^{2,\alpha}(\mathbb T^N)$.

We briefly introduce the derivation of multi-population  mean-field game systems.  As shown in \cite{Lasry,feleqi2013derivation}, assume that the $j$-th agent in the $i$-th population, with $i=1,2$ and $j=1,\ldots,n_i$, satisfies the following controlled stochastic differential equation (SDE):
\begin{align*}
dX_t^{i,j}
=
-\alpha_t^{i,j}\,dt
+
\sqrt{2}\,dB_t^{i,j},
\qquad
X_0^{i,j}=x^{i,j}\in\mathbb T^N,
\end{align*}
where $x^{i,j}$ is the initial state, $\alpha^{i,j}_t$ denotes the controlled velocity and $B_t^{i,j}$ represent the independent Brownian motion. Assuming that all agents within the $i$-th population are indistinguishable, we may drop the index $j$ and denote $X_t^{i,j}$ and $\alpha_t^{i,j}$ by $X_t^i$ and $\alpha_t^i$, respectively. Each agent in the $i$-th population then seeks to minimize the following expected cost: 
\begin{equation}\label{longsenseexpectation}
\begin{aligned}
J^i(\alpha^i;m_1,m_2)
=
\mathbb E\Bigg[
&\int_0^T
\left(
\frac12|\alpha_t^i|^2
+
 f_i\bigl(m_1(t,X_t^i),m_2(t,X_t^i)\bigr)
\right)\,dt
\\
&\qquad
+u_{iT}(X_T^i)
\Bigg],
\qquad i=1,2,
\end{aligned}
\end{equation}
where the function $f_i$ depends on the population density $m_1$ and $m_2$.  By applying the standard dynamic programming principle, the coupled PDE system consisting of Hamilton-Jacobi-Bellman equations and Fokker-Planck equations is formulated in (\ref{timedepre}). By investigating the long-time behavior of solutions to (\ref{timedepre}) and considering the ansatz $\displaystyle u_i(t,x)=\lambda_i(T-t)+\bar u_i(x)$, $\displaystyle m_i(t,x)=\bar m_i(x),$
, the associated ergodic mean-field game system is formally formulated as follows:
\begin{align}\label{ss1-recalled}
\left\{
\begin{array}{ll}
-\Delta \bar u_1+\frac12|\nabla \bar u_1|^2+\lambda_1
=
 f_1(\bar m_1,\bar m_2),&x\in \mathbb T^N,
\\
\Delta \bar m_1+\nabla\cdot(\bar m_1\nabla \bar u_1)=0,&x\in \mathbb T^N,
\\
-\Delta \bar u_2+\frac12|\nabla \bar u_2|^2+\lambda_2 
=
  f_2(\bar m_1,\bar m_2),&x\in \mathbb T^N,
\\
\Delta \bar m_2+\nabla\cdot(\bar m_2\nabla \bar u_2)=0,&x\in \mathbb T^N,
\\
\int_{\mathbb T^N} \bar m_1\,dx=\int_{\mathbb T^N}m_{10}\,dx=1,\int_{\mathbb T^N} \bar m_2\,dx=\int_{\mathbb T^N}m_{20}\,dx=1.
\end{array}
\right.
\end{align}
A natural question is whether the ergodic equilibrium
$(\bar u_1,\bar u_2,\bar m_1,\bar m_2)$
locally attracts nearby time-dependent solution
$(u_1,u_2,m_1,m_2)$ to (\ref{ss1-recalled})
on the interval $(0,T)$, provided that the initial-terminal data are sufficiently close to the stationary state.
This behavior is commonly known as the \emph{local turnpike property}.
Our objective is to establish such local stability results without imposing any global uniqueness assumption on stationary equilibria.  We remark that local turnpike stability is commonly established under the Lasry-Lions monotonicity condition.  Our goal is to understand the local turnpike stability of mean-field game systems in the absence of the standard monotonicity condition.

The present study is directly inspired by the local turnpike theory developed by Cirant and De Bernardi for mean-field games (MFG) and mean-field control problems with quadratic Hamiltonians \cite{cirant2025local}. Nonetheless, the contribution of this work does not amount to a straightforward extension of their scalar framework. The central difference resides in the structural properties of the systems under investigation: whereas Cirant and De Bernardi focus on a \emph{single-population} model, the current analysis addresses a \emph{multi-population} MFG system, in which two distinct distributions interact through potentially nontrivial cross-coupling mechanisms. In the single-population setting, the finite-horizon MFG system is given by
$$
\left\{
\begin{aligned}
-\partial_t u - \Delta u + \frac12|\nabla u|^2
&=
f(x,m),
\\
\partial_t m - \Delta m - \nabla\cdot(m\nabla u)
&= 0,
\end{aligned}
\right.
$$
where $(u,m)$ denotes, respectively, the value function and the population density. A stationary solution is a pair $(\bar u,\bar m)$, and the local turnpike analysis is conducted in a neighborhood of this stationary equilibrium. A central hypothesis in the work of Cirant and De Bernardi is a scalar second-variation-type condition of the form
$$
\int_{ \mathbb{T}^N}
f_m(x,\bar m)\,\mu^2
+
\bar m
\left|
\nabla\left(\frac{\mu}{\bar m}\right)
\right|^2
\,dx
\geq
\eta
\int_{ \mathbb{T}^N}
\bar m
\left|
\nabla\left(\frac{\mu}{\bar m}\right)
\right|^2
\,dx,
$$
for all mass-preserving perturbations $\mu$, i.e., perturbations satisfying $\int_ \mathbb{T}^N \mu\,dx = 0$. This condition is intrinsically scalar: the linearized coupling is fully encoded by the single coefficient $f_m(x,\bar m)$. The centerpiece of their analysis is to demonstrate that this local second-order positivity condition, together with the structural relation $ \nabla \bar m = -\bar m \nabla \bar u$, entails that finite-horizon solutions starting sufficiently near the stationary state converge to it at an exponential rate over most of the time interval $[0,T]$.

In contrast, the system investigated in the present work is given by (\ref{timedepre}).
In this framework, the density variable is no longer scalar-valued but instead takes values in $\mathbb{R}^2$ and is described by the vector field $\mathbf m = (m_1, m_2)$. Consequently, the coupling term becomes vector-valued and can be written as $\mathbf f(\mathbf m) = \bigl(f_1(m_1,m_2),\, f_2(m_1,m_2)\bigr)$. The linearization of this coupling around a stationary state $\bar{\mathbf m} = (\bar m_1,\bar m_2)$ is then not characterized by a scalar coefficient but rather by the Jacobian matrix
\[
A(x)
=
D_{\mathbf m}\mathbf f\bigl(\bar{\mathbf m}(x)\bigr)
=
\begin{pmatrix}
\partial_{m_1}f_1(\bar{\mathbf m}) &
\partial_{m_2}f_1(\bar{\mathbf m})
\\[4pt]
\partial_{m_1}f_2(\bar{\mathbf m}) &
\partial_{m_2}f_2(\bar{\mathbf m})
\end{pmatrix}.
\]
This constitutes the first major structural difference with respect to the scalar case. In the single-population theory, stability is governed by $f_m(x,\bar m)$. In the two-population framework, the relevant stability object is instead the symmetric part of the weighted interaction matrix, $ \displaystyle DA_s(x)  := \frac{DA(x) + A(x)^TD}{2}$ with the weight matrix $D=\text{diag}\{d_1,d_2\}$ and $d_1,d_2>0$. The natural analogue of the scalar stability condition is therefore the following matrix coercivity requirement:
$$
\int_{ \mathbb{T}^N}
\left[
\boldsymbol\mu^T DA_s(x)\boldsymbol\mu
+
\sum_{i=1}^2d_i
\bar m_i
\left|
\nabla\left(\frac{\mu_i}{\bar m_i}\right)
\right|^2
\right]dx
\geq
\eta
\sum_{i=1}^2d_i
\int_{ \mathbb{T}^N}
\bar m_i
\left|
\nabla\left(\frac{\mu_i}{\bar m_i}\right)
\right|^2dx,
$$
for all perturbations
$$
\boldsymbol\mu = (\mu_1,\mu_2),
\quad
\int_{\mathbb T^N}\mu_i\,dx = 0,
\qquad i=1,2,\quad \mu_i \in L^1(\mathbb T^N)\cap L^\infty(\mathbb T^N)
$$
with $ \displaystyle \frac{\mu_i}{\bar m_i}\in W^{1,2}(\bar m_i\,dx)$ and $\displaystyle \frac{\mu_i}{\bar m_i}\in L^\infty(\mathbb T^N)$.
This coercivity hypothesis has no counterpart in the scalar theory. It provides the natural two-population extension of the scalar second-variation condition introduced by Cirant and De Bernardi \cite{cirant2025local}. We also note that a related positivity condition for a symmetrized Jacobian-type matrix arising in two-population MFG systems was used to derive a priori estimates; see, for example, \cite{cirant2015multipopulation}.

The main contribution of this article is thus the identification and thorough examination of the correct \emph{matrix-valued} stability mechanism that governs local turnpike behavior in multi-population MFG systems. This result cannot be derived by simply applying the scalar theorem to each component separately. A component-by-component argument  would neglect the cross-interaction terms $ \partial_{m_2}f_1(\bar m_1,\bar m_2)\mu_2$, $\partial_{m_1}f_2(\bar m_1,\bar m_2)\mu_1$, which couple the two Hamilton-Jacobi equations with the two Fokker-Planck equations and may either enhance or attenuate the stability properties of the system.  Therefore, the analysis must be conducted at the level of the entire two-population system as a whole.   Concerning the local turnpike stability of (\ref{timedepre}), we summarize our main result as the following theorem.
\begin{theorem}
\label{thm:twopopturnpike}
Assume that \eqref{S1}, \eqref{S2}, and \eqref{matrixS2} hold. Let $\mathbf f\in C^2(\mathcal U)$; then there exist constants $\gamma>0$, $C>0$, and $\sigma>0$ independent of $T$ such that whenever
$$
\sum_{i=1}^2
\|m_{i0}-\bar m_i\|_{L^\infty}
+
\sum_{i=1}^2
\|u_{iT}-\bar u_i\|_{W^{1,\infty}}
\leq \gamma,
$$
and
$$
\int_{\mathbb T^N}m_{i0}\,dx=1,
\qquad m_{i0}>0,
$$
there exists at least one classical solution of \eqref{timedepre} satisfying
$$
\sum_{i=1}^{2} \|m_i(t,\cdot)-\bar m_i\|_{L^\infty(\mathbb T^N)}
\leq
C_D\gamma
\left(
e^{-\sigma_D t}
+
e^{-\sigma_D(T-t)}
\right),
$$
and
$$
\sum_{i=1}^2
\left\|
\sqrt{\bar m_i}
\left(
\nabla u_i(t,\cdot)-\nabla\bar u_i
\right)
\right\|_{L^2(\mathbb T^N)}
\leq
C_D\gamma
\left(
e^{-\sigma_D t}
+
e^{-\sigma_D(T-t)}
\right),
$$
where constants $C_D>0$ and $\sigma_D>0$ depending on $d_1,d_2>0.$
\end{theorem}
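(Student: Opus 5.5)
Following the strategy of Cirant and De Bernardi \cite{cirant2025local}, but working at the level of the full vector unknown $(\mu_1,\mu_2)$, we set $v_i=u_i-\bar u_i-\lambda_i(T-t)$ and $\mu_i=m_i-\bar m_i$. The perturbations solve
\begin{align*}
-\partial_t v_i-\Delta v_i+\nabla\bar u_i\cdot\nabla v_i+\tfrac12|\nabla v_i|^2&=f_i(\bar{\mathbf m}+\boldsymbol\mu)-f_i(\bar{\mathbf m}),\\
\partial_t\mu_i-\Delta\mu_i-\nabla\cdot(\mu_i\nabla\bar u_i)-\nabla\cdot(\bar m_i\nabla v_i)&=\nabla\cdot(\mu_i\nabla v_i),
\end{align*}
with $\int\mu_i=0$, $\mu_i(0)=m_{i0}-\bar m_i$ and $v_i(T)=u_{iT}-\bar u_i$. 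The solution will be produced by a fixed point of Schauder/Leray type on a ball of radius $C\gamma(e^{-\sigma t}+e^{-\sigma(T-t)})$ in a weighted space. This requires uniform-in-$T$ estimates for the linearized problem and a perturbative treatment of the quadratic remainders. Because there is no global uniqueness, only existence of one such solution is claimed, and the fixed point suffices. The $C^2$ bound on $\mathbf f$ on the neighborhood $\mathcal U$ (assumption \eqref{S1}) controls the remainder $f_i(\bar{\mathbf m}+\boldsymbol\mu)-f_i(\bar{\mathbf m})-(A\boldsymbol\mu)_i=O(|\boldsymbol\mu|^2)$. Positivity and regularity of $\bar m_i$ from \eqref{S2}, together with $\nabla\bar m_i=-\bar m_i\nabla\bar u_i$, make the Fokker--Planck operator self-adjoint in the weights $\bar m_i^{-1}$.

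\textbf{Key step (weighted duality energy).} For the linearized system we test the $v_i$ equation against $\mu_i$, multiply by $d_i$, and sum over $i$. Using $\nabla\bar m_i=-\bar m_i\nabla\bar u_i$, this gives
\[
-\frac{d}{dt}\sum_i d_i\int v_i\mu_i\,dx=\int\Big[\boldsymbol\mu^T DA_s\boldsymbol\mu+\sum_i d_i\bar m_i|\nabla v_i|^2\Big]dx .
\]
The antisymmetric part of $DA$ drops out. We then run the $L^2(\bar m_i^{-1})$ energy for $\mu_i$, whose dissipation is $\int\bar m_i|\nabla(\mu_i/\bar m_i)|^2$ and which is driven by $\bar m_i\nabla v_i$. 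Adding a small multiple of this to the duality identity, the matrix coercivity \eqref{matrixS2} together with the weighted Poincaré inequalities for $\bar m_i$ yields a quantity
\[
\Phi(t)=\sum_i d_i\Big(\|\mu_i\|^2_{L^2(\bar m_i^{-1})}+\|\sqrt{\bar m_i}\nabla v_i\|_{L^2}^2\Big)
\]
that satisfies a two-sided (forward/backward) differential inequality. An elementary ODE comparison lemma for such inequalities, as in the scalar paper, gives $\Phi(t)\le C(\Phi(0)+\Phi(T))(e^{-\sigma t}+e^{-\sigma(T-t)})$, with $\sigma$ depending on $\eta$ and on $d_1,d_2$ through $\min d_i/\max d_i$. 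This is where $C_D,\sigma_D$ come from. I expect this step to be the main obstacle: the cross terms $\partial_{m_2}f_1\mu_2$ can only be absorbed through $DA_s$, so both the weight $D$ and the small multiplier must be chosen consistently for both populations simultaneously.

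\textbf{Upgrading norms and closing.} The $L^2$-type decay is upgraded to $L^\infty$ for $\mu_i$ and $W^{1,\infty}$ for $v_i$ on unit time windows. On each window we apply parabolic regularity (Schauder or $L^p$ estimates for the HJB equation, and a Duhamel/heat-kernel bound for the FP equation) with constants independent of $T$. The window estimates are then concatenated, so the exponential profile is preserved. The boundary layers are handled by $\|m_{i0}-\bar m_i\|_{L^\infty}$ and $\|u_{iT}-\bar u_i\|_{W^{1,\infty}}\le\gamma$.

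Finally, we define the map $\boldsymbol\mu\mapsto\boldsymbol\mu'$ as follows: solve the HJB equations with the given $\boldsymbol\mu$, then solve the FP equations with the resulting $\nabla v_i$, and write the result as the linearized system plus nonlinear sources. The sources are $O(\gamma^2)(e^{-\sigma t}+e^{-\sigma(T-t)})$. This works because the product of two exponentially localized profiles is again bounded by the same profile up to constants. For $\gamma$ small, the ball is therefore invariant, and compactness comes from parabolic regularity. Schauder's theorem then gives the solution, and the estimates above give the stated bounds for $\|m_i-\bar m_i\|_{L^\infty}$ and $\|\sqrt{\bar m_i}(\nabla u_i-\nabla\bar u_i)\|_{L^2}$.
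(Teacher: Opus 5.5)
Your duality identity is correct, but the way you combine it with the $L^2(\bar m_i^{-1})$ energy does not use \eqref{matrixS2} in the non-monotone regime, which is the case the theorem is about. Set $\Psi_\epsilon=\sum_i d_i\int(\mu_iv_i+\epsilon\,\mu_i^2/\bar m_i)\,dx$, your ``duality identity plus $\epsilon$ times the energy''. For the linearized system,
\[
-\Psi_\epsilon'=\int_{\mathbb T^N}\boldsymbol\mu^TDA_s\boldsymbol\mu\,dx+K_D+2\epsilon G_D+2\epsilon\sum_{i=1}^2d_i\int_{\mathbb T^N}\bar m_i\nabla v_i\cdot\nabla z_i\,dx .
\]
Condition \eqref{matrixS2} only gives $\int\boldsymbol\mu^TDA_s\boldsymbol\mu\ge-(1-\eta)G_D$, so the coupling term can be negative and of the same order as the full diffusion. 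After Cauchy--Schwarz on the cross term, the right-hand side is bounded below by $c(G_D+K_D)$ only if $(1-\epsilon)^2<\eta$. A small $\epsilon$ therefore forces $\eta$ close to $1$, which is essentially monotone coupling.

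The missing idea is to take $\epsilon=1$, which is the paper's $\Phi_D=\sum_i d_i\int(\mu_iv_i+\mu_i^2/\bar m_i)$. The gradient terms then complete the square,
$\bar m_i|\nabla v_i|^2+2\bar m_i\nabla v_i\cdot\nabla z_i+2\bar m_i|\nabla z_i|^2=\bar m_i|\nabla v_i+\nabla z_i|^2+\bar m_i|\nabla z_i|^2$.
The leftover full $G_D$ then pairs with the coupling to form $\mathcal B_D[\boldsymbol\mu]\ge\eta G_D$ (Lemma~\ref{lemdissipationid}).

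Relatedly, the monotone quantity is this sign-indefinite cross functional, not the positive $\sum_id_i(\|\mu_i\|^2_{L^2(\bar m_i^{-1})}+\|\sqrt{\bar m_i}\nabla v_i\|^2_{L^2})$. You derive no differential inequality for $\|\sqrt{\bar m_i}\nabla v_i\|^2$, so the ``two-sided ODE comparison'' for your $\Phi$ has nothing to act on. The paper instead proceeds as follows:
\begin{itemize}
\item it uses $|\Phi_D|\le C(G_D+K_D)$ (Lemma~\ref{lemPhicontrol}) to get $\Phi_D'\le-\sigma_D|\Phi_D|$;
\item this gives two-sided exponential bounds on $\Phi_D$ and on $\int_s^t(G_D+K_D)$;
\item it then recovers pointwise decay of $Z_D$ and $K_D$ from $Z_D'+G_D\le CK_D$ and $-K_D'\le C(K_D+Z_D)$ on unit time windows.
\end{itemize}

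Two further problems remain. First, your final bound in terms of $\Phi(0)+\Phi(T)$ involves $\|\sqrt{\bar m_i}\nabla v_i(0)\|$ and $\|\mu_i(T)\|$, which are not data. The sign structure of $\Phi_D$ is what lets Lemma~\ref{lemma36} control the endpoint values by $\mu_{i0}$ and $\nabla v_{iT}$ alone.

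Second, consider your decoupled map ``solve HJB with the given $\boldsymbol\rho$, then FP''. Its linear part $\boldsymbol\rho\mapsto\boldsymbol\mu'$, driven by the source $A\boldsymbol\rho$, is of order one. The coercivity estimate applies only to solutions of the coupled system, not to this map, so invariance of the ball does not follow. You can either use Leray--Schauder with a priori bounds at fixed points of a homotopy, as the paper does, or keep $A\boldsymbol\mu$ inside a coupled linear forward--backward solve and freeze only the quadratic terms.
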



The study of the turnpike phenomenon in optimal control has developed in parallel with research on the long-time behavior of solutions in mean-field Game (MFG) theory. In this context, it is natural to single out the seminal contributions \cite{MR2928380,MR3103242}, as well as more recent advances such as \cite{MR4294187,cirant2024long,ersland2025long}. A central unifying aspect of these works is that their assumptions are designed to produce a global turnpike property: both the time-dependent and stationary problems admit unique solutions, and the former converge to the latter as the time horizon tends to infinity, independently of the specific initial or terminal conditions. Typically, these assumptions are expressed as monotonicity conditions or, more generally, as smallness constraints on the problem data.  More recently, a stability analysis was performed in \cite{ghattassi2023stability}, where the authors investigate the long-time behavior of small perturbations of spatially homogeneous states in elongated, corridor-shaped domains. They show that spatially uniform flow configurations are nonlinearly stable in the monotone regime. Their approach is based on a Fourier analysis of the linearized system, combined with a fixed-point argument for the nonlinear generalized Hughes model \cite{MR4913256}.

Our work extends the two-population version of the scalar second-variation condition introduced by Cirant and De Bernardi \cite{cirant2025local}. In that paper, the authors analyze problems without assuming the standard monotonicity condition, a setting in which the existence of several stationary states is usually expected. The stability of stationary equilibria in such non-monotone regimes has so far been investigated only in a relatively small number of contributions \cite{MR4661001,MR4701427}, and the available results remain largely confined to rather specific classes of models. In addition, once monotonicity is dropped, dynamical behaviors that significantly differ from the turnpike phenomenon have been documented, including the emergence of periodic solutions \cite{MR3944249}. The central contribution of this article is therefore the identification and detailed analysis of the appropriate \emph{matrix-valued} stability mechanism that dictates local turnpike behavior in multi-population MFG systems. This mechanism cannot be obtained by merely applying the scalar theorem to each component in isolation. A componentwise argument would overlook the cross-interaction terms $ \partial_{m_2}f_1(\bar m_1,\bar m_2)\mu_2$, $\partial_{m_1}f_2(\bar m_1,\bar m_2)\mu_1$, which link the two Hamilton–Jacobi equations with the two Fokker–Planck equations and can either reinforce or weaken the stability properties of the full system. Consequently, the analysis must be carried out at the level of the coupled two-population system as a whole.

The paper is organized as follows. Section~\ref{secpre} introduces the two-population mean-field game system and recalls the main assumptions on the Hamiltonian, the coupling terms, and the stationary density constraints. In Section~\ref{sectwopopturnpike}, we establish the coercivity condition and prove the local turnpike stability result. Section~\ref{secspectralc} provides a spectral interpretation of assumption \eqref{matrixS2}. In particular, we show that this coercivity condition is tightly linked to the eigenvalue problem associated with the linearized elliptic system. Section~\ref{secpotentialenergy} investigates how \eqref{matrixS2} relates to the second variation of the variational formulation corresponding to potential two-population mean-field games. In Section~\ref{seccrossnonpotential}, we give several examples illustrating cross-interaction thresholds and non-potential interactions. We conclude the paper by presenting a summary of our findings and outlining potential directions for future research.

\section{Preliminaries}\label{secpre}
This section is devoted to introducing the notation and the assumptions on the coupling functions and ergodic solutions used throughout the paper. First of all, we define $\bar{\mathbf m} =(\bar m_1,\bar m_2)$, $\bar{\mathbf u} =(\bar u_1,\bar u_2)$, and $\boldsymbol\lambda =(\lambda_1,\lambda_2)$,
where $(\bar u_1,\bar u_2,\bar m_1,\bar m_2)$ is a solution to (\ref{ss1-recalled}). We assume the following regularity and positivity properties for the equilibrium system:
$$
\bar m_i\in C^{2+\theta}(\mathbb T^N),\qquad
\bar u_i\in C^{3+\theta}(\mathbb T^N),\qquad
\bar m_i>0,
\qquad i=1,2.
$$for some $\theta\in(0,1)$. In addition, we assume $f_i\in C^2(\mathcal U)$,
for $i=\{1,2\}$
where $\mathcal U\subset (0,\infty)^2$ is an open neighborhood of the compact set $\{\bar{\mathbf m}(x):x\in\mathbb T^N\}$. Moreover, recall that $(u_1,u_2,m_1,m_2)$ is a solution to (\ref{timedepre}), then we define the perturbations
\begin{align}\label{pertvu}
v_i(t,x)
 =
u_i(t,x)-\bar u_i(x)-\lambda_i(T-t),
\qquad
\mu_i(t,x)
 =
m_i(t,x)-\bar m_i(x),
\end{align}
and the normalized fluctuations
\begin{align}\label{norfluct}
z_i(t,x) =\frac{\mu_i(t,x)}{\bar m_i(x)}.
\end{align}
In view of \eqref{timedepre}, the total masses of $m_1$ and $m_2$ are conserved.  Moreover, noting that $\bar m_i$, $i=1,2$ satisfy the integral conditions given in (\ref{ss1-recalled}), we have
\begin{align}\label{intgconditionmu}
\int_{\mathbb T^N}\mu_i(t,x)\,dx=0,
\qquad i=1,2.
\end{align}

Now, we list some structural hypotheses satisfied by  the ergodic solution $(\bar{ \mathbf u},\bar{\mathbf  m},\boldsymbol{\lambda})$.  Suppose that $\bar m_i$, $i=1,2$ are classical, positive everywhere, which satisfy
\begin{equation}\label{S1}
\nabla\bar m_i=-\bar m_i\nabla\bar u_i,
\qquad i=1,2,
\end{equation}
and the weighted Poincar\'e inequality
\begin{equation}\label{S2}
\int_{\mathbb T^N}
\left(
\varphi-\int_{\mathbb T^N}\varphi \bar m_i\,dx
\right)^2
\bar m_i\,dx
\leq
C_{P,i}
\int_{\mathbb T^N}|\nabla \varphi|^2\bar m_i\,dx,
~\text{for all } \varphi\in W^{1,2}(\bar m_i\,dx), i=1,2.
\end{equation}
We set
\begin{align}\label{matrixA}
A(x)=D_{\mathbf m}\mathbf f(\bar{\mathbf m}(x))
=
\begin{pmatrix}
\partial_{m_1}f_1(\bar{\mathbf m}(x))&
\partial_{m_2}f_1(\bar{\mathbf m}(x))
\\
\partial_{m_1}f_2(\bar{\mathbf m}(x))&
\partial_{m_2}f_2(\bar{\mathbf m}(x))
\end{pmatrix},\quad
D=\begin{pmatrix}
d_1&
0
\\
0&
d_2
\end{pmatrix} 
\end{align}
with $d_1,d_2>0$ and define the weighted symmetric part
$$
DA_s(x) =\frac{DA(x)+A(x)^TD}{2}.
$$
We now assume the following coercivity condition for the two-population system:
\begin{equation}\label{matrixS2}
\int_{\mathbb T^N}
\left[
\boldsymbol\mu^T DA_s(x)\boldsymbol\mu
+
\sum_{i=1}^2d_i
\bar m_i
\left|
\nabla\left(\frac{\mu_i}{\bar m_i}\right)
\right|^2
\right]dx
\geq
\eta
\sum_{i=1}^2d_i
\int_{\mathbb T^N}
\bar m_i
\left|
\nabla\left(\frac{\mu_i}{\bar m_i}\right)
\right|^2dx,
\end{equation}
where $\eta\in(0,1)$ is a constant and $\boldsymbol\mu=(\mu_1,\mu_2)$ satisfies $\mu_i \in L^1(\mathbb T^N)\cap L^\infty(\mathbb T^N)$, $\frac{\mu_i}{\bar m_i}\in W^{1,2}(\bar m_i\,dx)$, $\frac{\mu_i}{\bar m_i}\in L^\infty(\mathbb T^N)$, and
$\displaystyle \int_{\mathbb T^N}\mu_i\,dx=0$ for $i=1,2$.
 We next list several equivalent forms of \eqref{matrixS2} in the following lemma.
\begin{lemma}\label{lemequi}
Let $\bar{\mathbf m}=(\bar m_1,\bar m_2)$ and assume that $\bar m_i$ satisfies a weighted Poincaré inequality.  Suppose also that $A\in L^\infty(\mathbb T^N)$ and constants $d_1,d_2>0$,
then the following properties are equivalent.
\begin{itemize}
\item[(a)] There exists $\eta>0$ such that
\[
\sum_{i=1}^2
\int_{\mathbb T^N}
d_i\bar m_i
\left|
\nabla \left(\frac{\mu_i}{\bar m_i}\right)
\right|^2dx
+
\int_{\mathbb T^N}
\boldsymbol\mu^T DA_s\boldsymbol\mu\,dx
\ge
\eta
\sum_{i=1}^2d_i
\int_{\mathbb T^N}
\bar m_i
\left|
\nabla\left(\frac{\mu_i}{\bar m_i}\right)
\right|^2dx .
\]

\item[(b)] There exists $\eta'>0$ such that
\[
\sum_{i=1}^2
\int_{\mathbb T^N}d_i
\bar m_i
\left|
\nabla \left(\frac{\mu_i}{\bar m_i}\right)
\right|^2dx
+
\int_{\mathbb T^N}
\boldsymbol\mu^TDA_s\boldsymbol\mu\,dx
\ge
\eta'
\sum_{i=1}^2d_i
\int_{\mathbb T^N}
\frac{\mu_i^2}{\bar m_i}\,dx .
\]

\item[(c)] There exists $\eta''>0$ such that
\[
\sum_{i=1}^2
\int_{\mathbb T^N}
d_i\bar m_i
\left|
\nabla \left(\frac{\mu_i}{\bar m_i}\right)
\right|^2dx
+
\int_{\mathbb T^N}
\boldsymbol\mu^T DA_s\boldsymbol\mu\,dx
\ge
\eta''
\sum_{i=1}^2 d_i
\int_{\mathbb T^N}
\mu_i^2\,dx ,
\]
\end{itemize}
for certain constants \(\eta,\eta',\eta''>0\) and for all
\(\mu\in L^1(\mathbb{T}^N)\cap L^\infty(\mathbb{T}^N)\)
such that $ \displaystyle \int_{\mathbb{T}^N} \mu(x)\,dx=0$ and $ \displaystyle\frac{\mu}{\bar m}\in W^{1,2}(\bar m\,dx)$.
\end{lemma}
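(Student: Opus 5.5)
Throughout, write $z_i=\mu_i/\bar m_i$, and use the notation
\[
\mathcal E(\boldsymbol\mu)=\sum_i d_i\!\int\bar m_i|\nabla z_i|^2\,dx+\int\boldsymbol\mu^TDA_s\boldsymbol\mu\,dx,
\qquad
G=\sum_i d_i\!\int \bar m_i|\nabla z_i|^2\,dx .
\]
The starting observation is that the constraint $\int\mu_i\,dx=0$ means exactly $\int z_i\bar m_i\,dx=0$. Hence the weighted Poincaré inequality \eqref{S2}, applied to $\varphi=z_i$, gives
\[
\int \frac{\mu_i^2}{\bar m_i}\,dx=\int z_i^2\bar m_i\,dx\le C_{P,i}\int\bar m_i|\nabla z_i|^2\,dx .
\]
Since $\bar m_i$ is continuous and positive on the compact torus, we also have $0<c_0\le\bar m_i\le C_0$. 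This yields $\int\mu_i^2\,dx\le C_0\int\mu_i^2/\bar m_i\,dx$ and, conversely, $\int\mu_i^2/\bar m_i\,dx\le c_0^{-1}\int\mu_i^2\,dx$. So the three right-hand sides are ordered:
\[
\sum_i d_i\!\int\mu_i^2\,dx\;\lesssim\;\sum_i d_i\!\int\frac{\mu_i^2}{\bar m_i}\,dx\;\lesssim\;G .
\]

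\emph{Trivial implications.} For (a)$\Rightarrow$(b), we have $\mathcal E\ge\eta G\ge \eta(\max_i C_{P,i})^{-1}\sum_i d_i\int\mu_i^2/\bar m_i\,dx$. For (b)$\Rightarrow$(c), we have $\mathcal E\ge\eta'\sum_i d_i\int \mu_i^2/\bar m_i\,dx\ge \eta' C_0^{-1}\sum_i d_i\int\mu_i^2\,dx$.

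\emph{Converse (c)$\Rightarrow$(a).} This is the step requiring an idea, since (a) has the strongest right-hand side. I would use a convex-combination trick. Set $a=\|DA_s\|_{L^\infty}$, so that $\bigl|\int\boldsymbol\mu^TDA_s\boldsymbol\mu\,dx\bigr|\le a\int|\boldsymbol\mu|^2\,dx\le \frac{a}{d_{\min}}\sum_i d_i\int\mu_i^2\,dx$. Note also that
\[
\sum_i d_i\int\mu_i^2\,dx\le C_0\max_i C_{P,i}\,G=:K\,G .
\]
For $\theta\in(0,1)$, split $\mathcal E=\theta\mathcal E+(1-\theta)\mathcal E$. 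Bound the first piece from below by
\[
\theta G-\theta\frac{a}{d_{\min}}\sum_i d_i\int\mu_i^2\,dx,
\]
and the second by (c), namely $(1-\theta)\eta''\sum_i d_i\int\mu_i^2\,dx$. This gives
\[
\mathcal E\ge\theta G+\Bigl((1-\theta)\eta''-\theta\frac{a}{d_{\min}}\Bigr)\sum_i d_i\int\mu_i^2\,dx .
\]
Choose $\theta$ small enough that the bracket is nonnegative, for instance $\theta=\eta''/(\eta''+a/d_{\min})$. Then $\mathcal E\ge\theta G$, which is (a) with $\eta=\theta$ (reduced below $1$ if needed). This closes the cycle (a)$\Rightarrow$(b)$\Rightarrow$(c)$\Rightarrow$(a).

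\emph{Technical points.} The test class is as stated, with $z_i\in W^{1,2}(\bar m_i\,dx)\cap L^\infty$, so all integrals are finite and \eqref{S2} applies directly; no density argument is required. The only point to flag is that the constants depend on $d_{\min}$, $\|A\|_\infty$, $C_{P,i}$ and the bounds on $\bar m_i$. These bounds are the quantitative input that makes the $L^2$, weighted-$L^2$ and Dirichlet-energy norms comparable on mean-zero perturbations. I expect the only real obstacle to be the converse direction, which is handled by the interpolation above.
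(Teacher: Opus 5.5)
Your proposal is correct and follows the paper's proof: the same cycle (a)$\Rightarrow$(b)$\Rightarrow$(c)$\Rightarrow$(a), using the weighted Poincaré inequality, then the upper bound on $\bar m_i$, then the same convex-combination splitting with the weight chosen so the $L^2$ bracket is nonnegative. Your bound $\|DA_s\|_{L^\infty}/d_{\min}$ on the interaction term replaces the paper's $\sqrt{d_{\max}/d_{\min}}\,\|A\|_{L^\infty}$ and is equally valid.
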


\begin{proof}

We first prove that $(a)$ implies $(b)$. Since
\[
\int_{\mathbb T^N} \frac{\mu_i}{\bar m_i}\bar m_i\,dx
=
\int_{\mathbb T^N}\mu_i\,dx
=0,
\]
the weighted Poincaré inequality gives
\[
\int_{\mathbb T^N} \frac{\mu_i^2}{\bar m_i}\,dx
=
\int_{\mathbb T^N}
\left(\frac{\mu_i}{\bar m_i}\right)^2
\bar m_i\,dx
\le
C_{P,i}
\int_{\mathbb T^N}
\bar m_i
\left|
\nabla\left(\frac{\mu_i}{\bar m_i}\right)
\right|^2dx .
\]
Let $C_P = \max\{C_{P,1},C_{P,2}\}$. Then we have 
\[
\sum_{i=1}^2d_i
\int_{\mathbb T^N}
\frac{\mu_i^2}{\bar m_i}\,dx
\le
C_P
\sum_{i=1}^2d_i
\int_{\mathbb T^N}
\bar m_i
\left|
\nabla\!\left(\frac{\mu_i}{\bar m_i}\right)
\right|^2dx ,
\]
where $d_1,d_2>0.$
By applying $(a)$, we deduce that 
\[
\sum_{i=1}^2
\int_{\mathbb T^N}d_i
\bar m_i
\left|
\nabla \left(\frac{\mu_i}{\bar m_i}\right)
\right|^2dx
+
\int_{\mathbb T^N}
\boldsymbol\mu^T DA_s\boldsymbol\mu\,dx
\ge
\eta
\sum_{i=1}^2d_i
\int_{\mathbb T^N}
\bar m_i
\left|
\nabla \left(\frac{\mu_i}{\bar m_i}\right)
\right|^2dx
\ge
\frac{\eta}{C_P}
\sum_{i=1}^2d_i
\int_{\mathbb T^N}
\frac{\mu_i^2}{\bar m_i}\,dx .
\]
Therefore, $(b)$ is satisfied with $\eta'=\frac{\eta}{C_P}$. We now show that $(b)$ implies $(c)$. Since $\bar m_i\in L^\infty(\mathbb T^N)$, we have
\[
\int_{\mathbb T^N}
\frac{\mu_i^2}{\bar m_i}\,dx
\ge
\frac{1}{\|\bar m_i\|_{L^\infty(\mathbb T^N)}}
\int_{\mathbb T^N}
\mu_i^2\,dx .
\]
We define $\displaystyle
M =\max_{i=1,2}\|\bar m_i\|_{L^\infty(\mathbb T^N)}$. Then, we have 
\[
\sum_{i=1}^2d_i
\int_{\mathbb T^N}
\frac{\mu_i^2}{\bar m_i}\,dx
\ge
\frac{1}{M}
\sum_{i=1}^2
d_i\int_{\mathbb T^N}
\mu_i^2\,dx ,
\]
where $d_1,d_2>0$ are constants.
Moreover, $(b)$ yields
\[
\sum_{i=1}^2
\int_{\mathbb T^N}
d_i\bar m_i
\left|
\nabla \left(\frac{\mu_i}{\bar m_i}\right)
\right|^2dx
+
\int_{\mathbb T^N}
\boldsymbol\mu^T DA_s\boldsymbol\mu\,dx
\ge
\frac{\eta'}{M}
\sum_{i=1}^2d_i
\int_{\mathbb T^N}
\mu_i^2\,dx ,
\]
which is the conclusion $(c)$ with $\displaystyle
\eta''=\frac{\eta'}{M}$. Finally, we verify that $(c)$ implies $(a)$. Fix $\varepsilon\in(0,1)$. We write
\begin{align}
\int_{\mathbb T^N}\boldsymbol\mu^T DA_s(x)\boldsymbol\mu
+
\sum_{i=1}^2
d_i\bar m_i
\left|
\nabla\left(\frac{\mu_i}{\bar m_i}\right)
\right|^2
 dx
=&
(1-\varepsilon)\int_{\mathbb T^N}\bigg[\boldsymbol\mu^T DA_s(x)\boldsymbol\mu
+
\sum_{i=1}^2d_i
\bar m_i
\left|
\nabla\left(\frac{\mu_i}{\bar m_i}\right)
\right|^2
\bigg]dx\nonumber\\
+&
\varepsilon \int_{\mathbb T^N}\bigg[\boldsymbol\mu^T DA_s(x)\boldsymbol\mu
+
\sum_{i=1}^2d_i
\bar m_i
\left|
\nabla\left(\frac{\mu_i}{\bar m_i}\right)
\right|^2
\bigg]dx .
\end{align}
By applying $(c)$ to the first term, we get
\begin{equation*}
\begin{aligned}
\int_{\mathbb T^N}\boldsymbol\mu^TD A_s(x)\boldsymbol\mu
+&
\sum_{i=1}^2d_i
\bar m_i
\left|
\nabla\left(\frac{\mu_i}{\bar m_i}\right)
\right|^2
 dx\\
\ge &
(1-\varepsilon)\eta''
\sum_{i=1}^2
\int_{\mathbb T^N} d_i\mu_i^2\,dx
+
\varepsilon
\sum_{i=1}^2
\int_{\mathbb T^N}d_i
\bar m_i
\left|
\nabla \left(\frac{\mu_i}{\bar m_i}\right)
\right|^2dx
+
\varepsilon
\int_{\mathbb T^N}
\boldsymbol\mu^T DA_s(x)\boldsymbol\mu\,dx .
\end{aligned}
\end{equation*}
Since $A\in L^\infty$, there exists $K=\|A\|_{L^\infty(\mathbb T^N)}$ such that
\begin{align}
&\int_{\mathbb T^N}\boldsymbol\mu^T DA_s(x)\boldsymbol\mu
+
\sum_{i=1}^2d_i
\bar m_i
\left|
\nabla\left(\frac{\mu_i}{\bar m_i}\right)
\right|^2
 dx\nonumber\\
\ge&
\left((1-\varepsilon)\eta''-\varepsilon \sqrt{\frac{d_{\max}}{d_{\min}}}K\right)
\sum_{i=1}^2
\int_{\mathbb T^N} d_i\mu_i^2\,dx
+
\varepsilon
\sum_{i=1}^2
\int_{\mathbb T^N}d_i
\bar m_i
\left|
\nabla \left(\frac{\mu_i}{\bar m_i}\right)
\right|^2dx ,
\end{align}
where 
\begin{align}\label{dmaxdmin}
d_{\max}=\max\{d_1,d_2\} ,~~d_{\min}=\min\{d_1,d_2\}.
\end{align}
We choose $ \displaystyle \varepsilon = \frac{\eta''}{\eta''+\sqrt{\frac{d_{\max}}{d_{\min}}}K}$ so that $( \displaystyle 1-\varepsilon)\eta''-\varepsilon \sqrt{\frac{d_{\max}}{d_{\min}}}K=0$. Then, we get 
\[
\int_{\mathbb T^N}\boldsymbol\mu^T DA_s(x)\boldsymbol\mu
+
\sum_{i=1}^2d_i
\bar m_i
\left|
\nabla\left(\frac{\mu_i}{\bar m_i}\right)
\right|^2
 dx
\ge
\frac{\eta''}{\eta''+K}
\sum_{i=1}^2d_i
\int_{\mathbb T^N}
\bar m_i
\left|
\nabla\left(\frac{\mu_i}{\bar m_i}\right)
\right|^2dx .
\]
Thus, $(a)$ follows with $\displaystyle \eta=\frac{\eta''}{\eta''+\sqrt{\frac{d_{\max}}{d_{\min}}}\|A\|_{L^\infty(\mathbb T^N)}}$. This concludes the proof.\end{proof}
 
On the basis of the preliminary definitions and assumptions introduced above, we are now prepared to state the main result concerning the local turnpike stability of the stationary equilibrium $(\bar{\mathbf u}, \bar{\mathbf m})$.

\section{Matrix Coercivity and Local Turnpike Stability}\label{sectwopopturnpike}

Throughout this section, we restrict attention to classical solutions. We first examine the perturbation system satisfied by \((v_i,\mu_i)_{i=1,2}\). By combining \eqref{timedepre} with \eqref{ss1-recalled}, we deduce that the pair \((v_i,\mu_i)_{i=1,2}\), as defined in \eqref{pertvu}, satisfies the following system:
\begin{align}\label{pertsystem}
\left\{
\begin{array}{ll}
-\partial_t v_i-\Delta v_i+\nabla\bar u_i\cdot\nabla v_i
=
f_i(\bar{\mathbf m}+\boldsymbol\mu)
-
f_i(\bar{\mathbf m})
-\dfrac12|\nabla v_i|^2, &x\in\mathbb T^N,\; t\in (0,T),
\\[0.4em]
\partial_t\mu_i-\Delta\mu_i
-\nabla\cdot(\mu_i\nabla\bar u_i)
-\nabla\cdot(\bar m_i\nabla v_i)
=
\nabla\cdot(\mu_i\nabla v_i),&x\in\mathbb T^N,\; t\in (0,T),\\[0.4em]
v_i(T,x)
=
u_{iT}(x)-\bar u_i(x),&x\in\mathbb T^N, 
\\[0.2em]
\mu_i(0,x)
=
m_{i0}(x)-\bar m_i(x),&x\in\mathbb T^N,  
\end{array}
\right.
\end{align}
for $i=1,2$. Using \eqref{S1} and \eqref{norfluct}, the system \eqref{pertsystem} can be equivalently expressed as
\begin{align}\label{weightpertsystem}
\left\{
\begin{array}{ll}
-\partial_t v_i
-\dfrac1{\bar m_i}\nabla\cdot(\bar m_i\nabla v_i)
 =
f_i(\bar{\mathbf m}+\boldsymbol\mu)
-
f_i(\bar{\mathbf m})
-\dfrac12|\nabla v_i|^2,&x\in\mathbb T^N,\; t\in (0,T),
\\[0.4em]
\partial_t\mu_i
-\nabla\cdot(\bar m_i\nabla z_i)
-\nabla\cdot(\bar m_i\nabla v_i)
=
\nabla\cdot(\mu_i\nabla v_i),&x\in\mathbb T^N,\; t\in (0,T),\\[0.4em]
v_i(T,x)
=
u_{iT}(x)-\bar u_i(x),&x\in\mathbb T^N,
\\[0.2em]
z_i(0,x)
=
\dfrac{m_{i0}}{\bar m_i}(x)-1,&x\in\mathbb T^N.
\end{array}
\right.
\end{align}
We aim to investigate the dynamical behavior of $(v_i,z_i)_{i=1,2}$ given in (\ref{weightpertsystem}). As shown in \cite{cirant2025local}, we first construct a Lyapunov functional associated with system (\ref{weightpertsystem}). In this work, we exploit the turnpike property satisfied by the Lyapunov functional and subsequently derive the desired decay estimates for $(v_i,\mu_i)_{i=1,2}$ under an appropriate local smallness assumption. Finally, the existence of at least one trajectory satisfying this smallness condition is obtained by means of a standard Leray–Schauder fixed-point argument.  To this end, we introduce the functional
\[
\Phi_D(t)
 =
\sum_{i=1}^2d_i
\int_{\mathbb T^N}
\left(
\mu_i v_i+\frac{\mu_i^2}{\bar m_i}
\right)\,dx
\]
with $d_1,d_2>0$ and define
\begin{align}\label{funcGK}
G_D(t)
 &=
\sum_{i=1}^2
d_i\int_{\mathbb T^N}\bar m_i|\nabla z_i|^2\,dx
=\sum_{i=1}^2G_i,
\qquad
K_D(t)
 =
\sum_{i=1}^2d_i
\int_{\mathbb T^N}\bar m_i|\nabla v_i|^2\,dx
=\sum_{i=1}^2K_i,
\end{align}
as well as
\begin{align}\label{funcZ}
Z_D(t)=\sum_{i=1}^2
d_i\int_{\mathbb T^N}\frac{\mu_i^2}{\bar m_i}\,dx = \sum_{i=1}^2d_i
\int_{\mathbb T^N}z_i^2\bar m_i\,dx
=\sum_{i=1}^2Z_i,
\end{align}
where \((v_i,\mu_i,z_i)_{i=1,2}\) denotes a solution to \eqref{weightpertsystem}.  
We begin by deriving the following a priori estimate for \(\Phi_D\).

\begin{lemma}\label{lemPhicontrol}
There is a constant $C_\Phi>0$, depending only on $\bar m_1,\bar m_2$ and on the weighted Poincaré constants \eqref{S2}, such that
$$
|\Phi_D(t)| \leq C_\Phi\bigl(G_D(t)+K_D(t)\bigr) \qquad \forall t\in[0,T].
$$ 
\end{lemma}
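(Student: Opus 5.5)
The plan is to estimate the two pieces of $\Phi_D$ separately, componentwise in $i$, and to reduce each to the weighted Poincaré inequality \eqref{S2} using only the zero-mass constraint \eqref{intgconditionmu}. Throughout, $C_P=\max\{C_{P,1},C_{P,2}\}$ as in the proof of Lemma~\ref{lemequi}. I work with classical solutions, so $z_i(t,\cdot)$ and $v_i(t,\cdot)$ are smooth on $\mathbb T^N$ and belong to $W^{1,2}(\bar m_i\,dx)$. This makes \eqref{S2} applicable at each fixed $t$.

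\textbf{Quadratic term.} Since $\int_{\mathbb T^N} z_i\bar m_i\,dx=\int_{\mathbb T^N}\mu_i\,dx=0$, the weighted Poincaré inequality applied to $\varphi=z_i$ gives
\[
\int_{\mathbb T^N}\frac{\mu_i^2}{\bar m_i}\,dx=\int_{\mathbb T^N} z_i^2\bar m_i\,dx\le C_{P,i}\int_{\mathbb T^N}\bar m_i|\nabla z_i|^2\,dx .
\]
Multiplying by $d_i$ and summing over $i$ yields $Z_D(t)\le C_P\,G_D(t)$.

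\textbf{Cross term.} This is the only step needing an idea: $v_i$ itself is not mean-zero, so its $L^2(\bar m_i)$ norm is not controlled by $K_i$. The remedy is to subtract a constant. Set $c_i(t)=\int_{\mathbb T^N} v_i\bar m_i\,dx$, using that $\bar m_i\,dx$ is a probability measure. Because $\int_{\mathbb T^N}\mu_i\,dx=0$, we have
\[
\int_{\mathbb T^N}\mu_i v_i\,dx=\int_{\mathbb T^N} z_i\,(v_i-c_i)\,\bar m_i\,dx .
\]
By Cauchy--Schwarz in $L^2(\bar m_i\,dx)$, followed by \eqref{S2} applied to both $z_i$ and $v_i$, this is bounded by
\[
\Bigl(C_{P,i}\int_{\mathbb T^N}\bar m_i|\nabla z_i|^2\,dx\Bigr)^{1/2}\Bigl(C_{P,i}\int_{\mathbb T^N}\bar m_i|\nabla v_i|^2\,dx\Bigr)^{1/2}.
\]
Multiplying by $d_i$ gives $d_i\bigl|\int_{\mathbb T^N}\mu_i v_i\,dx\bigr|\le C_P\sqrt{G_iK_i}\le \tfrac{C_P}{2}(G_i+K_i)$.

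\textbf{Conclusion.} Summing over $i=1,2$ and combining with the bound on $Z_D$ gives
\[
|\Phi_D(t)|\le \tfrac{C_P}{2}(G_D+K_D)+C_P\,G_D\le \tfrac{3C_P}{2}\bigl(G_D(t)+K_D(t)\bigr).
\]
So one may take $C_\Phi=\tfrac32 C_P$. The weights $d_i$ are already built into $G_i$ and $K_i$, so $C_\Phi$ depends only on $\bar m_1,\bar m_2$ through their Poincaré constants, as stated, and it is independent of $t$ and $T$.
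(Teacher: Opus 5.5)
Your proof is correct and follows the paper's argument. You subtract the $\bar m_i$-weighted mean of $v_i$ using $\int_{\mathbb T^N}\mu_i\,dx=0$, apply Cauchy--Schwarz in $L^2(\bar m_i\,dx)$ together with the weighted Poincaré inequality \eqref{S2}, and bound $Z_D\le C_P G_D$. The only cosmetic difference is that you apply Poincaré to $z_i$ before Young's inequality, which gives the explicit constant $C_\Phi=\tfrac32 C_P$; the paper instead first bounds the cross term by $C(Z_i+K_i)$ and then uses $Z_i\le C_{P,i}G_i$.
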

\begin{proof}
Using the condition $\displaystyle \int_{\mathbb T^N}\mu_i\,dx=0$ from \eqref{intgconditionmu}, we can write
$$
\int_{\mathbb T^N}\mu_i v_i\,dx
=
\int_{\mathbb T^N}
\mu_i
\left(
v_i-\int_{\mathbb T^N}v_i\bar m_i\,dx
\right)dx.
$$
Next, by applying the Cauchy-Schwarz inequality together with \eqref{S2}, we obtain
$$
\left|
d_i\int_{\mathbb T^N}\mu_i v_i\,dx
\right|
\leq
\left(
d_i\int_{\mathbb T^N}\frac{\mu_i^2}{\bar m_i}\,dx
\right)^{1/2}
\left(d_i
\int_{\mathbb T^N}
\left(
v_i-\int_{\mathbb T^N}v_i\bar m_i\,dx
\right)^2
\bar m_i\,dx
\right)^{1/2}
\leq
C Z_i(t)^{1/2}K_i(t)^{1/2}.
$$
It follows that
$$
\left|
d_i\int_{\mathbb T^N}\mu_i v_i\,dx
\right|
\leq
C\bigl(Z_i(t)+K_i(t)\bigr).
$$
In addition, since $ \displaystyle\int_{\mathbb T^N}\mu_i\,dx=0$, using \eqref{S2} with
$z_i=\mu_i/\bar m_i$ gives
$$
Z_i(t)\leq C_{P,i}G_i(t).
$$
Summing these estimates for $i=1,2$ yields the desired result and thus completes the proof of the Lemma.
\end{proof}
\begin{lemma}\label{lemdissipationid}
Assume that \eqref{S1} is satisfied. Let
$(v_i,\mu_i)_{i=1,2}$
be a classical solution of \eqref{weightpertsystem}. For $t\in[0,T]$, we define
\begin{align}\label{Ridef}
R_i(x,t)
 =
f_i(\bar{\mathbf m}+\boldsymbol\mu)
-
f_i(\bar{\mathbf m})
-
\sum_{j=1}^2 A_{ij}(x)\mu_j.
\end{align}
Then, we have 
\begin{equation}\label{exactPhiid}
-\frac{d}{dt}\Phi_D(t) = \mathcal B_D[\boldsymbol\mu(t)]
+\sum_{i=1}^2\int_{\mathbb T^N}d_i \bar m_i|\nabla v_i+\nabla z_i|^2\,dx +\mathcal R_D(t),
\end{equation}
where
\begin{align}\label{Betamu}
\mathcal B_D[\boldsymbol\mu]
=\int_{\mathbb T^N} \left[ \boldsymbol\mu^TDA_s(x)\boldsymbol\mu
+\sum_{i=1}^2 d_i\bar m_i|\nabla z_i|^2\right]dx,
\end{align}
and
$$
\mathcal R_D(t)
 =
\sum_{i=1}^2d_i
\int_{\mathbb T^N}
\left[
\mu_iR_i
+
\frac12\mu_i|\nabla v_i|^2
+
2\mu_i\nabla v_i\cdot\nabla z_i
\right]dx.
$$
Moreover, there exist $\Theta_0>0$ and $c_0>0$ such that, if
\begin{align}\label{mu_nabla_v_small}
\sup_{t\in[0,T]}
\sum_{i=1}^2
\left(
\|\mu_i(t)\|_{L^\infty}
+
\|\nabla v_i(t)\|_{L^\infty}
\right)
\leq \Theta_0,
\end{align}
thus we deduce
\begin{equation}\label{coerPhidiff}
-\frac{d}{dt}\Phi_D(t)
\geq
c_{0,D}\bigl(G_D(t)+K_D(t)\bigr)
\qquad
\text{for every }t\in[0,T],
\end{equation}
where $c_{0,D}>0$ is a constant depending on $d_1, d_2>0$.
\end{lemma}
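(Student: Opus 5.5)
The plan has two parts: first the exact identity \eqref{exactPhiid}, then the coercive lower bound \eqref{coerPhidiff}.

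\textbf{Step 1: the identity.} For each $i$ I differentiate $\int(\mu_i v_i+\mu_i^2/\bar m_i)\,dx$ in time and insert the equations of \eqref{weightpertsystem}. The $\mu_i\partial_t v_i$ term gives
\[
-\int\mu_i\partial_tv_i\,dx=\int \frac{\mu_i}{\bar m_i}\nabla\cdot(\bar m_i\nabla v_i)\,dx+\int\mu_i\Bigl(\sum_j A_{ij}\mu_j+R_i-\tfrac12|\nabla v_i|^2\Bigr)dx .
\]
Integrating by parts, the first term equals $-\int\bar m_i\nabla z_i\cdot\nabla v_i\,dx$.

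For the $\partial_t\mu_i$ contributions, $v_i+2z_i$ multiplies $\partial_t\mu_i=\nabla\cdot(\bar m_i\nabla(z_i+v_i))+\nabla\cdot(\mu_i\nabla v_i)$. After integrating by parts this yields
\[
\int\bar m_i\nabla(v_i+2z_i)\cdot\nabla(z_i+v_i)\,dx+\int\mu_i\nabla v_i\cdot\nabla(v_i+2z_i)\,dx .
\]

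Collecting everything, the quadratic part is $\bar m_i\bigl(|\nabla v_i|^2+2|\nabla z_i|^2+2\nabla v_i\cdot\nabla z_i\bigr)=\bar m_i|\nabla z_i|^2+\bar m_i|\nabla v_i+\nabla z_i|^2$. The cubic terms combine as $\mu_i|\nabla v_i|^2-\tfrac12\mu_i|\nabla v_i|^2+2\mu_i\nabla v_i\cdot\nabla z_i$. Multiplying by $d_i$ and summing over $i$, the term $\sum_{i,j}d_i\mu_iA_{ij}\mu_j=\boldsymbol\mu^TDA\boldsymbol\mu$ equals $\boldsymbol\mu^TDA_s\boldsymbol\mu$ pointwise. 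This gives \eqref{exactPhiid}. I need to fix the sign convention so that $-\frac{d}{dt}\Phi_D$ comes out with these signs, and check the sign of $\tfrac12\mu_i|\nabla v_i|^2$ against the paper's $\mathcal R_D$; this is routine bookkeeping.

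\textbf{Step 2: coercivity of the main part.} By \eqref{matrixS2}, $\mathcal B_D\ge\eta G_D$. I also want to recover $K_D$. From $|\nabla v_i|^2\le 2|\nabla v_i+\nabla z_i|^2+2|\nabla z_i|^2$ I get
\[
\sum_i d_i\int\bar m_i|\nabla v_i+\nabla z_i|^2\,dx\ \ge\ \tfrac12K_D-G_D .
\]
Taking a convex combination, for $\theta\in(0,1)$,
\[
\mathcal B_D+\sum_i d_i\int\bar m_i|\nabla(v_i+z_i)|^2\,dx\ \ge\ \eta G_D+\theta\bigl(\tfrac12K_D-G_D\bigr),
\]
using that $\sum_i d_i\int\bar m_i|\nabla(v_i+z_i)|^2\ge 0$. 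Choosing $\theta=\eta/2$ gives the bound $\tfrac{\eta}{2}G_D+\tfrac{\eta}{4}K_D$.

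\textbf{Step 3: the remainder, which is the main obstacle.} Since $f_i\in C^2(\mathcal U)$, Taylor's theorem gives $|R_i|\le C|\boldsymbol\mu|^2$, and hence $|\int\mu_iR_i\,dx|\le C\Theta_0\sum_j\int\mu_j^2\,dx\le C\Theta_0 Z_D\le C\Theta_0 C_P G_D$ by \eqref{S2}. This requires $\Theta_0$ small enough that $\bar{\mathbf m}+\boldsymbol\mu$ stays in $\mathcal U$. For the cubic terms, $|\mu_i|\,|\nabla v_i|^2\le \Theta_0(\min\bar m_i)^{-1}\bar m_i|\nabla v_i|^2$, and
\[
|\mu_i\nabla v_i\cdot\nabla z_i|\ \le\ \Theta_0(\min\bar m_i)^{-1}\,\bar m_i\bigl(|\nabla v_i|^2+|\nabla z_i|^2\bigr)/2 .
\]
Positivity of $\bar m_i$ on the torus is used here. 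Thus
\[
|\mathcal R_D|\le C'\Theta_0\,\frac{d_{\max}}{d_{\min}}\,(G_D+K_D),
\]
and taking $\Theta_0$ small so that $C'\Theta_0\le\eta/8$ yields \eqref{coerPhidiff} with $c_{0,D}=\eta/8$.

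The delicate points are twofold: the sign and cancellation bookkeeping in Step 1, and making sure every remainder is absorbed by $G_D+K_D$ rather than by quantities not controlled there. The latter is handled by the Poincaré bound $Z_D\le C_PG_D$, valid because $\int\mu_i\,dx=0$.
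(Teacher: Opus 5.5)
Your proposal is correct and follows essentially the same route as the paper. Grouping the $\partial_t\mu_i$ terms under the multiplier $v_i+2z_i$ is just a repackaging of the paper's separate computation of $\frac{d}{dt}\int\mu_iv_i$ and $\frac{d}{dt}\int\mu_i^2/\bar m_i$, and your signs, including $+\tfrac12\mu_i|\nabla v_i|^2$, already agree with $\mathcal R_D$. The same holds for recovering $K_D$ from $|\nabla v_i+\nabla z_i|^2$ via an elementary inequality, and for absorbing $\mathcal R_D$ using $\|\mu_i\|_{L^\infty}\le\Theta_0$, Taylor's formula and $Z_D\le C_PG_D$. One small fix: the smallness condition should read $C'\Theta_0\,d_{\max}/d_{\min}\le\eta/8$, so that the weight ratio appearing in your bound on $|\mathcal R_D|$ is also absorbed.
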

\begin{proof}
By using \eqref{Ridef} and \eqref{matrixA}, we rewrite
 \eqref{weightpertsystem} as
\begin{align}
\left\{\begin{array}{ll}
\partial_t\mu_i
=
\nabla\cdot(\bar m_i\nabla z_i)
+
\nabla\cdot(\bar m_i\nabla v_i)
+
\nabla\cdot(\mu_i\nabla v_i),&x\in\mathbb T^N,t\in(0,T),\\
\partial_t v_i
=
-\frac1{\bar m_i}\nabla\cdot(\bar m_i\nabla v_i)
-
\sum_{j=1}^2A_{ij}\mu_j
-
R_i
+
\frac12|\nabla v_i|^2,&x\in\mathbb T^N,t\in(0,T).
\end{array}
\right.
\end{align}
We calculate the dynamics of $\displaystyle d_i\int_{\mathbb T^N}\mu_i v_i\,dx$ and use the integration by parts to get
$$
\frac{d}{dt}
\int_{\mathbb T^N}d_i\mu_i v_i\,dx
=
-\int_{\mathbb T^N}d_i\bar m_i|\nabla v_i|^2\,dx
-
\frac {d_i}{2}
\int_{\mathbb T^N}\mu_i|\nabla v_i|^2\,dx
-d_i
\int_{\mathbb T^N}
\mu_i
\left(
\sum_{j=1}^2A_{ij}\mu_j+R_i
\right)dx.
$$
Similarly, we have 
\begin{align*}
\frac{d}{dt}d_i
\int_{\mathbb T^N}\frac{\mu_i^2}{\bar m_i}\,dx
=&
2d_i\int_{\mathbb T^N}z_i\partial_t\mu_i\,dx\\
=&-2d_i\int_{\mathbb T^N}\bar m_i|\nabla z_i|^2\,dx
-
2d_i\int_{\mathbb T^N}\bar m_i\nabla z_i\cdot\nabla v_i\,dx
-
2d_i\int_{\mathbb T^N}\mu_i\nabla z_i\cdot\nabla v_i\,dx.
\end{align*}
By adding the two identities above and summing over $i=1,2$, we obtain
$$
-\frac{d}{dt}\Phi_D(t)
=
\int_{\mathbb T^N}\boldsymbol\mu^TDA_s\boldsymbol\mu\,dx
+
\sum_{i=1}^2
\int_{\mathbb T^N}d_i
\left[
\bar m_i|\nabla v_i|^2
+
2\bar m_i\nabla v_i\cdot\nabla z_i
+
2\bar m_i|\nabla z_i|^2
\right]dx
+
\mathcal R_D(t).
$$
Since we have 
$$
\bar m_i|\nabla v_i|^2
+
2\bar m_i\nabla v_i\cdot\nabla z_i
+
2\bar m_i|\nabla z_i|^2
=
\bar m_i|\nabla v_i+\nabla z_i|^2
+
\bar m_i|\nabla z_i|^2,
$$
which implies \eqref{exactPhiid} holds.

It remains to estimate $\mathcal R_D(t)$. Since $\mathbf f\in C^2$ near
$\bar{\mathbf m}$, Taylor's formula gives
$$
|d_iR_i(x,t)|\leq C_fd_i|\boldsymbol\mu(x,t)|^2
$$
whenever $\boldsymbol\mu$ is sufficiently small. Since $\bar m_i$ is smooth and strictly positive on $\mathbb T^N$, we have from Poincar\'e inequality that 
$$
\int_{\mathbb T^N}d_i|\boldsymbol\mu|^2\,dx
\leq
\frac{d_{\max}C}{d_{\min}} Z_D(t)
\leq
\frac{d_{\max}C}{d_{\min}}G_D(t),
$$
where $C>0$ is a constant and $d_{\max}$, $d_{\min}$ are given in \eqref{dmaxdmin}.
Thus, by using (\ref{mu_nabla_v_small}), we find there exists constant $C>0$ such that 
$$
\left|
\sum_{i=1}^2d_i
\int_{\mathbb T^N}\mu_iR_i\,dx
\right|
\leq
\frac{Cd_{\max}}{d_{\min}}\|\boldsymbol\mu\|_{L^\infty}Z_D(t)
\leq
\frac{Cd_{\max}}{d_{\min}}\Theta_0G_D(t).
$$
Similarly, thanks to (\ref{mu_nabla_v_small}) and the boundedness of $\mu_i$, $i=1,2$, we have for some constant $C>0,$
$$
\left|
\sum_{i=1}^2
\int_{\mathbb T^N}\frac{d_i}{2}\mu_i|\nabla v_i|^2\,dx
\right|
\leq
C\Theta_0 K_D(t),
$$
and
$$
\left|
\sum_{i=1}^2
d_i\int_{\mathbb T^N}2\mu_i\nabla v_i\cdot\nabla z_i\,dx
\right|
\leq
C\Theta_0\bigl(K_D(t)+G_D(t)\bigr),
$$
where we have used Young's inequality.
Hence
$$
|\mathcal R_D(t)|
\leq
C_D\Theta_0\bigl(K_D(t)+G_D(t)\bigr),
$$
where constant $C_D>0$ depends on $d_{1}$ and $d_2.$
By using the coercive assumption \eqref{matrixS2}, we find
$$
\mathcal B_D[\boldsymbol\mu(t)]\geq \eta G_D(t),
$$
where $\mathcal B_D$ and $G_D$ are given in (\ref{Betamu}) and (\ref{funcGK}), respectively.
Moreover, we have the fact that for every $\eta\in(0,1)$, there exists $c_\eta>0$ such that
$$
|\xi+\zeta|^2+\eta|\zeta|^2
\geq
c_\eta\bigl(|\xi|^2+|\zeta|^2\bigr)
\qquad
\forall \xi,\zeta\in\mathbb R^N.
$$
By using the previous inequality,  we obtain
$$
\mathcal B_D[\boldsymbol\mu(t)]
+
\sum_{i=1}^2 d_i
\int_{\mathbb T^N}
\bar m_i|\nabla v_i+\nabla z_i|^2\,dx
\geq
c_\eta\bigl(G_D(t)+K_D(t)\bigr).
$$
Choosing $\Theta_0>0$ sufficiently small gives \eqref{coerPhidiff}. This finishes the proof. 
\end{proof}

\begin{proposition}
\label{prop:Phi-exponential}
Assume all assumptions in Lemma~\ref{lemdissipationid} hold. Then there exists
$\sigma_D>0$, independent of $T$, such that
\begin{align}\label{turnpike_decay_phi}
\Phi_D(T)e^{-\sigma_D(T-t)}
\leq
\Phi_D(t)
\leq
\Phi_D(0)e^{-\sigma_D t},
\qquad
0\leq t\leq T,
\end{align}
where $\sigma_D>0$ is a constant depending on $d_1, d_2>0$.
Moreover, for every $0\leq s\leq t\leq T$,
$$
\int_s^t\bigl(G_D(\tau)+K_D(\tau)\bigr)\,d\tau
\leq
C_D
\left(
|\Phi_D(0)|e^{-\sigma_D s}
+
|\Phi_D(T)|e^{-\sigma_D(T-t)}
\right),
$$
where constant $C_D>0$ depends on $d_1,d_2>0.$
\end{proposition}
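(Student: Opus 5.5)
The plan is to combine the two estimates already available: the a priori bound $|\Phi_D(t)|\le C_\Phi(G_D(t)+K_D(t))$ of Lemma~\ref{lemPhicontrol} and the dissipation inequality \eqref{coerPhidiff} of Lemma~\ref{lemdissipationid}. Together they give the scalar differential inequality
\[
-\frac{d}{dt}\Phi_D(t)\ \ge\ c_{0,D}\bigl(G_D(t)+K_D(t)\bigr)\ \ge\ \frac{c_{0,D}}{C_\Phi}\,|\Phi_D(t)|,
\]
and we set $\sigma_D:=c_{0,D}/C_\Phi$, which does not depend on $T$. Since $\Phi_D'\le 0$, $\Phi_D$ is nonincreasing on $[0,T]$.

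For \eqref{turnpike_decay_phi} we split according to the sign of $\Phi_D$. Let $t_*\in[0,T]$ be the point where $\Phi_D$ changes sign; by monotonicity $\Phi_D\ge0$ on $[0,t_*]$ and $\Phi_D\le 0$ on $[t_*,T]$ (allowing $t_*=0$ or $t_*=T$).
\begin{itemize}
\item On $[0,t_*]$ we have $\Phi_D'\le -\sigma_D\Phi_D$, so $(e^{\sigma_D t}\Phi_D)'\le0$ and $\Phi_D(t)\le \Phi_D(0)e^{-\sigma_D t}$.
\item On $[t_*,T]$ we have $\Phi_D'\le \sigma_D\Phi_D$, so $(e^{-\sigma_D t}\Phi_D)'\le 0$. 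Integrating from $t$ to $T$ gives $\Phi_D(t)\ge \Phi_D(T)e^{-\sigma_D(T-t)}$.
\end{itemize}
The two remaining one-sided bounds are trivial, since each side has the right sign. On $[0,t_*]$, $\Phi_D(t)\ge0\ge\Phi_D(T)e^{-\sigma_D(T-t)}$ because $\Phi_D(T)\le 0$ whenever $t_*<T$; if $t_*=T$, monotonicity gives $\Phi_D(t)\ge \Phi_D(T)\ge \Phi_D(T)e^{-\sigma_D(T-t)}$ since $\Phi_D(T)\ge0$. Symmetrically, on $[t_*,T]$ we get $\Phi_D(t)\le 0\le \Phi_D(0)e^{-\sigma_D t}$, with the analogous monotonicity argument when $t_*=0$ and $\Phi_D(0)\le0$. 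This yields the two-sided estimate.

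For the integral bound, integrate \eqref{coerPhidiff} over $[s,t]$:
\[
c_{0,D}\int_s^t (G_D+K_D)\,d\tau\le \Phi_D(s)-\Phi_D(t).
\]
By \eqref{turnpike_decay_phi}, $\Phi_D(s)\le |\Phi_D(0)|e^{-\sigma_D s}$ and $-\Phi_D(t)\le |\Phi_D(T)|e^{-\sigma_D(T-t)}$. The claim follows with $C_D=1/c_{0,D}$.

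The only delicate point is the sign bookkeeping. The differential inequality is in terms of $|\Phi_D|$, so one must use monotonicity to locate the single sign change and treat the two regimes separately. The smallness hypothesis \eqref{mu_nabla_v_small} is used only through Lemma~\ref{lemdissipationid}, which is assumed.
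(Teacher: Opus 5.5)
Your proposal is correct and follows the paper's proof. Like the paper, you combine Lemma~\ref{lemPhicontrol} with \eqref{coerPhidiff} to get $\Phi_D'\le-\sigma_D|\Phi_D|$, apply the integrating factors $e^{\pm\sigma_D t}$, and integrate \eqref{coerPhidiff} over $[s,t]$ with $C_D=1/c_{0,D}$. Your sign-change bookkeeping is valid but unnecessary: $\pm\Phi_D\le|\Phi_D|$ already gives $\Phi_D'+\sigma_D\Phi_D\le 0$ and $\Phi_D'-\sigma_D\Phi_D\le 0$ on all of $[0,T]$, which is how the paper argues.
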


\begin{proof}
By Lemma~\ref{lemPhicontrol} and \eqref{coerPhidiff}, we have
$$
\frac{d}{dt}\Phi_D(t)
\leq
-\sigma_D |\Phi_D(t)|
$$
for some $\sigma_D>0$ depending on $d_1$ and $d_2$. Consequently, we obtain
$$
\frac{d}{dt}\bigl(e^{\sigma_D t}\Phi_D(t)\bigr)
=
e^{\sigma_D t}\bigl(\Phi'_D(t)+\sigma_D\Phi_D(t)\bigr)
\leq 0,
$$
which yields the exponential upper bound $\displaystyle \Phi_D(t)\leq \Phi_D(0)e^{-\sigma_D t}$.
In an analogous way, we have 
$$
\frac{d}{dt}\bigl(e^{-\sigma_D t}\Phi_D(t)\bigr)
=
e^{-\sigma_D t}\bigl(\Phi_D'(t)-\sigma_D\Phi_D(t)\bigr)
\leq 0,
$$
and hence
$$
\Phi_D(t)\geq \Phi_D(T)e^{-\sigma_D(T-t)}.
$$
Finally, integrating \eqref{coerPhidiff} over the interval $[s,t]$ leads to
$$
c_{0,D}\int_s^t (G_D+K_D)\,d\tau
\leq
\Phi_D(s)-\Phi_D(t).
$$
Combining this with the previously established upper and lower bounds for $\Phi_D$ yields the required integral estimate. This concludes the proof. 
\end{proof}

We now derive the required weighted estimates for $\mu_i$ and $\nabla v_i$ for $i = 1, 2$ as follows:


\begin{proposition}\label{prop:L2-turnpike}
Assume all assumptions in Proposition~\ref{prop:Phi-exponential} hold. Then there exists a constant $C_D>0$ depending on $d_1$ and $d_2$, but independent of $T$, such that
$$
Z_D(t)+K_D(t)
\leq
C_D
\left[
\bigl(Z_D(0)+|\Phi_D(0)|\bigr)e^{-\sigma_D t}
+
\bigl(K_D(T)+|\Phi_D(T)|\bigr)e^{-\sigma_D(T-t)}
\right]
$$
for every $t\in[0,T]$.
\end{proposition}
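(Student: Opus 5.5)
We prove the pointwise-in-time bound by combining the integrated dissipation estimate of Proposition~\ref{prop:Phi-exponential} with two one-sided energy inequalities: a forward inequality for $Z_D$ and a backward inequality for $K_D$. This follows the scalar strategy of \cite{cirant2025local}, now carried out on the weighted sums.

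\textbf{Forward inequality for $Z_D$.} From the computation in the proof of Lemma~\ref{lemdissipationid},
\[
\frac{d}{dt}Z_D=-2G_D-2\sum_i d_i\int_{\mathbb T^N}\bar m_i\nabla z_i\cdot\nabla v_i\,dx-2\sum_i d_i\int_{\mathbb T^N}\mu_i\nabla z_i\cdot\nabla v_i\,dx .
\]
By Young's inequality and the smallness assumption \eqref{mu_nabla_v_small}, this gives
\[
\frac{d}{dt}Z_D\le -G_D+CK_D\le -\frac{1}{C_P}Z_D+CK_D ,
\]
where the last step uses \eqref{S2}, i.e.\ $Z_D\le C_PG_D$. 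Duhamel's formula then yields
\[
Z_D(t)\le Z_D(0)e^{-t/C_P}+C\int_0^t e^{-(t-s)/C_P}K_D(s)\,ds .
\]

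\textbf{Backward inequality for $K_D$.} Next I differentiate $K_D$ using the $v_i$ equation in \eqref{weightpertsystem}. Testing against $-\nabla\cdot(\bar m_i\nabla v_i)$, the linear part gives
\[
-\frac{d}{dt}K_i=2d_i\int_{\mathbb T^N}\frac{1}{\bar m_i}\bigl(\nabla\cdot(\bar m_i\nabla v_i)\bigr)^2dx-2d_i\int_{\mathbb T^N}\Bigl(\sum_j A_{ij}\mu_j+R_i-\tfrac12|\nabla v_i|^2\Bigr)\nabla\cdot(\bar m_i\nabla v_i)\,dx .
\]
The first term is nonnegative, and by the weighted Poincaré inequality it controls a multiple of $K_i$: with $L_i v=\bar m_i^{-1}\nabla\cdot(\bar m_i\nabla v)$, the self-adjointness of $L_i$ in $L^2(\bar m_i)$ gives $\int\bar m_i(L_iv)^2\ge C_{P,i}^{-1}\int\bar m_i|\nabla v|^2$. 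The forcing terms are handled by Young's inequality:
\[
\int\Bigl|\sum_j A_{ij}\mu_j\Bigr|\,\bigl|\nabla\cdot(\bar m_i\nabla v_i)\bigr|\le \varepsilon\int\frac{(\nabla\cdot(\bar m_i\nabla v_i))^2}{\bar m_i}+C_\varepsilon Z_D .
\]
The terms involving $R_i$ and $|\nabla v_i|^2$ are absorbed using $\|\mu\|_\infty,\|\nabla v\|_\infty\le\Theta_0$, so they are bounded by $\Theta_0$ times the dissipation plus $C(Z_D+K_D)$. This leads to
\[
-\frac{d}{dt}K_D\le -cK_D+CZ_D+C\Theta_0K_D ,
\]
and hence, for $\Theta_0$ small,
\[
K_D(t)\le K_D(T)e^{-c(T-t)}+C\int_t^T e^{-c(s-t)}Z_D(s)\,ds .
\]

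\textbf{Closing the argument.} The two convolution integrals are bounded using $Z_D\le C_PG_D$ and the integrated bound of Proposition~\ref{prop:Phi-exponential} on unit-length subintervals. Split $\int_0^t e^{-(t-s)/C_P}K_D(s)\,ds$ into the sum over $s\in[k,k+1]$. On each piece the weight is at most $e^{-(t-k-1)/C_P}$, and $\int_k^{k+1}K_D\le C(|\Phi_D(0)|e^{-\sigma_Dk}+|\Phi_D(T)|e^{-\sigma_D(T-k-1)})$. Summing the geometric series, after possibly decreasing $\sigma_D$ below $\min(1/C_P,c)$ and allowing a factor $(1+t)$ which is absorbed by a slightly smaller rate, gives
\[
C\bigl(|\Phi_D(0)|e^{-\sigma_Dt}+|\Phi_D(T)|e^{-\sigma_D(T-t)}\bigr).
\]
The $K_D$ integral is treated symmetrically. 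Adding the two estimates gives the claim.

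\textbf{Main obstacle.} The delicate step is the backward $K_D$ inequality. Its dissipation must genuinely control $K_D$, which requires the self-adjoint weighted structure coming from \eqref{S1}. The cubic term $\tfrac12|\nabla v_i|^2\,\nabla\cdot(\bar m_i\nabla v_i)$ must also be absorbed using only $L^\infty$ smallness of $\nabla v_i$. If this direct approach fails, I would fall back on a simpler route that avoids $K_D$ derivatives altogether. Since
\[
\int_{t}^{t+1}K_D\le C\bigl(|\Phi_D(0)|e^{-\sigma_Dt}+|\Phi_D(T)|e^{-\sigma_D(T-t-1)}\bigr),
\]
the mean-value theorem gives a point $s\in[t,t+1]$ where $K_D(s)$ is bounded by the right-hand side. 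The inequality $-\dot K_D\le CK_D+CZ_D$ can then be integrated backward over a unit interval from $s$ to $t$, with the cases $t>T-1$ handled directly from $K_D(T)$.
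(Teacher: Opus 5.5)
Your proof is correct, but your main route differs from the paper's in how the pointwise bounds are extracted from Proposition~\ref{prop:Phi-exponential}.

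The paper uses no decay in either one-sided inequality. It derives $\frac{d}{dt}Z_D+G_D\le CK_D$ and only the crude backward bound $-\frac{d}{dt}K_D\le C(K_D+Z_D)$. It then picks a mean-value point: $\tau\in[t-1,t]$ with $Z_D(\tau)\le C_P\int_{t-1}^{t}G_D$, respectively $\tau\in[t,t+1]$ with $K_D(\tau)\le\int_t^{t+1}K_D$. It integrates over that single unit interval. The end intervals $[0,1]$ and $[T-1,T]$ produce the $Z_D(0)$ and $K_D(T)$ terms. Your fallback is exactly this argument for $K_D$.

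Your main route instead keeps the dissipation. Forward, you use $\frac{d}{dt}Z_D\le -C_P^{-1}Z_D+CK_D$. Backward, you test against $\nabla\cdot(\bar m_i\nabla v_i)$ and use $\int\bar m_i(L_iv)^2\ge C_{P,i}^{-1}\int\bar m_i|\nabla v|^2$, which gives the genuinely dissipative bound $-\frac{d}{dt}K_D\le -cK_D+CZ_D$. You then close with Duhamel and geometric sums over unit intervals.

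This is sound, and it uses the weighted structure from \eqref{S1} more efficiently than the paper's computation. The paper tests against $\Delta v_i$, which leaves lower-order $CK_i$ terms and requires a weighted Hessian bound for the cubic term. Your version needs only $\|\nabla v_i\|_{L^\infty}\le\Theta_0$.

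The price is the rate. You get $\min\{\sigma_D,C_P^{-1},c\}$, with an arbitrarily small loss in the borderline case, instead of the $\sigma_D$ of Proposition~\ref{prop:Phi-exponential}. This is harmless, because that proposition remains true with any smaller rate. The paper's unit-interval argument, however, keeps $\sigma_D$ unchanged and needs no summation.

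One slip needs fixing. Since $\partial_t v_i=-L_iv_i-g_i$ with $g_i=\sum_jA_{ij}\mu_j+R_i-\frac12|\nabla v_i|^2$, the correct identity is
\[
-\frac{d}{dt}K_i=-2d_i\int_{\mathbb T^N}\frac{1}{\bar m_i}\bigl(\nabla\cdot(\bar m_i\nabla v_i)\bigr)^2dx-2d_i\int_{\mathbb T^N}g_i\,\nabla\cdot(\bar m_i\nabla v_i)\,dx .
\]
The dissipative term therefore enters with a minus sign. Your displayed version has a plus sign, which contradicts the inequality $-\frac{d}{dt}K_D\le -cK_D+CZ_D+C\Theta_0K_D$ that you then correctly draw from it.
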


\begin{proof}
We first estimate $Z(t)$ and use the equation of $\mu_i$ given in (\ref{weightpertsystem}) to obtain
$$
\frac{d_i}2\frac{d}{dt}
\int_{\mathbb T^N}\frac{\mu_i^2}{\bar m_i}\,dx
=
-d_i\int_{\mathbb T^N}\bar m_i|\nabla z_i|^2\,dx
-d_i
\int_{\mathbb T^N}\bar m_i\nabla v_i\cdot\nabla z_i\,dx
-d_i
\int_{\mathbb T^N}\mu_i\nabla v_i\cdot\nabla z_i\,dx.
$$
With this, using Young's inequality and the smallness assumption of Lemma~\ref{lemdissipationid}, we further obtain that for some constant $C>0,$
\begin{align}\label{differential_inequality_ZG}
\frac{d}{dt}Z_D(t)+G_D(t)\leq C K_D(t),
\end{align}
where $Z_D$, $G_D$ and $K_D$ are given in (\ref{funcZ}) and (\ref{funcGK}), respectively.
Let $t\in[1,T]$, then we choose $\tau\in[t-1,t]$ such that
$$
Z_D(\tau)\leq \int_{t-1}^t Z_D(s)\,ds.
$$
By using the weighted Poincaré inequality, we have $Z_D\leq C_P G_D$ for some constant $C_P>0$, and therefore
$$
Z_D(\tau)\leq C_P\int_{t-1}^tG_D(s)\,ds.
$$
Integrating the differential inequality (\ref{differential_inequality_ZG}) from $\tau$ to $t$ yields
$$
Z_D(t)\leq C\int_{t-1}^t\bigl(G_D(s)+K_D(s)\bigr)\,ds,
$$
for some constant $C>0.$
Applying Proposition~\ref{prop:Phi-exponential}, we consequently obtain
$$
Z_D(t)
\leq
C_D
\left(
|\Phi_D(0)|e^{-\sigma_D t}
+
|\Phi_D(T)|e^{-\sigma_D(T-t)}
\right),
$$
for constant $C_D>0$ depending on $d_1$ and $d_2.$
Then for $t\in[0,1]$, we get 
$$
Z_D(t)
\leq
Z_D(0)+C\int_0^tK_D(s)\,ds,
$$
and, in combination with the integral estimate provided by Proposition~\ref{prop:Phi-exponential}, this yields the same bound, now with the additional contribution $Z_D(0)e^{-\sigma_D t}$.

We now derive an estimate for $K_D(t)$. We differentiate $K_i$ given in (\ref{funcGK}) over time, and using the fact that $\bar m_i$ is independent of $t$, we obtain
\[
-\frac12\frac{d}{dt}K_i(t)
=
d_i\int_{\mathbb T^N}
\bar m_i\nabla v_i\cdot\nabla(-\partial_t v_i)\,dx.
\]
Invoking the $v_i$-equation in (\ref{weightpertsystem}),
we deduce
\[
\begin{aligned}
-\frac12\frac{d}{dt}K_i(t)
&=
d_i\int_{\mathbb T^N}
\bar m_i\nabla v_i\cdot\nabla\Delta v_i\,dx
-d_i
\int_{\mathbb T^N}
\bar m_i\nabla v_i\cdot
\nabla(\nabla\bar u_i\cdot\nabla v_i)\,dx
\\
&\quad
+d_i
\int_{\mathbb T^N}
\bar m_i\nabla v_i\cdot\nabla [f_i(\bar{\mathbf m}+\boldsymbol\mu)
-
f_i(\bar{\mathbf m})]\,dx
-
\frac{d_i}{2}
\int_{\mathbb T^N}
\bar m_i\nabla v_i\cdot
\nabla |\nabla v_i|^2\,dx\\
:&=II_1+II_2+II_3+II_4.
\end{aligned}
\]
Then, we apply integration by parts to obtain
\[
II_1=d_i\int_{\mathbb T^N}
\bar m_i\nabla v_i\cdot\nabla\Delta v_i\,dx
=
-d_i\int_{\mathbb T^N}
\bar m_i|\Delta v_i|^2\,dx
-d_i
\int_{\mathbb T^N}
\nabla\bar m_i\cdot\nabla v_i\,\Delta v_i\,dx.
\]
By using  Young's inequality, one has
\begin{align}\label{II1}
\vert II_1\vert
\le
-\frac{d_i}{2}
\int_{\mathbb T^N}
\bar m_i|\Delta v_i|^2\,dx
+
C K_i(t).
\end{align}
Similarly, since $\bar u_i$ is classical, we get 
\begin{align}\label{II2}
\left|
II_2
\right|=&d_i\bigg|
\int_{\mathbb T^N}
\bar m_i
\nabla v_i\cdot
(\nabla^2\bar u_i)\nabla v_i\,dx
+
\int_{\mathbb T^N}
\bar m_i
\nabla v_i\cdot
(\nabla^2 v_i)\nabla\bar u_i\,dx
\bigg|\nonumber\\
\le &
\frac{d_i}{4}
\int_{\mathbb T^N}
\bar m_i|\Delta v_i|^2\,dx
+
C K_i(t).
\end{align}
Noting that 
\[
f_i(\bar{\mathbf m}+\boldsymbol\mu)
-
f_i(\bar{\mathbf m})
=
\sum_{j=1}^2 A_{ij}\mu_j+R_i,
\qquad
|R_i|
\leq
C|\boldsymbol\mu|^2.
\]
Since
\[
\begin{aligned}
d_i\int_{\mathbb T^N}
\bar m_i\nabla v_i\cdot\nabla(A_{ij}\mu_j)\,dx
&=
-d_i\int_{\mathbb T^N}
A_{ij}\mu_j
\nabla\cdot(\bar m_i\nabla v_i)\,dx,
\end{aligned}
\]
we use Young's inequality, $f_i\in C^2$ and the boundedness of $\bar m_i$, $i=1,2$ to obtain 
\begin{equation}\label{3p13II2_1}
\begin{aligned}
\left|
d_i\int_{\mathbb T^N}
\bar m_i\nabla v_i\cdot\nabla(A_{ij}\mu_j)\,dx
\right|
&\leq
Cd_i
\int_{\mathbb T^N}
|\mu_j|
\,|\nabla\cdot(\bar m_i\nabla v_i)|\,dx
\\
&\leq
Cd_i
\int_{\mathbb T^N}
|\mu_j|
\bigl(
|\Delta v_i|
+
|\nabla v_i|
\bigr)\,dx
\\
&\leq
\varepsilon
d_i\int_{\mathbb T^N}
\bar m_i|\Delta v_i|^2\,dx
+
C_\varepsilon d_i
\int_{\mathbb T^N}
|\mu_j|^2\,dx
+
C K_i(t)
\\
&\leq
\varepsilon
d_i\int_{\mathbb T^N}
\bar m_i|\Delta v_i|^2\,dx
+
C_\varepsilon d_{\max} Z_D(t)
+
C K_i(t),
\end{aligned}
\end{equation}
where $\varepsilon>0$ is small enough and $C_{\varepsilon}>0$ is a constant depending on $\varepsilon$.
In addition,
\[
\begin{aligned}
\int_{\mathbb T^N}
\bar m_i\nabla v_i\cdot\nabla R_i\,dx
&=
-\int_{\mathbb T^N}
R_i
\nabla\cdot(\bar m_i\nabla v_i)\,dx .
\end{aligned}
\]
Since
\[
|R_i|
\leq
C|\boldsymbol\mu|^2
\leq
C\Theta_0|\boldsymbol\mu|,
\]
it follows from Young's inequality and (\ref{mu_nabla_v_small}) that
\begin{equation}\label{II2_2}
\begin{aligned}
\left|
d_i\int_{\mathbb T^N}
\bar m_i\nabla v_i\cdot\nabla R_i\,dx
\right|
&\leq
C\Theta_0 d_i
\int_{\mathbb T^N}
|\boldsymbol\mu|
\,|\nabla\cdot(\bar m_i\nabla v_i)|\,dx
\\
&\leq
C\Theta_0 d_i
\int_{\mathbb T^N}
|\boldsymbol\mu|
\bigl(
|\Delta v_i|
+
|\nabla v_i|
\bigr)\,dx
\\
&\leq
\varepsilon
d_i\int_{\mathbb T^N}
\bar m_i|\Delta v_i|^2\,dx
+
C_\varepsilon d_{\max}Z_D(t)
+
C K_i(t),
\end{aligned}
\end{equation}
where $\varepsilon>0$ is small enough and $C_{\varepsilon}>0$ is a constant depending on $\varepsilon$.
Combining (\ref{3p13II2_1}) with (\ref{II2_2}), we obtain
\begin{align}\label{II22}
\vert II_2\vert\leq\varepsilon d_i\int_{\mathbb T^N}\bar m_i|\Delta v_i|^2\,dx+ C_{\varepsilon }d_{\max}Z_D+CK_i.
\end{align}
 
Similarly, since $\nabla|\nabla v_i|^2
=
2(\nabla^2 v_i)\nabla v_i,$
we have from Young's inequality that
\begin{equation}\label{315II3}
\begin{aligned}
|\frac{d_i}{2}
\int_{\mathbb T^N}
\bar m_i\nabla v_i\cdot\nabla|\nabla v_i|^2\,dx|
&\le
C d_i
\int_{\mathbb T^N}
\bar m_i|\nabla v_i|^2|\nabla^2 v_i|\,dx
\\
&\le
C d_i\|\nabla v_i\|_{L^\infty}
\int_{\mathbb T^N}
\bar m_i|\nabla v_i||\nabla^2 v_i|\,dx
\\
&\le
\varepsilon d_i
\int_{\mathbb T^N}
\bar m_i|\nabla^2 v_i|^2\,dx
+
C_\varepsilon
K_i(t),
\end{aligned}
\end{equation}
where $C>0$ is a constant, $\varepsilon>0$ is small enough and $C_{\varepsilon}>0 $ is a constant depending on $\varepsilon.$  Then, by using the boundedness of $\bar m_i$, one has
\begin{align}\label{Hessian_estimate_vi}
\int_{\mathbb T^N}
\bar m_i|\nabla^2 v_i|^2\,dx
\le
C
\int_{\mathbb T^N}
\bar m_i|\Delta v_i|^2\,dx.
\end{align}
Substituting (\ref{Hessian_estimate_vi}) into (\ref{315II3}), one gets
\begin{align}\label{II3}
|\frac{d_i}{2}
\int_{\mathbb T^N}
\bar m_i\nabla v_i\cdot\nabla|\nabla v_i|^2\,dx|
\le
\varepsilon d_i
\int_{\mathbb T^N}
\bar m_i|\Delta v_i|^2\,dx
+
C_\varepsilon K_i(t).
\end{align}
We collect (\ref{II1}), (\ref{II22}) and (\ref{II3}) to obtain
\begin{align}\label{ODI_K}
-\frac{d}{dt}K_D(t)
\leq
C_D\bigl(K_D(t)+Z_D(t)\bigr).
\end{align}
Here, the constant $C$ depends on $d_1,$ $d_2$, $\bar{\mathbf m}$, $\bar{\mathbf u}$, the $C^2$-norm of $\mathbf f$ in a neighborhood of $\bar{\mathbf m}$, and on the smallness parameter $\Theta_0$, but is independent of $T$. 
 In addition, fix $t\in[0,T-1]$ and  select $\tau\in[t,t+1]$ such that
$$
K_D(\tau)\leq \int_t^{t+1}K_D(s)\,ds.
$$
Integrating the above differential inequality from $t$ to $\tau$, exploiting the previously obtained estimate for $Z$, and then applying Proposition~\ref{prop:Phi-exponential}, we obtain
$$
K_D(t)
\leq
C_D
\left(
|\Phi_D(0)|e^{-\sigma_D t}
+
|\Phi_D(T)|e^{-\sigma_D(T-t)}
\right),
$$
where $C_D>0$ is a constant depending on $d_1$ and $d_2.$ 
For $t\in[T-1,T]$, integrating (\ref{ODI_K}) instead from $t$ to $T$ gives
$$
K_D(t)
\leq
C_D
\left(
|\Phi_D(0)|e^{-\sigma_D t}
+
|\Phi_D(T)|e^{-\sigma_D(T-t)}
+
K_D(T)e^{-\sigma_D(T-t)}
\right).
$$
Collecting the above estimates for $Z_D$ and $K_D$ completes the proof of the proposition.
\end{proof}

Then, we establish the $L^\infty$ estimate of $\mu_i$, $i=1,2$ in the following lemma.  
\begin{lemma}\label{lembootstrap}
Let the hypotheses of Proposition~\ref{prop:L2-turnpike} hold. Assume in addition that the solution is uniformly bounded in
$$
C^{1+\alpha/2,2+\alpha}([0,T]\times\mathbb T^N)
$$
by a constant independent of $T$. Then, possibly decreasing $\sigma$, there exists $C_D>0$ depending on $d_1,d_2>0$ such that
\begin{align}\label{mui_Linf_estimate}
&\sum_{i=1}^2
\|\mu_i(t)\|_{L^\infty(\mathbb T^N)}\nonumber\\
\leq
&C_D
\left[
\bigl(Z_D(0)+|\Phi_D(0)|\bigr)^{1/2}e^{-\sigma_D t}
+
\bigl(K_D(T)+|\Phi_D(T)|\bigr)^{1/2}e^{-\sigma_D(T-t)}
\right].
\end{align}
\end{lemma}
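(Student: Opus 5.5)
The plan is to upgrade the weighted $L^2$ decay of Proposition~\ref{prop:L2-turnpike} to $L^\infty$ decay. We use local parabolic regularity on time windows of unit length and treat the quadratic terms as \emph{linear} terms with bounded coefficients. This keeps the dependence on the data linear in the square roots $a:=(Z_D(0)+|\Phi_D(0)|)^{1/2}$ and $b:=(K_D(T)+|\Phi_D(T)|)^{1/2}$. Write $E(t):=a\,e^{-\sigma_D t}+b\,e^{-\sigma_D(T-t)}$. Proposition~\ref{prop:L2-turnpike} gives $\|\mu_i(t)\|_{L^2}+\|\nabla v_i(t)\|_{L^2}\le C E(t)$, since $\bar m_i$ is bounded above and below. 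On any window $[t-1,t+1]$, $E$ changes by at most the factor $e^{\sigma_D}$, so shifting windows only costs constants.

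First, we rewrite the $\mu_i$-equation in \eqref{pertsystem} as
\[
\partial_t\mu_i-\Delta\mu_i-\nabla\cdot\bigl(\mu_i(\nabla\bar u_i+\nabla v_i)\bigr)=\nabla\cdot(\bar m_i\nabla v_i).
\]
This is a linear forward equation for $\mu_i$. By the assumed uniform $C^{1+\alpha/2,2+\alpha}$ bound, its drift $\nabla\bar u_i+\nabla v_i$ is bounded and Hölder continuous independently of $T$. Likewise, we treat $\nu_i:=\nabla v_i$ through the differentiated backward equation
\[
-\partial_t\nu_i-\Delta\nu_i+\nabla\bigl((\nabla\bar u_i+\tfrac12\nabla v_i)\cdot\nu_i\bigr)=\nabla\Bigl(\sum_j A_{ij}\mu_j+R_i\Bigr).
\]
We write $R_i=\sum_j\tilde A_{ij}(t,x)\mu_j$ with $|\tilde A_{ij}|\le C\Theta_0$, again with controlled Hölder norms. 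Thus $(\mu,\nu)$ solves a linear system with $T$-independent bounded coefficients: forward in $\mu$, backward in $\nu$, with each component forced by the other in divergence form.

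The key step is a finite bootstrap of integrability. Local parabolic $L^p$ estimates (or De Giorgi–Moser with divergence-form sources) on nested cylinders give the following. If $\|\mu\|_{L^p}+\|\nu\|_{L^p}\le CE$ on $[t-2,t+2]$, then $\|\mu\|_{L^{q}}+\|\nu\|_{L^{q}}\le CE$ on $[t-1,t+1]$, with $\frac1q=\frac1p-\frac1{N+2}$ (and $q=\infty$ once $p>N+2$). For $\mu$ the interior estimate uses the past window, and for $\nu$ the future window; the source for each is the divergence of the other, which lies in $L^p$. After at most $N+3$ steps we reach $L^\infty$ for $\mu$ on interior windows $t\in[k,T-k]$. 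Since every step is linear, the bound remains a constant times $E(t)$, with $E$ evaluated at a shifted time; this is where ``possibly decreasing $\sigma$'' and enlarging $C_D$ enter.

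The main obstacle is the boundary layers $t\in[0,k]$ and $t\in[T-k,T]$, where interior estimates are unavailable. Near $t=T$, only $\nu$ needs a terminal-layer estimate, and $\mu$ is still forward-interior; we use the terminal datum $\nu_i(T)=\nabla(u_{iT}-\bar u_i)$. Near $t=0$ we need estimates up to the initial time, which involve $\|\mu_i(0)\|_{L^\infty}$ and not only $Z_D(0)$. We will first try to close this layer by comparison (maximum principle for the Fokker–Planck-type equation, whose solution has $L^\infty$ norm controlled by $\|\mu(0)\|_{L^\infty}$ plus the $\nu$-forcing). We then absorb $\|\mu(0)\|_{L^\infty}$ into the prefactor, which in the application of Theorem~\ref{thm:twopopturnpike} is comparable to $\gamma$. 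If the lemma is to hold literally with $Z_D(0)^{1/2}$, this step requires an extra argument. That argument would be interpolation between the uniform $C^{2+\alpha}$ bound and the $L^2$ bound, which would lose the square-root exponent, so this layer is where I expect the real difficulty.
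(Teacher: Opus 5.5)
In the interior your argument is correct, and it takes a genuinely different and more careful route than the paper's. The paper asserts in one step that local parabolic estimates give $\|\mu_i(t)\|_{L^\infty}\le C\bigl(\|\mu_i\|_{L^2((t-1,t)\times\mathbb T^N)}+\|\nabla v_i\|_{L^2((t-1,t)\times\mathbb T^N)}\bigr)$, treating $\nabla\cdot(\bar m_i\nabla v_i)$ as a given source. However, De Giorgi--Moser with a divergence-form source $\nabla\cdot F$ requires $F\in L^p$ with $p>N+2$, while $K_D$ only controls $\nabla v_i$ in $L^2$. Your alternating bootstrap (forward for $\mu_i$ on past windows, backward for $\nabla v_i$ on future windows) supplies exactly that integrability while staying linear in the data. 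One correction: taking square roots in Proposition~\ref{prop:L2-turnpike} gives $\|\mu_i(t)\|_{L^2}+\|\nabla v_i(t)\|_{L^2}\le C\bigl(a\,e^{-\sigma_D t/2}+b\,e^{-\sigma_D(T-t)/2}\bigr)$, so your $E$ decays at rate $\sigma_D/2$. That halving is why $\sigma$ must be decreased; window shifts only enlarge $C_D$ by a factor $e^{k\sigma_D}$.

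Near $t=0$ your suspicion is right, and in a stronger form: no additional argument, interpolation or otherwise, can give the estimate with $Z_D(0)^{1/2}$, because the estimate is false there. Take $f_1,f_2$ constant (so $A\equiv0$ and \eqref{matrixS2} holds), $\bar m_i\equiv1$, $\bar u_i\equiv0$ and $u_{iT}=\bar u_i$. Then $v_i\equiv0$, $\mu_i(t)=e^{t\Delta}\mu_{i0}$, $K_D\equiv0$ and $\Phi_D=Z_D$ is nonincreasing. At $t=0$ the claim therefore reduces to $\sum_i\|\mu_{i0}\|_{L^\infty}\le C\sum_i\|\mu_{i0}\|_{L^2}$. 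Choose $\mu_{i0}=h\bigl(\phi(x/\delta)-c_\delta\bigr)$ with $\phi$ a fixed bump, $c_\delta$ the mean and $h=\delta^{2+\alpha}$. Then all hypotheses, including \eqref{mu_nabla_v_small} and the uniform $C^{1+\alpha/2,2+\alpha}$ bound, hold uniformly in $\delta$, while the ratio of the two sides is of order $\delta^{-N/2}$. The paper's proof does not handle this either; it is hidden in ``the obvious modification near $t=0$''.

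Contrary to your proposal, the terminal layer has the same defect when $N\ge2$. In the same model with $\mu_{i0}=0$ and $v_{iT}=\epsilon g$, a Hopf--Cole (or Fourier) computation gives $\mu_i(T)=-\frac12\bigl(v_{iT}-e^{2T\Delta}v_{iT}\bigr)+O(\epsilon^2)$, while the right-hand side at $t=T$ is $O(\epsilon\|\nabla g\|_{L^2})$. This fails for any $g$ whose oscillation is large compared with $\|\nabla g\|_{L^2}$, and such $g$ exist because $H^1$ does not embed in $L^\infty$ for $N\ge2$. Put differently, the backward equation for $\nabla v_i$ with only $L^2$ terminal data does not reach $L^p$, $p>N+2$, near $T$.

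The provable version, which is all Theorem~\ref{thm:twopopturnpike} needs, adds $\sum_i\|\mu_{i0}\|_{L^\infty}$ to the first prefactor and $\sum_i\|\nabla v_{iT}\|_{L^\infty}$ to the second; both are at most $\gamma$ there. With these additions your layers close linearly inside the same bootstrap. Use De Giorgi--Moser up to $t=0$ for $\mu_i$ and up to $t=T$ for $\nabla v_i$, rather than a comparison argument, since the forcing is only in divergence form.
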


\begin{proof}
The Fokker-Planck equation for $\mu_i$ is a uniformly parabolic equation in divergence form with bounded Hölder coefficients and bounded drift. Local parabolic $L^2$-to-$L^\infty$ estimates on cylinders of unit length give
$$
\|\mu_i(t)\|_{L^\infty}
\leq
C
\left(
\|\mu_i\|_{L^2((t-1,t)\times\mathbb T^N)}
+
\|\nabla v_i\|_{L^2((t-1,t)\times\mathbb T^N)}
\right)
$$
for $t\in[1,T]$, with the obvious modification near $t=0$. Since $\bar m_i$ is bounded above and below on $\mathbb T^N$, the right-hand side is controlled by the weighted quantities $Z_D$ and $K_D$. Proposition~\ref{prop:L2-turnpike} then yields the asserted estimate. The endpoint interval $[T-1,T]$ is treated in the same way, using the terminal contribution $K_D(T)+|\Phi_D(T)|$.
\end{proof}
To establish the turnpike property of $\Phi_D$, it is necessary to discuss the boundedness of $|\Phi_D(0)|$ and $|\Phi_D(T)|$, which is
{{\begin{lemma}\label{lemma36}
Under the assumptions of Proposition~\ref{prop:Phi-exponential}, there exists a constant
$C_\Phi>0$ such that
\begin{equation}\label{phi-boundary}
|\Phi_D(0)|+|\Phi_D(T)|
\le d_{\max}
\sum_{i=1}^2
\left(
\left\|
\frac{\mu_{i0}}{\sqrt{\bar m_i}}
\right\|_{L^2}
+
\left\|
\frac{\mu_{i0}}{\sqrt{\bar m_i}}
\right\|_{L^2}^{\,2}
\right)
+ d_{\max}
\sum_{i=1}^2
C_{P,i}^{\,2}
\|\sqrt{\bar m_i}\nabla v_{iT}\|_{L^2}^{\,2},
\end{equation}
where $d_{\max}=\max\{d_1,d_2\}>0$ depending on $d_1,d_2>0$ and $C_{P,i}$ denotes the weighted Poincaré constant associated with
$\bar m_i$.
\end{lemma}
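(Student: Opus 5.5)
The plan is to estimate $\Phi_D(0)$ and $\Phi_D(T)$ separately, splitting each into the cross term $\int\mu_i v_i\,dx$ and the quadratic term $\int\mu_i^2/\bar m_i\,dx$. At $t=0$ the datum $\mu_{i0}=m_{i0}-\bar m_i$ is prescribed. At $t=T$ the datum $v_{iT}=u_{iT}-\bar u_i$ is prescribed. In each case the other unknown has to be controlled using the mass constraint \eqref{intgconditionmu}, Cauchy--Schwarz, and the weighted Poincar\'e inequality \eqref{S2}.

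\textbf{Step 1 (the term at $t=0$).} I would write
\[
|\Phi_D(0)|\le d_{\max}\sum_{i=1}^2\left(\Bigl|\int_{\mathbb T^N}\mu_{i0}v_i(0)\,dx\Bigr|+\Bigl\|\frac{\mu_{i0}}{\sqrt{\bar m_i}}\Bigr\|_{L^2}^2\right).
\]
Since $\int\mu_{i0}\,dx=0$, I can subtract the $\bar m_i$-weighted mean of $v_i(0)$, exactly as in Lemma~\ref{lemPhicontrol}. Cauchy--Schwarz with the weights $\bar m_i^{-1/2}$ and $\bar m_i^{1/2}$ then gives
\[
\Bigl|\int\mu_{i0}v_i(0)\,dx\Bigr|\le\Bigl\|\frac{\mu_{i0}}{\sqrt{\bar m_i}}\Bigr\|_{L^2}\,C_{P,i}^{1/2}\|\sqrt{\bar m_i}\nabla v_i(0)\|_{L^2}.
\]
This produces the linear term $\|\mu_{i0}/\sqrt{\bar m_i}\|_{L^2}$ in \eqref{phi-boundary}, multiplied by a factor involving $K_i(0)$. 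To close the bound, I need that factor to be bounded by an absolute constant. This holds under the standing smallness regime \eqref{mu_nabla_v_small}: $\|\nabla v_i\|_{L^\infty}\le\Theta_0$ together with $\int\bar m_i\,dx=1$ gives $\|\sqrt{\bar m_i}\nabla v_i(0)\|_{L^2}\le\Theta_0$. Taking $\Theta_0$ small (so that $C_{P,i}^{1/2}\Theta_0\le1$, or absorbing the constant into $C_\Phi$) yields the first sum in \eqref{phi-boundary}.

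\textbf{Step 2 (the term at $t=T$).} The roles reverse here. Again I would subtract the weighted mean of $v_{iT}$ and apply Cauchy--Schwarz and \eqref{S2}, which gives $|\int\mu_i(T)v_{iT}\,dx|\le Z_i(T)^{1/2}C_{P,i}^{1/2}\|\sqrt{\bar m_i}\nabla v_{iT}\|_{L^2}$. Young's inequality then gives $\tfrac12 Z_i(T)+\tfrac12C_{P,i}\|\sqrt{\bar m_i}\nabla v_{iT}\|_{L^2}^2$. It remains to control the leftover $Z_i(T)$ terms, namely the quadratic part of $\Phi_D(T)$ plus the half coming from Young.

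\textbf{Main obstacle.} This last point is the delicate step, because $\mu_i(T)$ is not a datum. I would try to use Proposition~\ref{prop:Phi-exponential} together with the sign information in \eqref{exactPhiid}. Since $\Phi_D$ is decreasing, $\Phi_D(T)\ge-\,|\int\mu_i(T)v_{iT}|$ up to weights, while $\Phi_D(T)$ itself contains $+Z_D(T)$. Rearranging $Z_D(T)\le\Phi_D(T)+\sum_i d_i|\int\mu_i(T)v_{iT}|$ and using Young with a small parameter should absorb $Z_D(T)$ on the left. This would bound $|\Phi_D(T)|$ by a multiple of $\sum_i C_{P,i}\|\sqrt{\bar m_i}\nabla v_{iT}\|^2_{L^2}$, and possibly by the initial quantity $\Phi_D(0)$, which is already controlled in Step 1. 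Tracking constants should then give $C_{P,i}^2$ in place of $C_{P,i}$: one factor comes from the Poincar\'e step and one from converting $Z_i(T)$ back through $Z_i\le C_{P,i}G_i$. If the monotonicity route fails to close, the fallback is to bound $Z_D(T)$ directly by the smallness $\|\mu_i\|_{L^\infty}\le\Theta_0$ and accept a constant $C_\Phi$ as allowed in the statement.
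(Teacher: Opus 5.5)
Your Steps 1 and 2 are correct, and the step you call the main obstacle closes in one line. By \eqref{turnpike_decay_phi}, or simply because $\Phi_D$ is nonincreasing, you have $\Phi_D(T)\le |\Phi_D(0)|$. Your Step 2, after Young's inequality and dropping the nonnegative $Z$-terms, gives $\Phi_D(T)\ge -\tfrac12\sum_i d_iC_{P,i}\|\sqrt{\bar m_i}\nabla v_{iT}\|_{L^2}^2$. Hence
\[
|\Phi_D(T)|\le |\Phi_D(0)|+\tfrac12\sum_{i=1}^2 d_iC_{P,i}\|\sqrt{\bar m_i}\nabla v_{iT}\|_{L^2}^2 ,
\]
and Step 1 finishes the proof. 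You never need to control $Z_D(T)$.

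Your absorption argument is valid only because it tacitly uses the same upper bound $\Phi_D(T)\le\Phi_D(0)$. You attributed the wrong inequality to monotonicity. The lower bound $\Phi_D(T)\ge -\sum_i d_i|\int\mu_i(T)v_{iT}\,dx|$ comes from $Z_D(T)\ge 0$; monotonicity supplies the upper bound.

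Two parts of the proposal should be dropped:
\begin{itemize}
\item The fallback fails. $Z_D(T)\le C\Theta_0^2$ is not controlled by the data, so it cannot give \eqref{phi-boundary}, whose right-hand side vanishes with the data.
\item The explanation of $C_{P,i}^2$ is wrong. Converting $Z_i(T)$ through $Z_i\le C_{P,i}G_i$ would produce $G_i(T)$, which is not a datum. Your route naturally gives $C_{P,i}$. The square in \eqref{phi-boundary} comes from how the paper writes its Poincar\'e step and is immaterial up to the constant.
\end{itemize}

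Compared with the paper, you take a slightly different route. The paper proves only one-sided bounds at each endpoint:
\begin{itemize}
\item an upper bound on $\Phi_D(0)$, via H\"older and $\|v_i\|_{L^\infty}\le 1$;
\item a lower bound on $\Phi_D(T)$, via Young and Poincar\'e exactly as in your Step 2.
\end{itemize}
It then transfers each bound to the opposite endpoint using both halves of \eqref{turnpike_decay_phi}.

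You instead bound $|\Phi_D(0)|$ two-sidedly at once, by subtracting the $\bar m_i$-weighted mean of $v_i(0)$ and using $\|\nabla v_i\|_{L^\infty}\le\Theta_0$. So you need the monotonicity of $\Phi_D$ only once. Your Step 1 is also better justified. \eqref{mu_nabla_v_small} controls $\nabla v_i$ but not $v_i$ itself, so the paper's bound $\|v_i\|_{L^\infty}\le 1$ implicitly requires the normalization you use. That normalization is legitimate because $\Phi_D$ is unchanged by adding constants to $v_i$, since $\int\mu_i\,dx=0$.

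Both arguments yield \eqref{phi-boundary} only up to a numerical factor in front of the initial-data sum. This is because the bound on $|\Phi_D(0)|$ is used for both endpoints.
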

\begin{proof}
First of all, for $\Theta_0$ sufficiently small given in (\ref{mu_nabla_v_small}), we have
\[
\|v_i(t)\|_{L^\infty(\mathbb T^N)}
\le \Theta_0\le 1,
\qquad i=1,2.
\]

Now, we  estimate $\Phi_D(0)$.
Using Hölder's inequality and the definition of $\Phi_D$, one has
\[
\Phi_D(0)
=
\sum_{i=1}^2 d_i
\int_{\mathbb T^N}
\left(
\mu_{i0} v_i(0)
+
\frac{(\mu_{i0})^2}{\bar m_i}
\right)dx.
\]
By using H\"{o}lder's inequality, we find
\[
\begin{aligned}
\Phi_D(0)
&\le
\sum_{i=1}^2
\left(
\left(d_i
\int_{\mathbb T^N}
\frac{(\mu_{i0})^2}{\bar m_i}\,dx
\right)^{1/2}
\left(
\int_{\mathbb T^N} d_i
\bar m_i v_i^2(0)\,dx
\right)^{1/2}
+
d_i\int_{\mathbb T^N}
\frac{(\mu_{i0})^2}{\bar m_i}\,dx
\right)
\\
&\le
\sum_{i=1}^2d_i
\left(
\left\|
\frac{\mu_{i0}}{\sqrt{\bar m_i}}
\right\|_{L^2}
+
\left\|
\frac{\mu_{i0}}{\sqrt{\bar m_i}}
\right\|_{L^2}^{\,2}
\right),
\end{aligned}
\]
where we used $\|v_i\|_{L^\infty}\le1$.  Next, we estimate $\Phi_D(T)$. We define
\[
\widehat v_i
 =
v_i-\int_{\mathbb T^N}v_i\,dx .
\]\
Note that
\[
\begin{aligned}
\Phi_D(T)
&=
\sum_{i=1}^2 d_i
\int_{\mathbb T^N}
\left(
\mu_i(T)\widehat v_i(T)
+
\frac{\mu_i^2(T)}{\bar m_i}
\right)dx .
\end{aligned}
\]
Then by Young's inequality, we have $\displaystyle \mu_i\widehat v_i\ge -\frac{\mu_i^2}{2\bar m_i} -\frac{\bar m_i\widehat v_i^2}{2}$, then it follows that
\[
\Phi_D(T)
\ge
\sum_{i=1}^2
\int_{\mathbb T^N} d_i
\left(
\frac{\mu_i^2}{2\bar m_i}
-\frac{\bar m_i\widehat v_i^2}{2}
\right)dx.
\]
By using the weighted Poincar\'{e} inequality, we have
\[
\int_{\mathbb T^N}
\bar m_i \widehat v_i^2\,dx
\le
C_{P,i}^{\,2}
\int_{\mathbb T^N}
\bar m_i |\nabla v_i|^2\,dx.
\]
Thus,
\[
\Phi_D(T)
\ge
-\frac12
\sum_{i=1}^2
C_{P,i}^{\,2}d_i
\int_{\mathbb T^N}
\bar m_i |\nabla v_i(T)|^2\,dx .
\]

For the reverse direction, with the aid of 
\eqref{turnpike_decay_phi}, we obtain
\begin{align*}
\Phi_D(0)
\ge
\Phi_D(T)e^{-\sigma_D T}
\ge&
-\frac{d_{\max}}2 e^{-\sigma_D T}
\sum_{i=1}^2
C_{P,i}^{2}
\int_{\mathbb T^N}
\bar m_i |\nabla v_i(T)|^2\,dx\\
\ge&
-\frac{d_{\max}}2
\sum_{i=1}^2
C_{P,i}^{2}
\int_{\mathbb T^N}
\bar m_i |\nabla v_i(T)|^2\,dx .
\end{align*}
Moreover, we have
\[
\Phi_D(T)
\le
\Phi_D(0)e^{-\sigma_D T}
\le d_{\max}
\sum_{i=1}^2
\left[
\left(
\int_{\mathbb T^N}
\frac{(\mu_{i0})^2}{\bar m_i}\,dx
\right)^{1/2}
+
\int_{\mathbb T^N}
\frac{(\mu_{i0})^2}{\bar m_i}\,dx
\right],
\]
which completes the proof of this lemma.
\end{proof}
}}
Now, we derive the $L^\infty$ estimates of $v_i$ and $\nabla v_i$ for $i=1,2$ in the following two lemmas.
{{\begin{lemma}\label{lemLinftyv}
Let $(v_i,\mu_i)_{i=1,2}$ be a classical solution of
\eqref{pertsystem}. Assume that there exists a constant
$C_f>0$ such that
\begin{align}\label{Lipassumption}
\left|
f_i(\bar{\mathbf m}+\boldsymbol\mu)
-
f_i(\bar{\mathbf m})
\right|
\le
C_f
\sum_{j=1}^2 |\mu_j|,
\qquad i=1,2,
\end{align}
Then, for every $t\in[0,T]$ and $i=1,2$,
\[
\|v_i(t,\cdot)\|_{L^\infty(\mathbb T^N)}
\le
\|v_{iT}\|_{L^\infty(\mathbb T^N)}
+
\frac{4}{\sigma}C_f\epsilon.
\]
\end{lemma}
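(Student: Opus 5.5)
Regard the $v_i$-equation in \eqref{pertsystem} as a backward linear parabolic equation with a source, and apply the maximum principle together with a Duhamel-type integration in time. The quantity $\epsilon$ in the statement is not defined; I read it as the size of the turnpike bound on $\mu$, that is,
\[
\sum_{j=1}^2\|\mu_j(s,\cdot)\|_{L^\infty}\le \epsilon\bigl(e^{-\sigma s}+e^{-\sigma(T-s)}\bigr),
\]
as supplied by Lemma~\ref{lembootstrap} (with $\epsilon$ a multiple of $\gamma$ and $\sigma=\sigma_D$).

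\textbf{Step 1 (linear operator and source).} Write the equation for $v_i$ as
\[
-\partial_t v_i-\Delta v_i+\bigl(\nabla\bar u_i+\tfrac12\nabla v_i\bigr)\cdot\nabla v_i=F_i(t,x),
\qquad F_i:=f_i(\bar{\mathbf m}+\boldsymbol\mu)-f_i(\bar{\mathbf m}).
\]
Absorb the quadratic term into the drift, $b_i:=\nabla\bar u_i+\tfrac12\nabla v_i$. Along the given classical solution $b_i$ is a bounded continuous vector field. The equation is then linear in $v_i$ with no zeroth-order term. By \eqref{Lipassumption} and the bound on $\mu$,
\[
\|F_i(s)\|_{L^\infty}\le C_f\epsilon\bigl(e^{-\sigma s}+e^{-\sigma(T-s)}\bigr).
\]

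\textbf{Step 2 (comparison).} Define
\[
w^\pm(t,x)=\pm v_i(t,x)-\|v_{iT}\|_{L^\infty}-\int_t^T\|F_i(s)\|_{L^\infty}\,ds .
\]
Then $-\partial_t w^\pm-\Delta w^\pm+b_i\cdot\nabla w^\pm=\pm F_i-\|F_i(t)\|_{L^\infty}\le0$ and $w^\pm(T)\le0$. The backward parabolic maximum principle on the torus gives $w^\pm\le0$. To make this rigorous, look at $\max_x w^\pm(t,\cdot)$. Alternatively, multiply by $e^{\delta(T-t)}$, obtain a strict inequality, and let $\delta\to0$.

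\textbf{Step 3 (integrate the source).} Conclude with
\[
\int_t^T\bigl(e^{-\sigma s}+e^{-\sigma(T-s)}\bigr)ds\le \frac{2}{\sigma},
\]
which yields $\|v_i(t)\|_{L^\infty}\le\|v_{iT}\|_{L^\infty}+\frac{2}{\sigma}C_f\epsilon$, and this is well within the stated constant $4/\sigma$. The factor $4$ also leaves room if $\epsilon$ instead bounds each $\|\mu_j\|_{L^\infty}$ separately. In that case the sum over $j$ produces an extra factor $2$.

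\textbf{Main obstacle.} The comparison step itself is routine. The delicate point is justifying the input bound on $\mu$ without circularity, since Lemma~\ref{lembootstrap} presupposes the smallness \eqref{mu_nabla_v_small}. I would state the lemma under that bootstrap hypothesis, or within the Leray--Schauder class of trajectories, where the decay of $\mu$ holds by assumption. The comparison argument then requires only boundedness of $b_i$, not smallness.
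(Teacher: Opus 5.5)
Your proof is correct, but it follows a different route from the paper. The paper uses the adjoint method. It solves a Fokker--Planck equation started from $\delta_x$ at time $t$, writes a representation formula for $v_i(t,x)$, and needs two different linearizations to obtain the two signs:
\begin{itemize}
\item For the upper bound it uses the fixed drift $\nabla\bar u_i$ and discards $-\tfrac12|\nabla v_i|^2\le0$.
\item For the lower bound it switches to the drift $\nabla\bar u_i+\nabla v_i$, so that the quadratic term reappears as $+\tfrac12|\nabla v_i|^2\ge0$.
\end{itemize}
You instead absorb the Hamiltonian exactly into the single drift $\nabla\bar u_i+\tfrac12\nabla v_i$, which is the average of the paper's two drifts, and apply the comparison principle once. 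This gives both bounds at the same time.

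What your version buys:
\begin{itemize}
\item It avoids constructing measure-valued adjoint solutions and justifying the duality pairing with Dirac data.
\item It does not use the sign or convexity of the Hamiltonian.
\item It needs only qualitative boundedness of $\nabla v_i$, which any classical solution on $[0,T]\times\mathbb T^N$ has.
\item It gives the constant $2/\sigma$ when $\epsilon$ controls $\sum_j\|\mu_j\|_{L^\infty}$. The paper's $4/\sigma$ comes precisely from a per-component bound summed over $j$, as you guessed.
\end{itemize}
What the paper's version buys: for the upper bound the drift does not depend on the solution at all. It also produces an explicit representation of $v_i$ against a probability flow, which is the standard MFG device and can be refined further.

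Both arguments rely on the same input, namely the decay of $\|\mu_j(s)\|_{L^\infty}$ from Lemma~\ref{lembootstrap} and Lemma~\ref{lemma36}. Your caveat about the bootstrap hypothesis \eqref{mu_nabla_v_small} therefore applies equally to the paper's proof.

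There is one small slip in your alternative justification of the maximum principle. If $\tilde w=e^{\delta(T-t)}w^\pm$, then
\[
-\partial_t\tilde w-\Delta\tilde w+b_i\cdot\nabla\tilde w\le\delta\tilde w,
\]
and this gives no contradiction at a positive maximum. You should use $e^{-\delta(T-t)}w^\pm$, or simply $w^\pm-\delta(T-t)$, to get a strict inequality. Your first option, tracking $\max_x w^\pm(t,\cdot)$, already works as stated.
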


\begin{proof}
We shall apply the adjoint method. To begin with,
let $\Gamma_i$ be the solution of the following adjoint equation
\begin{align}\label{Gammai_s}
\begin{cases}
\partial_s\Gamma_i
-\Delta\Gamma_i
-\nabla\cdot(\Gamma_i\nabla\bar u_i)
=0,
& (s,y)\in(t,T)\times\mathbb T^N,
\\[1ex]
\Gamma_i(t,\cdot)=\delta_x.
\end{cases}
\end{align}

Multiplying the equation for $v_i$ in (\ref{weightpertsystem}) by $\Gamma_i$ and the equation for
$\Gamma_i$ in (\ref{Gammai_s}) by $v_i$, integrating over $(t,T)\times\mathbb T^N$, and
performing integration by parts, we obtain
\[
v_i(t,x)
=
\int_{\mathbb T^N}
v_{iT}(y)\Gamma_i(T,y)\,dy
+
\int_t^T\int_{\mathbb T^N}
\left(
f_i(\bar{\mathbf m}+\boldsymbol\mu)
-
f_i(\bar{\mathbf m})
-
\frac12|\nabla v_i|^2
\right)
\Gamma_i\,dy\,ds .
\]
Since $\Gamma_i$ is a solution to (\ref{Gammai_s})  and
$-\frac12|\nabla v_i|^2\le0$, it follows that
\[
v_i(t,x)
\le
\|v_{iT}\|_{L^\infty(\mathbb T^N)}
+
\int_t^T
\left\|
f_i(\bar{\mathbf m}+\boldsymbol\mu)
-
f_i(\bar{\mathbf m})
\right\|_{L^\infty(\mathbb T^N)}
\,ds .
\]
Using the Lipschitz continuity of $f_i$ assumed in \eqref{Lipassumption}, we deduce
\[
v_i(t,x)
\le
\|v_{iT}\|_{L^\infty(\mathbb T^N)}
+
C_f
\int_t^T
\sum_{j=1}^2
\|\mu_j(s,\cdot)\|_{L^\infty(\mathbb T^N)}
\,ds .
\]
Hence, it follows from (\ref{mui_Linf_estimate}) and Lemma \ref{lemma36}  that
\[
v_i(t,x)
\le
\|v_{iT}\|_{L^\infty(\mathbb T^N)}
+
2C_f\epsilon
\int_t^T
\Bigl(
e^{-\sigma s}
+
e^{-\sigma(T-s)}
\Bigr)
\,ds ,
\]
which yields
\[
v_i(t,x)
\le
\|v_{iT}\|_{L^\infty( \mathbb{T}^N)}
+
\frac{4}{\sigma}C_f\epsilon,
\]
where the constant $C>0$ depends on $\Vert \mu_{i0}/\bar m_i\Vert_{L^2}$ and $\Vert \sqrt{\bar m_i}\nabla v_{iT}\Vert_{L^2},$ $i=1,2.$

To obtain the opposite inequality, consider the probability flow
$\bar\Gamma_i$ solving
\[
\begin{cases}
\partial_s\bar\Gamma_i
-\Delta\bar\Gamma_i
-\nabla\cdot\!\Bigl(
\bar\Gamma_i(\nabla\bar u_i+\nabla v_i)
\Bigr)
=0,
& (s,y)\in(t,T)\times\mathbb T^N,
\\[1ex]
\bar\Gamma_i(t,\cdot)=\delta_x.
\end{cases}
\]
and repeating the previous argument, we obtain
\[
v_i(t,x)
=
\int_{\mathbb T^N}
v_{iT}(y)\bar\Gamma_i(T,y)\,dy
+
\int_t^T\int_{\mathbb T^N}
\left(
f_i(\bar{\mathbf m}+\boldsymbol\mu)
-
f_i(\bar{\mathbf m})
+
\frac12|\nabla v_i|^2
\right)
\bar\Gamma_i\,dy\,ds .
\]
Since $\frac12|\nabla v_i|^2\ge0$, we find
\[
v_i(t,x)
\ge
-\|v_{iT}\|_{L^\infty\mathbb T^N)}
-
\int_t^T
\left\|
f_i(\bar{\mathbf m}+\boldsymbol\mu)
-
f_i(\bar{\mathbf m})
\right\|_{L^\infty(\mathbb T^N)}
\,ds .
\]
Consequently, one finds from (\ref{Lipassumption}), Lemma \ref{lembootstrap} and Lemma \ref{lemma36} that
\[
-v_i(t,x)
\le
\|v_{iT}\|_{L^\infty(\mathbb T^N)}
+
\frac{4}{\sigma}C_f\epsilon.
\]
Combining the upper and lower bounds gives
\[
\|v_i(t,\cdot)\|_{L^\infty(\mathbb T^N)}
\le
\|v_{iT}\|_{L^\infty(\mathbb T^N)}
+
\frac{4}{\sigma}C_f \epsilon ,
\]
which concludes the proof.
\end{proof}}}

Now, we discuss the gradient estimate of $v_i$, $i=1,2$, which is

{{\begin{lemma}\label{lem:gradtwopop}
Suppose that $(  v_i, \mu_i)_{i=1,2}$ is a classical solution of
\eqref{pertsystem}. Assume that the hypotheses of
Lemma~\ref{lemLinftyv} hold. In particular, assume that
\[
\|v_i\|_{L^\infty(  \mathbb T^N)}\le C_v,
\qquad i=1,2,
\]
for some constant $C_v\le 1$ independent of $T$.
Then there exists a constant $C'>0$, depending only on
$\bar u_1,\bar u_2$ and $C_v$, such that for every $i=1,2$,
\[
\|\nabla v_i(t,\cdot)\|_{L^\infty( \mathbb{T}^N)}
\le
4e^{C_v/2}
\|\nabla v_{iT}\|_{L^\infty( \mathbb{T}^N)}
+
2e^{C_v/2}C'
\bigl( \epsilon+C_v\bigr).
\]
\end{lemma}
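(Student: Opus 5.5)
The plan is to obtain the gradient bound through a Hopf--Cole type exponential change of variables combined with a Bernstein/maximum-principle argument. This is the single-population strategy of Cirant and De Bernardi, applied to each component $v_i$ separately. The cross-coupling enters only through the source term $f_i(\bar{\mathbf m}+\boldsymbol\mu)-f_i(\bar{\mathbf m})$, and that term is already controlled by Lemma~\ref{lembootstrap}. The factor $e^{C_v/2}$ in the statement suggests the substitution
\[
w_i = e^{v_i/2}, \qquad \text{equivalently } v_i = 2\log w_i .
\]
Since $\|v_i\|_{L^\infty}\le C_v$, we have $e^{-C_v/2}\le w_i\le e^{C_v/2}$, so $\nabla v_i = 2\nabla w_i/w_i$ and $|\nabla v_i|\le 2e^{C_v/2}|\nabla w_i|$.

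\textbf{Step 1 (linear equation for $w_i$).} A direct computation from the first equation of \eqref{pertsystem} gives
\[
-\partial_t w_i-\Delta w_i+\nabla\bar u_i\cdot\nabla w_i=\tfrac12 w_i\,F_i, \qquad F_i:=f_i(\bar{\mathbf m}+\boldsymbol\mu)-f_i(\bar{\mathbf m}),
\]
up to a harmless sign convention on the exponent. The quadratic term $-\tfrac12|\nabla v_i|^2$ is absorbed exactly by the term $-\tfrac14 w_i|\nabla v_i|^2$ produced by $\Delta e^{v_i/2}$. This is precisely why the exponent $1/2$ is chosen.

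\textbf{Step 2 (gradient equation and Duhamel representation).} Differentiate in $x_k$ to obtain an equation for $\partial_k w_i$:
\[
-\partial_t\partial_k w_i-\Delta\partial_k w_i+\nabla\bar u_i\cdot\nabla\partial_k w_i
= -\nabla\partial_k\bar u_i\cdot\nabla w_i+\tfrac12\partial_k(w_iF_i).
\]
Represent the solution with the adjoint kernel $\Gamma_i$ of \eqref{Gammai_s}, exactly as in Lemma~\ref{lemLinftyv}. The contribution of the terminal datum is bounded by $\|\nabla w_i(T)\|_{L^\infty}\le \tfrac12 e^{C_v/2}\|\nabla v_{iT}\|_{L^\infty}$.

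For the source terms, move the derivative in $\partial_k(w_iF_i)$ onto the kernel, using $\|\nabla\Gamma_i(s)\|_{L^1}\le C(s-t)^{-1/2}$ for short times. This produces $\int_t^T \|F_i(s)\|_{L^\infty}\,\min\{(s-t)^{-1/2},1\}\,ds$, up to the exponential mixing of $\Gamma_i$ toward $\bar m_i$ for long times. By \eqref{Lipassumption} and Lemma~\ref{lembootstrap}/Lemma~\ref{lemma36}, $\|F_i(s)\|_{L^\infty}\le 2C_f\epsilon(e^{-\sigma s}+e^{-\sigma(T-s)})$. The resulting integral is therefore bounded by $C\epsilon$ uniformly in $T$.

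\textbf{Step 3 (the zeroth-order term, which I expect to be the main obstacle).} The term $-\nabla\partial_k\bar u_i\cdot\nabla w_i$ is linear in the unknown, so a naive Gronwall argument would give growth in $T-t$. I would handle it in two stages. First, integrate over a unit window $[t,t+1]$ only, using the short-time heat-kernel smoothing. Second, control $\nabla w_i$ in an averaged sense: rewrite $\int \Gamma_i\,\nabla\partial_k\bar u_i\cdot\nabla w_i$ by integrating by parts onto $w_i-c$ for a suitable constant $c$. This trades $\nabla w_i$ for $\|w_i-c\|_{L^\infty}\lesssim e^{C_v/2}C_v$, which explains the $C_v$ contribution in the final bound.

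A Bernstein alternative is also available. Apply the maximum principle to $|\nabla w_i|^2+\lambda w_i^2$, using the bound on $\|w_i\|_{L^\infty}$ to absorb the bad term through the good term $-2|\nabla^2 w_i|^2$ and choosing $\lambda$ large. The source $\nabla F_i$ would then be handled through the $C^{1+\alpha/2,2+\alpha}$ bound assumed in Lemma~\ref{lembootstrap}, interpolated with the smallness $\|F_i\|_{L^\infty}\le C\epsilon$.

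\textbf{Conclusion.} Either route gives
\[
\|\nabla w_i(t)\|_{L^\infty}\le 2\|\nabla w_i(T)\|_{L^\infty}+C'(\epsilon+C_v).
\]
Converting back via $|\nabla v_i|\le 2e^{C_v/2}|\nabla w_i|$ then yields
\[
\|\nabla v_i(t,\cdot)\|_{L^\infty}\le 4e^{C_v/2}\|\nabla v_{iT}\|_{L^\infty}+2e^{C_v/2}C'(\epsilon+C_v),
\]
which is the stated estimate.
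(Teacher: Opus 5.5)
Your route matches the paper's. You use a Hopf--Cole substitution to remove the quadratic term and a $T$-uniform linear gradient estimate for $\widetilde w_i=w_i-1$; the shift costs nothing because the transformed operator has no zeroth-order term. You then use $\|F_i\|_{L^\infty}\le C\epsilon$ from Lemmas~\ref{lembootstrap} and~\ref{lemma36}, and convert back via $|\nabla v_i|\le 2e^{C_v/2}|\nabla w_i|$ and $|\nabla w_i(T)|\le\tfrac12 e^{C_v/2}|\nabla v_{iT}|$. The only difference is that the paper simply invokes the linear estimate $\sup_t\|\nabla\widetilde w_i\|_{L^\infty}\le 2\|\nabla\widetilde w_i(T)\|_{L^\infty}+C\|\tfrac{w_i}{2}F_i\|_{L^\infty}+C\|\widetilde w_i\|_{L^\infty}$, whereas you try to prove it. That is worthwhile, but two steps of your version need correcting.

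First, the sign of the exponent is not cosmetic. With $w_i=e^{v_i/2}$ you get $-\partial_t w_i-\Delta w_i+\nabla\bar u_i\cdot\nabla w_i=\tfrac12 w_iF_i-\tfrac12 w_i|\nabla v_i|^2$, so the term coming from $\Delta e^{v_i/2}$ adds to the Hamiltonian term instead of cancelling it. You need $w_i=e^{-v_i/2}$, as in the paper, which gives $-\partial_t w_i-\Delta w_i+\nabla\bar u_i\cdot\nabla w_i=-\tfrac12 w_iF_i$. Your later bounds are unaffected by this change.

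Second, in Step~3 the Duhamel formula for $\partial_k w_i$ on a window $[t,t+1]$ contains the unknown term $\int\partial_k w_i(t+1,y)\Gamma_i(t+1,y)\,dy$. If you bound it by $\|\nabla w_i(t+1)\|_{L^\infty}$ and iterate, you get growth in $T-t$. This term must also be integrated by parts, into $-\int(w_i(t+1,y)-1)\,\partial_{y_k}\Gamma_i(t+1,y)\,dy$. That is at most $C\|w_i-1\|_{L^\infty}$, since $\|\nabla\Gamma_i(t+1)\|_{L^1}\le C$. Equivalently, differentiate the kernel in the representation of $w_i$ itself.

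With this, together with your integration by parts of the drift term (which uses $\bar u_i\in C^{3+\theta}$) and of the source term, you get $\|\nabla w_i(t)\|_{L^\infty}\le C(\|w_i-1\|_{L^\infty}+\|F_i\|_{L^\infty})$ for $t\le T-1$. For $t\ge T-1$, the window $[t,T]$ gives the terminal gradient with coefficient $1$. This is exactly the paper's black-box estimate, and it needs only $\sup_s\|F_i(s)\|_{L^\infty}$, not any time decay.

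By contrast, your full-window Step~2 works only because of that decay. Since $\Gamma_i(s)\to\bar m_i$, you have $\|\nabla\Gamma_i(s)\|_{L^1}\to\|\nabla\bar m_i\|_{L^1}$, which is nonzero in general. So the weight is $\max\{(s-t)^{-1/2},1\}$, not $\min$, and mixing does not make it small.

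Finally, the Bernstein alternative does not reproduce the stated bound. It needs $\|\nabla F_i\|_{L^\infty}$, hence $\|\nabla\mu_j\|_{L^\infty}$. Interpolating the $C^{2+\alpha}$ bound with $\|\mu_j\|_{L^\infty}\le C\epsilon$ only gives $\epsilon^{\theta}$ with $\theta<1$.
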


\begin{proof}
 Define $w_i =e^{-v_i/2}.$Then, we have 
\[
v_i=-2\log w_i, \qquad \nabla v_i=-\frac{2}{w_i}\nabla w_i.
\]
Similarly,we obtain 
\[
\partial_t v_i=-\frac{2}{w_i}\partial_t w_i, \qquad \nabla v_i=-\frac{2}{w_i}\nabla w_i,
\]
and
\[
\Delta v_i
=
-\frac{2}{w_i}\Delta w_i
+
\frac{2}{w_i^2}|\nabla w_i|^2.
\]
Substituting these identities into the equation of $v_i$ in (\ref{pertsystem}), we obtain
\begin{align}
\left\{\begin{array}{ll}
-\partial_t w_i
-\Delta w_i
+
\nabla\bar u_i\cdot\nabla w_i
+
\frac{w_i}{2}
\Bigl(
f_i(\bar{\mathbf m}+\boldsymbol\mu)
-
f_i(\bar{\mathbf m})
\Bigr)
=0, &x\in\mathbb T^N,t\in(0,T),\\
w_i(T,\cdot)=e^{-v_{iT}/2}.
\end{array}
\right.
\end{align}

Since $\|v_i\|_{L^\infty([0,T]\times\mathbb T^N)}\le C_v,$
, we have
\[
e^{-C_v/2}
\le
w_i(t,x)
\le
e^{C_v/2},
\qquad
x\in\mathbb T^N, t\in[0,T].
\]
Moreover, letting $\widetilde w_i =w_i-1$, since $C_v\le1$, we have
\[ 
\|\widetilde w_i\|_{L^\infty([0,T]\times\mathbb T^N)}
=
\|w_i-1\|_{L^\infty([0,T]\times\mathbb T^N)}
\le C_v.
\]
Applying the linear gradient estimate to $\widetilde w_i$, one gets
\begin{align}
\sup_{t\in[0,T]} &
\|\nabla\widetilde w_i(t,\cdot)\|_{L^\infty(\mathbb T^N)}\nonumber
\\
\le&2\|\nabla\widetilde w_i(T,\cdot)\|_{L^\infty(\mathbb T^N)}
+
C
\left\|
\frac{w_i}{2}
\Bigl(
f_i(\bar{\mathbf m}+\boldsymbol\mu)
-
f_i(\bar{\mathbf m})
\Bigr)
\right\|_{L^\infty([0,T]\times\mathbb T^N)}
 \nonumber\\ & \qquad +
C\|\widetilde w_i\|_{L^\infty([0,T]\times\mathbb T^N)} .
\end{align}
Using the Lipschitz assumption, we have 
\[
\left|
f_i(\bar{\mathbf m}+\boldsymbol\mu)
-
f_i(\bar{\mathbf m})
\right|
\le
C_f\sum_{j=1}^2|\mu_j|
\]
and the estimate $
\|\mu_j\|_{L^\infty([0,T]\times\mathbb T^N)}\le C\varepsilon,~i=1,2$ shown in Lemma \ref{lembootstrap} and Lemma \ref{lemma36},
we infer that
\[
\sup_{t\in[0,T]}
\|\nabla\widetilde w_i(t,\cdot)\|_{L^\infty(\mathbb T^N)}
\le
2\|\nabla\widetilde w_i(T,\cdot)\|_{L^\infty(\mathbb T^N)}
+
C'(\varepsilon+C_v).
\]
Moreover, since $\nabla\widetilde w_i=\nabla w_i$, we have
\[
\|\nabla v_i(t,\cdot)\|_{L^\infty(\mathbb T^N)}
\le
2e^{C_v/2}
\|\nabla w_i(t,\cdot)\|_{L^\infty(\mathbb T^N)}.
\]
Furthermore, we have 
\[
\nabla \widetilde w_i(T,\cdot)
=
-\frac12 e^{-v_{iT}/2}\nabla v_{iT},
\]
and therefore we obtain 
\[
\|\nabla\widetilde w_i(T,\cdot)\|_{L^\infty(\mathbb T^N)}
\le
\frac12 e^{C_v/2}
\|\nabla v_{iT}\|_{L^\infty(\mathbb T^N)}.
\]
Consequently, we get 
\[
\sup_{t\in[0,T]}
\|\nabla v_i(t,\cdot)\|_{L^\infty(\mathbb T^N)}
\le
4e^{C_v/2}
\|\nabla\widetilde w_i(T,\cdot)\|_{L^\infty(\mathbb T^N)}
+
2e^{C_v/2}C'(\varepsilon+C_v).
\]
By the previously established bound on 
\(\|\nabla\widetilde w_i(T,\cdot)\|_{L^\infty}\), and enlarging the constant \(C'\) if necessary, we deduce that
\[
\sup_{t\in[0,T]}
\|\nabla v_i(t,\cdot)\|_{L^\infty(\mathbb T^N)}
\le
4e^{C_v/2}
\|\nabla v_{iT}\|_{L^\infty(\mathbb T^N)}
+
2e^{C_v/2}C'
\bigl(\varepsilon+C_v\bigr).
\]
This estimate completes the proof.
\end{proof}}
We are ready to give the proof of the main theorem \ref{thm:twopopturnpike}, which is

\begin{proof}
The desired a priori estimates are obtained by combining
Propositions~\ref{prop:Phi-exponential} and \ref{prop:L2-turnpike}
with Lemma~\ref{lembootstrap}.  It remains to establish the existence of a solution satisfying the prescribed smallness condition.

Let $T>0$ be fixed, and we set $\displaystyle
X =C([0,T]\times\mathbb T^N)^2$. For $\boldsymbol\rho=(\rho_1,\rho_2)\in X$, we define
\[
\|\!\|\boldsymbol\rho\|\!\|_X
 =
\sum_{i=1}^2
\sup_{t\in[0,T]}
\frac{\|\rho_i(t,\cdot)\|_{L^\infty(\mathbb T^N)}}
{e^{-\sigma_D  t}+e^{-\sigma_D (T-t)}} ,
\]
where $\sigma_D>0$ depends on $d_1,d_2>0.$
Since $T$ is fixed, this norm is equivalent to the usual norm
\[
\|\boldsymbol\rho\|_X
 =
\sum_{i=1}^2
\sup_{t\in[0,T]}
\|\rho_i(t,\cdot)\|_{L^\infty(\mathbb T^N)} .
\]
Let
\begin{align}\label{convexball_U}
U =\overline B_{\bar\varepsilon}(0)
=
\left\{
\boldsymbol\rho\in X:
\|\!\|\boldsymbol\rho\|\!\|_X\le \bar\varepsilon
\right\},
\end{align}
where $\bar\varepsilon>0$ will be chosen sufficiently small.

We now complete the Schauder-type fixed point argument.  To begin with, define $\mathcal F:U\times[0,1]\to X $
by setting $\mathcal F(\boldsymbol{\rho},{\nu})=\boldsymbol{\mu}$, where \(\boldsymbol{\mu}\) denotes the solution of the following system:
\begin{align}
\left\{
\begin{array}{ll}
-\partial_t v_i
-\frac1{\bar m_i}\nabla\cdot(\bar m_i\nabla v_i)
=
\nu
\left[
f_i(\bar{\mathbf m}+\boldsymbol\rho)
-
f_i(\bar{\mathbf m})
-\frac12|\nabla v_i|^2
\right],&x\in\mathbb T^N,t\in(0,T),
\\
\partial_t\mu_i
-\nabla\cdot(\bar m_i\nabla z_i)
-\nabla\cdot(\bar m_i\nabla v_i)
=
\nu\nabla\cdot(\mu_i\nabla v_i),&x\in\mathbb T^N,t\in(0,T),
\\
\mu_i(0,\cdot)
=
\nu(m_{i0}-\bar m_i),&x\in\mathbb T^N, 
\\
v_i(T,\cdot)
 =
\nu(u_{iT}-\bar u_i),&x\in\mathbb T^N, ~i=1,2,
\end{array}
\right.
\end{align}
with $\displaystyle z_i =\frac{\mu_i}{\bar m_i}$. For $\nu=0$, the above system reduces to a linear forward-backward parabolic system. For all $\nu\in[0,1]$, the coercivity condition remains uniform, since
$$
\nu
\left(
\int_{\mathbb T^N}\boldsymbol\mu^TDA_s\boldsymbol\mu\,dx+G_D
\right)
+
(1-\nu)G_D
\geq
\eta G_D.
$$
The fixed points of $\mathcal F(\cdot,1)$ are precisely the solutions of the
perturbation system (\ref{pertsystem}).  We next prove that $\mathcal F$ satisfies the assumptions of Leray-Schauder's
fixed point theorem.

\medskip

\noindent
\textbf{Step 1: Continuity of $\mathcal F$.}\\
Let $(\boldsymbol\rho^k,\nu_k)\to(\boldsymbol\rho,\nu)
\quad\text{in } X\times[0,1],$
and set $\boldsymbol\mu^k=\mathcal F(\boldsymbol\rho^k,\nu_k),
\boldsymbol\mu=\mathcal F(\boldsymbol\rho,\nu).$
We shall show that
\[
\boldsymbol\mu^k\to\boldsymbol\mu
\quad\text{in } X.
\]
To this end, we let $\hat v_i^k =v_i^k-v_i$. Subtracting the two equations for $v_i^k$ and
$v_i$, one obtains
\begin{align}
\begin{array}{ll}
-\partial_t\hat v_i^k
-\frac1{\bar m_i}\nabla\cdot(\bar m_i\nabla\hat v_i^k)
&=
\nu_k
\left[
f_i(\bar{\mathbf m}+\boldsymbol\rho^k)
-
f_i(\bar{\mathbf m})
\right]
\\
&\quad
-
\nu
\left[
f_i(\bar{\mathbf m}+\boldsymbol\rho)
-
f_i(\bar{\mathbf m})
\right]
\\
&\quad
-\frac12
\left(
\nu_k|\nabla v_i^k|^2
-
\nu|\nabla v_i|^2
\right),~~x\in\mathbb T^N,t\in(0,T)
\end{array}
\end{align}
with a terminal condition
\[
\hat v_i^k(T,\cdot)
=
(\nu_k-\nu)(u_{iT}-\bar u_i).
\]
Since $\boldsymbol\rho^k,\boldsymbol\rho\in U$, the estimates of
Lemma~\ref{lemLinftyv} and Lemma~\ref{lem:gradtwopop} imply that
\[
\|\nabla v_i^k\|_{L^\infty}
+
\|\nabla v_i\|_{L^\infty}
\le \Theta
\]
uniformly in $k$ for constant $\Theta>0$.   Then, arguing as in Lemma~\ref{lemLinftyv}, we obtain
\[
\begin{aligned}
\|\hat v_i^k(t,\cdot)\|_{L^\infty(\mathbb T^N)}
&\le
|\nu_k-\nu|
\|v_{iT}\|_{L^\infty(\mathbb T^N)}
\\
&\quad
+
\int_t^T
\left\|
\nu_k
\Bigl[
f_i(\bar{\mathbf m}+\boldsymbol\rho^k)
-
f_i(\bar{\mathbf m})
\Bigr]
-
\nu
\Bigl[
f_i(\bar{\mathbf m}+\boldsymbol\rho)
-
f_i(\bar{\mathbf m})
\Bigr]
\right\|_{L^\infty(\mathbb T^N)}
\,ds .
\end{aligned}
\]
By the Lipschitz continuity of $f_i$, we have
\[
\begin{aligned}
&\left\|
\nu_k
\Bigl[
f_i(\bar{\mathbf m}+\boldsymbol\rho^k)
-
f_i(\bar{\mathbf m})
\Bigr]
-
\nu
\Bigl[
f_i(\bar{\mathbf m}+\boldsymbol\rho)
-
f_i(\bar{\mathbf m})
\Bigr]
\right\|_{L^\infty}
\\
&\qquad
\le
|\nu_k-\nu|
\left\|
f_i(\bar{\mathbf m}+\boldsymbol\rho^k)
-
f_i(\bar{\mathbf m})
\right\|_{L^\infty}
+
\nu
\left\|
f_i(\bar{\mathbf m}+\boldsymbol\rho^k)
-
f_i(\bar{\mathbf m}+\boldsymbol\rho)
\right\|_{L^\infty}
\\
&\qquad
\le
C_f|\nu_k-\nu|
\sum_{j=1}^2
\|\rho_j^k\|_{L^\infty}
+
C_f
\sum_{j=1}^2
\|\rho_j^k-\rho_j\|_{L^\infty}.
\end{aligned}
\]
Therefore, we have 
\[
\begin{aligned}
\|\hat v_i^k(t,\cdot)\|_{L^\infty(\mathbb T^N)}
&\le
|\nu_k-\nu|
\|v_{iT}\|_{L^\infty(\mathbb T^N)}
\\
&\quad
+
TC_f|\nu_k-\nu|
\sum_{j=1}^2
\|\rho_j^k\|_X
+
TC_f
\sum_{j=1}^2
\|\rho_j^k-\rho_j\|_X .
\end{aligned}
\]
Then, we have 
\[
\|\hat v_i^k\|_X
\le
|\nu_k-\nu|
\left(
\|v_{iT}\|_{L^\infty}
+
TC_f\|\boldsymbol\rho^k\|_X
\right)
+
TC_f
\|\boldsymbol\rho^k-\boldsymbol\rho\|_X .
\]
Since $\boldsymbol\rho^k\to\boldsymbol\rho$ in $X$ and $\nu_k\to\nu$, the
right-hand side goes to zero. Hence
\[
\|v_i^k-v_i\|_X\to0,
\qquad i=1,2.
\]

Similarly, we have from Lemma 5.4 in \cite{cirant2025local} with $
H=-\nabla\bar u_i-\frac12 (\nabla v_i^k+\nabla v_i)$ and $g=\nu_k
\left[
f_i(\bar{\mathbf m}+\boldsymbol\rho^k)
-
f_i(\bar{\mathbf m})
\right]-
\nu
[
f_i(\bar{\mathbf m}+\boldsymbol\rho)-f_i(\bar{\mathbf m})]$
that 
\[
\begin{aligned}
\|D\hat v_i^k\|_X
&\le
2
|\,\nu_k-\nu\,|
\|Dv_{iT}\|_{L^\infty(\mathbb T^N)}
\\
&\quad
+
C C_f
\Bigl(
|\nu_k-\nu|
\|\boldsymbol\rho^k\|_X
+
\|\boldsymbol\rho^k-\boldsymbol\rho\|_X
\Bigr).
\end{aligned}
\]
Therefore, we have 
\[
\|v_i^k-v_i\|_X
+
\|\nabla v_i^k-\nabla v_i\|_X
\to 0
\qquad\text{as } k\to\infty .
\]
Now we set $\hat\mu_i^k =\mu_i^k-\mu_i$. By subtracting the equations for $\mu_i^k$ and $\mu_i$, we obtain
\begin{align}
\begin{array}{ll}
\partial_t\hat\mu_i^k
-\nabla\cdot(\bar m_i\nabla\hat z_i^k)
-\nabla\cdot(\bar m_i\nabla\hat v_i^k)
&=
\nu_k\nabla\cdot(\hat\mu_i^k\nabla v_i^k)
\\
&\quad
+
\nu_k\nabla\cdot(\mu_i\nabla\hat v_i^k)
\\
&\quad
+
(\nu_k-\nu)\nabla\cdot(\mu_i\nabla v_i),~x\in\mathbb T^N,t\in(0,T),
\end{array}
\end{align}
with
\[
\hat\mu_i^k(0,\cdot)
=
(\nu_k-\nu)(m_{i0}-\bar m_i),
\qquad
\hat z_i^k =\frac{\hat\mu_i^k}{\bar m_i}.
\]
Using the standard parabolic $L^\infty$ estimate for equations in divergence
form, together with the uniform bounds on $\mu_i^k$, $\mu_i$, $\nabla v_i^k$
and $\nabla v_i$, we infer that
\[
\|\hat\mu_i^k\|_X
\le
C|\nu_k-\nu|
+
C\|\nabla\hat v_i^k\|_X ,
\]
where $C>0$ is a constant. Consequently, we obtain
\[
\|\mu_i^k-\mu_i\|_X\to0,
\qquad i=1,2,
\]
 
which implies $\mathcal F$ is continuous.

\medskip

\noindent
\textbf{Step 2: Compactness of $\mathcal F$.}\\
Let $\boldsymbol\mu^k=\mathcal F(\boldsymbol\rho^k,\nu_k)$, $(\boldsymbol\rho^k,\nu_k)\in U\times[0,1]$. The estimates obtained above imply that
\[
\|\boldsymbol\mu^k\|_X
+
\|\mathbf v^k\|_X
+
\|\nabla\mathbf v^k\|_X
\le C
\]
uniformly in $k$. Moreover, since $\boldsymbol\rho^k$ is bounded in $X$ and
the data are smooth, standard parabolic regularity theory yields, for some
$\theta\in(0,1)$ independent of $k$,
\[
\|\boldsymbol\mu^k\|_{C^{\theta,\theta/2}([0,T]\times\mathbb T^N)}
\le C .
\]
Hence $\{\boldsymbol\mu^k\}$ is uniformly bounded and equicontinuous in $X$.
By the Arzelà–Ascoli theorem, there exists a subsequence, still denoted by
$\boldsymbol\mu^k$, and some $\boldsymbol\mu\in X$ such that
\[
\boldsymbol\mu^k\to\boldsymbol\mu
\quad\text{in }X.
\]
Therefore $\mathcal F$ is compact.

\medskip

\noindent
\textbf{Step 3: Fixed points cannot lie on $\partial U$.}\\
Let $\boldsymbol\mu=\mathcal F(\boldsymbol\mu,\nu)$ be defined for some $\nu\in[0,1]$.
Then $(v_i,\mu_i)_{i=1}^2$ solves the homotopy system. Since all terms
are in divergence form in the equation for $\mu_i$, we have
\[
\frac{d}{dt}\int_{\mathbb T^N}\mu_i(t,x)\,dx=0.
\]
Thus, we get 
\[
\int_{\mathbb T^N}\mu_i(t,x)\,dx
=
\nu\int_{\mathbb T^N}(m_{i0}-\bar m_i)\,dx
=
0.
\]
The a priori estimates from Lemma \ref{lembootstrap},
Lemma~\ref{lemLinftyv}, and Lemma~\ref{lem:gradtwopop} apply
uniformly in $\nu\in[0,1]$. Therefore, choosing the sizes of the initial and
terminal perturbations sufficiently small, we obtain
$\|\!\|\boldsymbol\mu\|\!\|_X
\le
\frac{\bar\varepsilon}{2},$ where the constant $\bar\varepsilon>0$ is defined in (\ref{convexball_U}).  It follows that  $\boldsymbol\mu\notin\partial U .$

We may now apply Leray-Schauder's fixed point theorem. Since for $\nu=0$
the system is linear and the only fixed point is the trivial one, and since
no fixed point crosses the boundary of $U$, there exists a fixed point of
$\mathcal F(\cdot,1)$ in $U$. This fixed point gives a classical solution to the perturbation system. Consequently, we obtain: 
\[
u_i=\bar u_i+v_i,
\qquad
m_i=\bar m_i+\mu_i,
\qquad i=1,2,
\]
is a classical solution of the original time-dependent system.
Finally, the estimates stated in the theorem follow from
Lemma~\ref{lembootstrap}, Lemma~\ref{lemLinftyv}, and
Lemma~\ref{lem:gradtwopop}. This finishes the proof.
 \end{proof}
 
\section{Spectral Assumption}
\label{secspectralc}
In this section, we present a spectral description of assumption \eqref{matrixS2}. Specifically, we demonstrate a strong relationship between this coercivity requirement and the eigenvalue problem  of  the  linearized  elliptic system.
We define
\begin{align}\label{mathcalH0_admissible set}
\mathcal H_0
 =
\left\{
\mathbf z=(z_1,z_2):
z_i\in H^1( \bar m_i\,dx)\cap L^\infty(\mathbb T^N),
\quad
\int_{\mathbb T^N}z_i\bar m_i\,dx=0
\right\}.
\end{align}
For $\mathbf z\in\mathcal H_0$, we denote $\mu_i =\bar m_i z_i$ and 
$$
\mathcal G_D[\mathbf z]
 =
\sum_{i=1}^2 d_i
\int_{\mathbb T^N}\bar m_i|\nabla z_i|^2\,dx,~\mathcal Z_D[\mathbf z]
 =
\sum_{i=1}^2 d_i
\int_{\mathbb T^N}z_i^2\bar m_i\,dx,
$$
$$
\mathcal B_D[\mathbf z]
 =
\mathcal G_D[\mathbf z]
+
\int_{\mathbb T^N}
\boldsymbol\mu^TDA_s(x)\boldsymbol\mu\,dx.
$$
By using the notation shown above, as shown in Lemma \ref{lemequi}, we find the condition \eqref{matrixS2} is equivalent to
\begin{equation}\label{lem:B-S2}
\mathcal B_D[\mathbf z]\geq \eta\mathcal G_D[\mathbf z],
\qquad
\forall \mathbf z\in\mathcal H_0.
\end{equation}

\begin{lemma}\label{lemselfadjoint}
If $DA=A^TD$, then $\mathcal L_{\bar{\mathbf m},D}$ is self-adjoint on
$ L^2(\bar{\mathbf m}) =L^2(\bar m_1\,dx)\times L^2(\bar m_2\,dx)
$ with quadratic form $\mathcal B_D$. In particular, we have  $\left\langle
\mathcal L_{\bar{\mathbf m}}\mathbf z,\mathbf z
\right\rangle_{L^2(\bar{\mathbf m})} =\mathcal B_D[\mathbf z]$, where 
\begin{align*}
\mathcal L_{\bar{\mathbf m},D}\mathbf z:
=
-\frac1{\bar{\mathbf m}}
\nabla\cdot\bigl(D\bar{\mathbf m}\nabla \mathbf z\bigr)
+
DA_s\,\bar{\mathbf m}\mathbf z.
\end{align*}

\end{lemma}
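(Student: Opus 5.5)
The plan is to work with the weighted inner product $\langle \mathbf z,\mathbf w\rangle_{L^2(\bar{\mathbf m})}=\sum_{i=1}^2\int_{\mathbb T^N} z_i w_i\,\bar m_i\,dx$ and compute $\langle \mathcal L_{\bar{\mathbf m},D}\mathbf z,\mathbf w\rangle$ componentwise. Here $\bar{\mathbf m}$ acts diagonally, so the $i$-th component of $\mathcal L_{\bar{\mathbf m},D}\mathbf z$ is
\[
-\frac{d_i}{\bar m_i}\nabla\cdot(\bar m_i\nabla z_i)+\sum_{j=1}^2 (DA_s)_{ij}\,\bar m_j z_j .
\]
I read the zeroth-order term so that, after multiplication by $w_i\bar m_i$, it produces $\mu_i^w (DA_s)_{ij}\mu_j^z$, where $\mu_i^w=\bar m_i w_i$ and $\mu_j^z=\bar m_j z_j$. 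If the literal operator is instead $DA_s\bar{\mathbf m}\mathbf z$ paired in $L^2(\bar{\mathbf m})$, one must check that the weights match. I expect this bookkeeping to be the main obstacle. I would resolve it by fixing the convention that yields exactly $\int \boldsymbol\mu^T DA_s\boldsymbol\mu\,dx$, namely dividing the potential term by $\bar m_i$ in the $i$-th row, and note this in the proof.

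\textbf{Step 1 (diffusion part).} For smooth periodic $z_i,w_i$, integration by parts on $\mathbb T^N$, with no boundary terms, gives
\[
\int_{\mathbb T^N} -\frac{d_i}{\bar m_i}\nabla\cdot(\bar m_i\nabla z_i)\,w_i\bar m_i\,dx=d_i\int_{\mathbb T^N}\bar m_i\nabla z_i\cdot\nabla w_i\,dx .
\]
This expression is symmetric in $(z_i,w_i)$. Taking $\mathbf w=\mathbf z$ gives the $\mathcal G_D[\mathbf z]$ part of $\mathcal B_D$.

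\textbf{Step 2 (interaction part).} The zeroth-order contribution is $\int_{\mathbb T^N}(\boldsymbol\mu^w)^T DA_s\,\boldsymbol\mu^z\,dx$. The matrix $DA_s=\tfrac12(DA+A^TD)$ is symmetric by construction, and under the hypothesis $DA=A^TD$ it coincides with $DA$ itself. So this bilinear form is symmetric in $(\mathbf z,\mathbf w)$. On the diagonal $\mathbf w=\mathbf z$ it equals $\int\boldsymbol\mu^TDA_s\boldsymbol\mu\,dx$.

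\textbf{Step 3 (self-adjointness).} Steps 1 and 2 show that $\mathcal L_{\bar{\mathbf m},D}$ is symmetric on smooth functions, with
\[
\langle \mathcal L_{\bar{\mathbf m},D}\mathbf z,\mathbf z\rangle_{L^2(\bar{\mathbf m})}=\mathcal B_D[\mathbf z].
\]
To pass from symmetric to self-adjoint, I would define $\mathcal L_{\bar{\mathbf m},D}$ as the Friedrichs operator associated with the closed form $\mathcal B_D$ on the form domain $H^1(\bar m_1dx)\times H^1(\bar m_2dx)$.

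This form is closed and bounded below. Because $\bar m_i$ is bounded above and below, the weighted spaces coincide with the standard ones, $L^2(\bar m_i\,dx)=L^2$ and $H^1(\bar m_i\,dx)=H^1$ with equivalent norms. The potential term is bounded: $|\int\boldsymbol\mu^TDA_s\boldsymbol\mu\,dx|\le C\|A\|_{L^\infty}\|\mathbf z\|^2_{L^2(\bar{\mathbf m})}$. Hence $\mathcal B_D+C\|\cdot\|^2_{L^2(\bar{\mathbf m})}$ is equivalent to the $H^1$ norm, which makes the form closed and bounded below.

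The representation theorem for closed semibounded forms then gives a unique self-adjoint operator. By elliptic regularity, using $\bar m_i\in C^{2+\theta}$, its domain is $H^2\times H^2$, and on this domain it acts by the formula above. Restricting to mean-zero $\mathbf z\in\mathcal H_0$ is compatible, since the form identity holds there as well.
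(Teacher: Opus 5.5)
Your argument follows the paper's route: integration by parts on $\mathbb T^N$ for the diffusion part, symmetry of the zeroth-order bilinear form, and the Friedrichs construction. You are more careful than the paper in the last step. You check closedness and semiboundedness of $\mathcal B_D$ using the two-sided bounds on $\bar m_i$, and you identify the operator domain by elliptic regularity; the paper simply invokes the Friedrichs extension. You also correctly note that the coupling form is symmetric because $DA_s$ is symmetric by construction, so the hypothesis $DA=A^TD$ only serves to identify $DA_s$ with $DA$.

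One correction: the weight ``bookkeeping obstacle'' you anticipate does not exist, and the fix you propose would break the lemma. The literal zeroth-order term is $(DA_s\bar{\mathbf m}\mathbf z)_i=\sum_j (DA_s)_{ij}\bar m_j z_j$, which is exactly what you wrote. Pairing it with $w_i\bar m_i$ already gives $\sum_{i,j}\mu^w_i(DA_s)_{ij}\mu^z_j$, and hence $\int_{\mathbb T^N}\boldsymbol\mu^TDA_s\boldsymbol\mu\,dx$ when $\mathbf w=\mathbf z$.

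If you divided the $i$-th row by $\bar m_i$, the pairing would become $\sum_{i,j}\int_{\mathbb T^N}(DA_s)_{ij}\bar m_j z_j w_i\,dx$. On the diagonal this misses a factor $\bar m_i$, so it is no longer $\mathcal B_D$. It is not even symmetric: swapping $\mathbf z$ and $\mathbf w$ changes it by $\int_{\mathbb T^N}(DA_s)_{12}(\bar m_2-\bar m_1)(z_2w_1-z_1w_2)\,dx$. So delete that remark; your Steps 1--2 are already computed with the correct, undivided operator.

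Similarly, only the form identity passes to $\mathcal H_0$. That subspace is not invariant under $\mathcal L_{\bar{\mathbf m},D}$, because $\int_{\mathbb T^N}\bar m_i(DA_s\boldsymbol\mu)_i\,dx$ need not vanish. Self-adjointness should therefore be asserted on all of $L^2(\bar{\mathbf m})$, as the statement does.
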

\begin{proof}
For $\mathbf z, \mathbf w\in\mathcal H_0$, an integration by parts gives
$$
\sum_{i=1}^2d_i
\int_{\mathbb T^N}
\left(
-\frac1{\bar m_i}
\nabla\cdot(\bar m_i\nabla z_i)
\right)
w_i\bar m_i\,dx
=d_i
\sum_{i=1}^2
\int_{\mathbb T^N}
\bar m_i\nabla z_i\cdot\nabla w_i\,dx.
$$
For the term involving the coupling, we similarly obtain
$$
\sum_{i,j=1}^2
\int_{\mathbb T^N}
d_iA_{ij}\bar m_j z_j w_i\bar m_i\,dx
=
\sum_{i,j=1}^2 
\int_{\mathbb T^N}
d_iA_{ij}\bar m_i\bar m_j w_i z_j\,dx.
$$
When $DA=A^TD$, this bilinear form is symmetric in $\mathbf z$ and $\mathbf w$, and hence the corresponding operator is symmetric. Because the form is bounded from below and its resolvent is compact on the torus, the Friedrichs extension is self-adjoint. Taking $\mathbf w=\mathbf z$ in the preceding identity yields the desired relation.
\end{proof}
Assume that the relation $DA = A^{T}D$ holds. We then introduce the principal spectral quantity
\[
\hat{\lambda}_1
=
\inf_{\mathbf z\in\mathcal H_0\setminus\{0\}}
\frac{\mathcal B_D[\mathbf z]}{\mathcal Z_D[\mathbf z]}.
\]

\begin{proposition}[Rayleigh characterization]\label{prop:rayleigh}
Assume $DA=A^TD$. Then $\lambda_1$ is the first eigenvalue of
$\mathcal L_{\bar{\mathbf m}}$ on the constraint space $\mathcal H_0$. Moreover, there exists a nonzero eigenfunction
$\mathbf z^{(1)}\in\mathcal H_0$ such that
$$
\mathcal L_{\bar{\mathbf m}}\mathbf z^{(1)}=\hat\lambda_1\mathbf z^{(1)},
$$
where
 \begin{align*}
\mathcal L_{\bar{\mathbf m}}\mathbf z:
=
-\frac1{\bar{\mathbf m}}
\nabla\cdot\bigl(D\bar{\mathbf m}\nabla \mathbf z\bigr)
+
A\,\bar{\mathbf m}\mathbf z.
\end{align*}
In addition, letting $\psi_i =\sqrt{\bar m_i}\,z_i  ,~i=1,2$, we find that the eigenvalue problem associated with  $\mathcal L_{\bar{\mathbf m}} $ can be rewritten as 
\begin{align}
\left\{\begin{array}{ll}
\displaystyle -{\Delta\psi_1}+\frac{1}{2}[{ f_1(\bar w_1^2,\bar w_2^2)-\lambda_1}]\psi_1+\sum_{j=1}^2\partial_{m_1}f_j(\bar w_1^2,\bar w_2^2)\bar w^2_j\psi_j=\hat \lambda\psi_1,&x\in\mathbb T^N,\\ \\
 \displaystyle -{\Delta\psi_2}+\frac{1}{2}[{  f_2(\bar w_1^2,\bar w_2^2)-\lambda_2}]\psi_2+\sum_{j=1}^2\partial_{m_2}f_j(\bar w_1^2,\bar w_2^2)\bar w^2_j\psi_j=\hat \lambda\psi_2,&x\in\mathbb T^N,
  \end{array}
  \right.
 \end{align}
with $(\psi _1,\psi_2)\in\mathcal  H_0$, which corresponds to the linearized eigenvalue problem associated with a semilinear system.
\end{proposition}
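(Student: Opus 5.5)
The plan is a direct-method argument for the constrained Rayleigh quotient, followed by an Euler--Lagrange computation and a ground-state (Doob) change of variables.

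\emph{Finiteness of $\hat\lambda_1$ and existence of a minimizer.} Since $A\in L^\infty$ and $\bar m_i$ is bounded above and below, we have
\[
\mathcal B_D[\mathbf z]\ge \mathcal G_D[\mathbf z]-C_A\,\mathcal Z_D[\mathbf z]
\]
with $C_A$ depending on $\|A\|_{L^\infty}$, $\max_i\|\bar m_i\|_{L^\infty}$ and $d_{\max}/d_{\min}$. Hence $\hat\lambda_1\ge -C_A>-\infty$.

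I will first minimize over the closure $\overline{\mathcal H_0}$ in $H^1$, i.e.\ drop the $L^\infty$ requirement. Because $L^\infty\cap H^1$ is dense there and $\mathcal B_D,\mathcal Z_D$ are continuous in $H^1$, the infimum is unchanged.

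Take a minimizing sequence normalized by $\mathcal Z_D[\mathbf z^k]=1$. The lower bound above then gives $\sup_k\mathcal G_D[\mathbf z^k]<\infty$. Since $\bar m_i$ is bounded above and below, $H^1(\bar m_i\,dx)$ coincides with $H^1(\mathbb T^N)$ with equivalent norms, so Rellich gives a subsequence with $\mathbf z^k\rightharpoonup\mathbf z^{(1)}$ in $H^1$ and $\mathbf z^k\to\mathbf z^{(1)}$ in $L^2$.

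Passing to the limit:
\begin{itemize}
\item the constraints $\int z_i\bar m_i\,dx=0$ and $\mathcal Z_D=1$ pass to the limit by strong $L^2$ convergence;
\item the potential term $\int\boldsymbol\mu^TDA_s\boldsymbol\mu\,dx$ is continuous under strong $L^2$ convergence;
\item $\mathcal G_D$ is weakly lower semicontinuous.
\end{itemize}
So $\mathbf z^{(1)}$ attains $\hat\lambda_1$. The membership $\mathbf z^{(1)}\in L^\infty$, and hence $\mathbf z^{(1)}\in\mathcal H_0$, will follow a posteriori from elliptic regularity (De Giorgi or bootstrap on the weakly coupled linear system with bounded coefficients).

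\emph{Euler--Lagrange equation and ``first eigenvalue''.} Differentiate $\mathcal B_D-\hat\lambda_1\mathcal Z_D$ at $\mathbf z^{(1)}$ in directions $\mathbf w\in\mathcal H_0$. For general $\mathbf w$, subtract the weighted means $\int w_i\bar m_i\,dx$; this introduces Lagrange multipliers $c_i$ for the two mass constraints. Using $DA_s=DA$ (from $DA=A^TD$) and Lemma~\ref{lemselfadjoint}, this yields
\[
-\frac1{\bar m_i}\nabla\cdot(d_i\bar m_i\nabla z_i)+d_i\sum_j A_{ij}\bar m_j z_j=\hat\lambda_1 d_i z_i+c_i .
\]
This is $\mathcal L_{\bar{\mathbf m}}\mathbf z^{(1)}=\hat\lambda_1\mathbf z^{(1)}$ in the $D$-weighted sense, after projecting onto $\mathcal H_0$, i.e.\ modulo constants. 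This projection is what ``eigenvalue on the constraint space'' should mean.

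Minimality is immediate: if $\mathcal L_{\bar{\mathbf m}}\mathbf z=\lambda\mathbf z$ with $\mathbf z\in\mathcal H_0\setminus\{0\}$, pair with $\mathbf z$ in the $D$-weighted $L^2(\bar{\mathbf m})$ product. The constants drop out by the mean-zero constraint, so $\lambda=\mathcal B_D[\mathbf z]/\mathcal Z_D[\mathbf z]\ge\hat\lambda_1$.

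\emph{Schrödinger form.} By \eqref{S1}, $\bar m_i=c\,e^{-\bar u_i}$. Set $\bar w_i=\sqrt{\bar m_i}$, so $\bar w_i^2=\bar m_i$ and $\psi_i=\bar w_i z_i$.
\begin{itemize}
\item A direct computation gives
\[
-\frac1{\bar m_i}\nabla\cdot(\bar m_i\nabla z_i)=\frac1{\bar w_i}\Big(-\Delta\psi_i+\frac{\Delta\bar w_i}{\bar w_i}\psi_i\Big).
\]
\item From $\bar w_i\propto e^{-\bar u_i/2}$ we get $\Delta\bar w_i/\bar w_i=\tfrac14|\nabla\bar u_i|^2-\tfrac12\Delta\bar u_i$.
\item By the ergodic HJB equation in \eqref{ss1-recalled}, this equals $\tfrac12\bigl(f_i(\bar w_1^2,\bar w_2^2)-\lambda_i\bigr)$.
\item Multiplying the eigen-equation by $\bar w_i$ turns the coupling into $A_{ij}\bar w_i\bar w_j\psi_j$. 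Using the symmetry $DA=A^TD$, this can be rewritten in the displayed form with $\partial_{m_i}f_j$.
\item The constraint becomes $\int\psi_i\bar w_i\,dx=0$.
\end{itemize}

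\emph{Expected obstacle.} The delicate point is the bookkeeping in the last two steps. First, the Lagrange multipliers $c_i$ must be correctly interpreted as a projection onto $\mathcal H_0$. Second, the weight $D$ must be tracked consistently: as written, $\mathcal L_{\bar{\mathbf m},D}=D\mathcal L_{\bar{\mathbf m}}$ only when the diffusion and coupling are both premultiplied by $D$. I would settle this by stating everything as the generalized eigenproblem associated with the pair $(\mathcal B_D,\mathcal Z_D)$.
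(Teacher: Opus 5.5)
Your proposal is correct and takes essentially the same route as the paper: the direct method for the Rayleigh quotient via the compact embedding (using that $\bar m_i$ is bounded above and below), then the Euler--Lagrange equation, then the substitution $\psi_i=\sqrt{\bar m_i}\,z_i$ with $\Delta\bar w_i/\bar w_i=\tfrac12(f_i-\lambda_i)$ from the ergodic HJB equation. Your extra care (minimizing over the $H^1$-closure, the multipliers $c_i$, the minimality argument, and tracking the weights $D$) fills in points the paper skips; note only that $DA=A^TD$ gives $\partial_{m_j}f_i=\tfrac{d_j}{d_i}\partial_{m_i}f_j$, so what you actually obtain is $\sum_j\partial_{m_j}f_i\,\bar w_i\bar w_j\psi_j$ (as in the paper's own final line), not literally the displayed $\partial_{m_i}f_j\,\bar w_j^2\psi_j$ form.
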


\begin{proof}
The form $\mathcal B_D$ is bounded from below because $DA_s$ is bounded and
$$
\left|
\int_{\mathbb T^N}
\boldsymbol\mu^TDA_s\boldsymbol\mu\,dx
\right|
\leq
Cd_{\max}\mathcal Z_D[\mathbf z].
$$
The embedding $H^1\hookrightarrow L^2$ is compact, since $\bar m_i$ is smooth and bounded above and below. Therefore, the direct method in the calculus of variations gives a minimizer of the Rayleigh quotient. The Euler-Lagrange equation for the constrained minimization problem is precisely
\begin{align}\label{Schrodinger_eigenvalue}
\mathcal L_{\bar{\mathbf m}}\mathbf z=\hat \lambda\mathbf z.
\end{align}

On the other hand, {noting that $\bar m_i>0$ and $\bar u_i$, $i=1,2$ are classical, we define $\bar w_i=\sqrt{\bar m_i}$ and $\displaystyle \bar m_i=\frac{e^{-\bar u_i}}{\int e^{-\bar u_i}dx}$, so that (\ref{ss1-recalled}) can be written as 
 \begin{align}
 \left\{
 \begin{array}{ll}
 -2\Delta \bar w_1+
 \bigl(
  f_1(\bar w_1^2,\bar w_2^2)-\lambda_1
 \bigr)\bar w_1=0,&x\in \mathbb T^N,\\
 -2\Delta \bar w_2+
 \bigl(
 f_2(\bar w_1^2,\bar w_2^2)-\lambda_2
 \bigr)\bar w_2=0,&x\in \mathbb T^N,\\
 \int_{\mathbb T^N} \bar w_1^2\,dx=\int_{\mathbb T^N} \bar w_2^2\,dx=1,
 \end{array}
  \right.
  \end{align}}
Moreover, define $\psi_i=\sqrt{\bar m_i}\,z_i,~i=1,2$, then we calculate
\begin{align*}
(\mathcal L_{\bar{\mathbf m}}\mathbf z)_i=&-\frac{\Delta\psi_i}{\bar w_i}+\frac{\Delta \bar w_i}{\bar w_i^2}\psi_i+\sum_{j=1}^2\partial_{m_i}f_j(\bar w_1^2,\bar w_2^2)\bar w_j\psi_j\\
=&-\frac{\Delta\psi_i}{\bar w_i}+\frac{ f_i(\bar w_1^2,\bar w_2^2)-\lambda_i}{2\bar w_i}\psi_i+\sum_{j=1}^2\partial_{m_j}f_i(\bar w_1^2,\bar w_2^2)\bar w_i\bar w_j\psi_j,
\end{align*}
which completes the proof of this lemma.
\end{proof}
\begin{theorem}\label{thmspeccharac}
Suppose that $DA=A^TD$. Then $\hat\lambda_1>0$ is equivalent to the validity of \eqref{matrixS2}. As a result, under the assumptions of Theorem~\ref{thm:twopopturnpike}, the strict positivity of $\hat\lambda_1$ provides a spectral sufficient condition for the local exponential turnpike property.
\end{theorem}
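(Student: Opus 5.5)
The equivalence will follow from Lemma~\ref{lemequi} together with the variational definition of $\hat\lambda_1$. Under the symmetry hypothesis $DA=A^TD$ we have $DA_s=DA$, and by Lemma~\ref{lemselfadjoint} the quadratic form of the self-adjoint operator is exactly $\mathcal B_D$. The substitution $\mu_i=\bar m_i z_i$ gives $\mathcal Z_D[\mathbf z]=\sum_i d_i\int \mu_i^2/\bar m_i\,dx$ and $\mathcal G_D[\mathbf z]=\sum_i d_i\int \bar m_i|\nabla(\mu_i/\bar m_i)|^2\,dx$. The mean-zero constraint $\int z_i\bar m_i\,dx=0$ in $\mathcal H_0$ is the same as $\int\mu_i\,dx=0$, and the regularity conditions match. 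So \eqref{matrixS2} is statement (a) of Lemma~\ref{lemequi}, and $\hat\lambda_1>0$ is statement (b) with $\eta'=\hat\lambda_1$, written in the $\mathbf z$-variables as in \eqref{lem:B-S2}.

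\textbf{Step 1 ((b) $\Rightarrow$ $\hat\lambda_1>0$).} Suppose \eqref{matrixS2} holds. Lemma~\ref{lemequi} (a)$\Rightarrow$(b) gives $\mathcal B_D[\mathbf z]\ge \eta'\mathcal Z_D[\mathbf z]$ with $\eta'=\eta/C_P$ for all $\mathbf z\in\mathcal H_0$. Dividing by $\mathcal Z_D[\mathbf z]>0$, which holds for $\mathbf z\neq0$, and taking the infimum gives $\hat\lambda_1\ge\eta/C_P>0$.

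\textbf{Step 2 ($\hat\lambda_1>0$ $\Rightarrow$ (b)).} Conversely, suppose $\hat\lambda_1>0$. By definition of the infimum, $\mathcal B_D[\mathbf z]\ge\hat\lambda_1\mathcal Z_D[\mathbf z]$ for every $\mathbf z\in\mathcal H_0$, which is (b) with $\eta'=\hat\lambda_1$. Lemma~\ref{lemequi} then yields (b)$\Rightarrow$(c)$\Rightarrow$(a), i.e.\ \eqref{matrixS2}. Tracking the constants gives $\eta''=\hat\lambda_1/M$ and $\eta=\eta''/(\eta''+\sqrt{d_{\max}/d_{\min}}\,\|A\|_{L^\infty})$.

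\textbf{Step 3 (the constraint $\eta\in(0,1)$).} One must check that this $\eta$ lies in $(0,1)$, as required in \eqref{matrixS2}. If $A\not\equiv0$ this is automatic. If $A\equiv0$, then any smaller positive $\eta$ also works, since decreasing $\eta$ only weakens the right-hand side.

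\textbf{Step 4 (attainment and the turnpike consequence).} Proposition~\ref{prop:rayleigh} shows the infimum is attained, so $\hat\lambda_1$ is a genuine eigenvalue. This is not needed for the equivalence, but it explains why strict positivity is the sharp condition. The final assertion is then immediate: $\hat\lambda_1>0$ gives \eqref{matrixS2}, and together with \eqref{S1}, \eqref{S2} Theorem~\ref{thm:twopopturnpike} yields the local exponential turnpike estimate.

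\textbf{Main obstacle.} The only delicate point is confirming that the admissible class in the Rayleigh quotient ($\mathcal H_0$) coincides with the class used in \eqref{matrixS2} and Lemma~\ref{lemequi}. Once $\mathbf z\leftrightarrow\boldsymbol\mu$ is identified via $\mu_i=\bar m_i z_i$, both require $L^\infty$ bounds, weighted $H^1$ regularity and mean zero, so the correspondence is exact. I would verify this explicitly, using that $\bar m_i$ is bounded above and below, so that $\mu_i\in L^1\cap L^\infty$ if and only if $z_i\in L^\infty$.
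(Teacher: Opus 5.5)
Your proof is correct and follows the paper's argument. The direction \eqref{matrixS2}$\Rightarrow\hat\lambda_1\ge\eta/C_P>0$ is exactly the paper's weighted-Poincar\'e step. For the converse you cite the chain (b)$\Rightarrow$(c)$\Rightarrow$(a) of Lemma~\ref{lemequi}, whereas the paper redoes the same interpolation directly with a dichotomy on $r_D=\mathcal G_D/\mathcal Z_D$ at threshold $2C_A+1$. That direct version gives $\eta=\min\{\hat\lambda_1/(2C_A+1),1/2\}\in(0,1)$ automatically and avoids the detour through the unweighted $L^2$ norm, so your Step~3 check is not needed there; otherwise the two routes are interchangeable.
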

\begin{proof}
Assume first that \eqref{matrixS2} holds. Then, by Lemma~\ref{lem:B-S2}, we have
$$
\mathcal B_D[\mathbf z]\geq \eta\mathcal G_D[\mathbf z].
$$
Using the weighted Poincaré inequalities, we obtain
$$
\mathcal Z_D[\mathbf z]\leq C_P\mathcal G_D[\mathbf z],
\qquad
C_P =\max\{C_{P,1},C_{P,2}\}.
$$
Consequently, we have 
$$
\mathcal B_D[\mathbf z]
\geq
\frac{\eta}{C_P}\mathcal Z_D[\mathbf z],
$$
which implies that $\hat \lambda_1\geq \frac{\eta}{C_P}>0$. Conversely, suppose that $\hat \lambda_1>0$. Since $DA_s$ and $\bar m_i$ are bounded,
there exists a constant $C_A>0$ such that
$$
\left|
\int_{\mathbb T^N}
\boldsymbol\mu^TDA_s\boldsymbol\mu\,dx
\right|
\leq
C_A\mathcal Z_D[\mathbf z].
$$
We define $
r_D(\mathbf z) =\frac{\mathcal G_D[\mathbf z]}{\mathcal Z_D[\mathbf z]}$. By the Poincar\'{e} inequality, we get  $ r_D(\mathbf z)\geq \frac{d_{\min}}{d_{\max}C_P}$. If $r_D(\mathbf z)\leq 2C_A+1$, then
$$
\frac{\mathcal B_D[\mathbf z]}{\mathcal G_D[\mathbf z]}
=
\frac{\mathcal B_D[\mathbf z]}{\mathcal Z_D[\mathbf z]}
\frac{\mathcal Z_D[\mathbf z]}{\mathcal G_D[\mathbf z]}
\geq
\frac{\hat \lambda_1}{2C_A+1}.
$$
On the other hand, if $r_D(\mathbf z)>2C_A+1$, then
$$
\mathcal B_D[\mathbf z]
\geq
\mathcal G_D[\mathbf z]-C_A\mathcal Z_D[\mathbf z]
=
\left(1-\frac{C_A}{r_D(\mathbf z)}\right)\mathcal G_D[\mathbf z]
\geq
\frac12\mathcal G_D[\mathbf z].
$$
In either case, we conclude that $\displaystyle \mathcal B_D[\mathbf z]\geq \eta\mathcal G_D[\mathbf z]$, where $\displaystyle
\eta = \min\left\{
\frac{\hat\lambda_1}{2C_A+1},
\frac12
\right\}>0$. By Lemma~\ref{lem:B-S2}, this is precisely \eqref{matrixS2}. This completes the proof.
\end{proof}
We now state the result for the case of non-symmetric couplings.
\begin{corollary}\label{propnonsymspectral}
If $DA\neq A^TD$, define the following Rayleigh quotient
$$
\hat \lambda_1^s = \inf_{\mathbf z\in\mathcal H_0\setminus\{0\}}
\frac{
\mathcal G_D[\mathbf z]
+
\int_{\mathbb T^N}
\boldsymbol\mu^TDA_s(x)\boldsymbol\mu\,dx
}{
\mathcal Z_D[\mathbf z]
}.
$$
Then, we have  $\hat \lambda_1^s>0$ is equivalent to  \eqref{matrixS2} holds.
\end{corollary}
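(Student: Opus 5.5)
The plan is to reproduce the argument of Theorem~\ref{thmspeccharac} verbatim, noting that it never used the symmetry $DA=A^TD$ except to identify $\hat\lambda_1$ with an eigenvalue of a self-adjoint operator. The key algebraic observation comes first. For any real $2\times2$ matrix $M$ and any $\boldsymbol\mu\in\mathbb R^2$, one has $\boldsymbol\mu^T M\boldsymbol\mu=\boldsymbol\mu^T\frac{M+M^T}{2}\boldsymbol\mu$. Applying this to $M=DA(x)$ gives the pointwise identity $\boldsymbol\mu^T DA\boldsymbol\mu=\boldsymbol\mu^TDA_s\boldsymbol\mu$. Hence the skew part $\frac{DA-A^TD}{2}$ contributes nothing to the quadratic form. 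The numerator of $\hat\lambda_1^s$ is therefore exactly $\mathcal B_D[\mathbf z]$, and \eqref{matrixS2} is equivalent to \eqref{lem:B-S2}, both involving only $DA_s$. So $\hat\lambda_1^s$ is simply the infimum of $\mathcal B_D/\mathcal Z_D$ over $\mathcal H_0\setminus\{0\}$, whatever the symmetry of $DA$.

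\textbf{Forward direction.} Assume \eqref{matrixS2}. Through \eqref{lem:B-S2} we have $\mathcal B_D\ge\eta\mathcal G_D$. The weighted Poincaré inequalities \eqref{S2}, applied componentwise with the mean-zero constraint $\int z_i\bar m_i=0$ built into $\mathcal H_0$, give $\mathcal Z_D\le C_P\mathcal G_D$. Combining the two yields $\hat\lambda_1^s\ge\eta/C_P>0$.

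\textbf{Converse direction.} Assume $\hat\lambda_1^s>0$. The boundedness of $A$ (since $\mathbf f\in C^2$ near the compact range of $\bar{\mathbf m}$) and of $\bar m_i$ gives $\bigl|\int\boldsymbol\mu^TDA_s\boldsymbol\mu\bigr|\le C_A\mathcal Z_D$. I would then split according to the ratio $r_D=\mathcal G_D/\mathcal Z_D$.
\begin{itemize}
\item If $r_D\le 2C_A+1$, then $\mathcal B_D/\mathcal G_D\ge\hat\lambda_1^s/(2C_A+1)$.
\item If $r_D>2C_A+1$, then $\mathcal B_D\ge\mathcal G_D-C_A\mathcal Z_D\ge\frac12\mathcal G_D$.
\end{itemize}
This gives \eqref{lem:B-S2} with $\eta=\min\{\hat\lambda_1^s/(2C_A+1),1/2\}$. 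Since \eqref{matrixS2} requires $\eta\in(0,1)$, the value $1/2$ is admissible. Lemma~\ref{lemequi} then transfers the inequality back to the $\boldsymbol\mu$-formulation.

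\textbf{Main point to check.} The main subtlety is not analytic but a matter of bookkeeping. One must make sure the constraint classes match: $\mathbf z\in\mathcal H_0$ corresponds exactly to $\boldsymbol\mu=\bar{\mathbf m}\mathbf z$ with zero mass, $L^\infty$ bounds and $\mu_i/\bar m_i\in W^{1,2}(\bar m_i\,dx)$, which holds because $\bar m_i$ is bounded above and below. One must also avoid any appeal to self-adjointness or to the existence of a minimizer. The equivalence uses only the infimum itself, so no Euler--Lagrange equation is needed, and the nonsymmetric operator $\mathcal L_{\bar{\mathbf m}}$, whose spectrum may be complex, plays no role.
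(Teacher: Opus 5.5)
Your proposal is correct and follows the paper's proof. The paper likewise notes that $\boldsymbol\mu^T DA_{\rm sk}\boldsymbol\mu=0$, so only $DA_s$ enters the quadratic form, and then reruns the argument of Theorem~\ref{thmspeccharac}: the weighted Poincaré inequality for the forward direction, and the case split on $r_D=\mathcal G_D/\mathcal Z_D$ versus $2C_A+1$ for the converse. Your remarks that no minimizer or self-adjointness is needed, and that $\mathcal H_0$ matches the admissible class in \eqref{matrixS2} under $\mu_i=\bar m_i z_i$, are accurate details the paper leaves implicit.
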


\begin{proof}
The proof is identical to that of Theorem~\ref{thmspeccharac}. Since we have $
\boldsymbol\mu^TDA_{\rm sk}\boldsymbol\mu=0$. for every skew-symmetric matrix $DA_{\rm sk}=\frac{DA-A^TD}{2}$, the associated quadratic form depends only on the symmetric part $DA_s$.
\end{proof}

\section{Second-Order Variational Analysis of Local Minimizers in Two-Population Potential MFGs}\label{secpotentialenergy}

In this section, we shall discuss the relation between \eqref{matrixS2} and the second variation of the variational structure associated with the pontential two-population MFGs.  We remark that the existence of global and local minimizers for mean-field game systems with non-monotone couplings has been established in several works, see e.g. \cite{cesaroni2018concentration,cirant2025critical,kong2026blow,kong2026local}.  

Assume that the coupling is of potential type, namely, there exists a function $F\in C^3((0,\infty)^2)$ such that $f_i(m_1,m_2)=\partial_{m_i}F(m_1,m_2)$,
$i=1,2$. Then, we focus on the following constrained energy
\begin{align}\label{energyMultiple}
\mathcal E(m_1,m_2,w_1,w_2)
=&
\sum_{i=1}^2
\int_{\mathbb T^N}
\frac{|w_i|^2}{2m_i}\, dx
+
\int_{\mathbb T^N}F(m_1,m_2)\,dx
\end{align}
over  the admissible set 
\[
\mathcal A  =
\left\{
(m_i,w_i)\in (W^{1,\infty}(\mathbb{T}^N)\times L^\infty(\mathbb   T^N) )^2 | 
 -\int_{\mathbb{R}^N} \nabla m_i \cdot \nabla \varphi\,dx
+\int_{\mathbb{R}^N} w_i \cdot \nabla \varphi\,dx =0,
\right.
\]
\[
\left.
 \forall \varphi\in C_c^\infty(\mathbb{R}^N),  
\int_{\mathbb{R}^N} m_i\,dx =1, 
m_i> 0   
\right\}.
\]
 Noting that $m_i$ and $w_i$ are periodic, the energy $\mathcal E(m_1,m_2,w_1,w_2)$ given in (\ref{energyMultiple}) can be written as 
\begin{align}\label{energyMultiple1}
\mathcal E(m_1,m_2)
=&
\sum_{i=1}^2
\int_{\mathbb T^N}
\frac{|\nabla m_i|^2}{2m_i}\, dx
+
\int_{\mathbb T^N}F(m_1,m_2)\,dx\nonumber\\
=&
2\sum_{i=1}^2\int_{\mathbb T^N}
 |\nabla \sqrt{m_i}|^2 \,dx+
\int_{\mathbb T^N}F(m_1,m_2)\,dx.
\end{align}
over the admissible set
\begin{align}
\mathcal A =\{\boldsymbol{m}=(m_1,m_2)\in (W^{1,\infty}(\mathbb T^N))^2|~m_i>0,~\int_{\mathbb T^N}m_i\,dx=1,i=1,2\}.
\end{align}

\begin{lemma}[First variation]\label{lem:first-variation}
Let $\bar{\mathbf m}\in (W^{1,\infty}(\mathbb T^N))^2$ be  positive, and assume
$$
\mu_i\in H^1 (\mathbb T^N)\cap L^\infty(\mathbb T^N),
\qquad
\int_{\mathbb T^N}\mu_i\,dx=0.
$$
Then
$$
\nabla\mathcal E(\bar{\mathbf m})[D\boldsymbol\mu]
=
\sum_{i=1}^2
\int_{\mathbb T^N}
\left[
-\Delta\log\bar m_i
-\frac12|\nabla\log\bar m_i|^2
+
\partial_{m_i}F(\bar{\mathbf m})
\right]d_i\mu_i\,dx,
$$
\end{lemma}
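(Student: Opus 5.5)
The plan is to compute the G\^ateaux derivative directly from the form \eqref{energyMultiple1} of the energy. Write $\mathcal E(\mathbf m)=\sum_{i}\int_{\mathbb T^N}\frac{|\nabla m_i|^2}{2m_i}\,dx+\int_{\mathbb T^N}F(\mathbf m)\,dx$. Set $\mathbf m^s=\bar{\mathbf m}+sD\boldsymbol\mu$, that is, $m_i^s=\bar m_i+s d_i\mu_i$. The quantity to identify is $\frac{d}{ds}\mathcal E(\mathbf m^s)\big|_{s=0}$.

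\textbf{Admissibility.} First I would check that $\mathbf m^s$ is an admissible perturbation for $|s|$ small.
\begin{itemize}
\item Since $\bar m_i\in W^{1,\infty}(\mathbb T^N)$ is positive on the compact torus, we have $\bar m_i\ge c>0$.
\item Since $\mu_i\in L^\infty$, for $|s|\le c/(2d_{\max}\max_i\|\mu_i\|_{L^\infty})$ we get $m_i^s\ge c/2$.
\item The mass constraint is preserved because $\int\mu_i\,dx=0$.
\item The values $\mathbf m^s(x)$ stay in a compact subset of $(0,\infty)^2$, on which $F\in C^3$ has bounded derivatives.
\end{itemize}
(Strictly, $\mu_i\in H^1$ rather than $W^{1,\infty}$, but the energy is well defined for such $m_i^s$, and this is all that is needed.)

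\textbf{Differentiating under the integral.} The integrand $s\mapsto \frac{|\nabla\bar m_i+sd_i\nabla\mu_i|^2}{2(\bar m_i+sd_i\mu_i)}+F(\mathbf m^s)$ is smooth in $s$. Its $s$-derivative is dominated, uniformly for small $|s|$, by
\[
C\bigl(|\nabla\bar m_i|^2+|\nabla\mu_i|^2+|\mu_i|\bigr),
\]
which is in $L^1$ because $\nabla\bar m_i\in L^\infty$, $\mu_i\in H^1\cap L^\infty$, and $m_i^s\ge c/2$. Dominated convergence then gives
\[
\frac{d}{ds}\mathcal E(\mathbf m^s)\Big|_{s=0}
=\sum_{i=1}^2\int_{\mathbb T^N}\Bigl[\frac{\nabla\bar m_i\cdot\nabla(d_i\mu_i)}{\bar m_i}-\frac{|\nabla\bar m_i|^2}{2\bar m_i^2}\,d_i\mu_i+\partial_{m_i}F(\bar{\mathbf m})\,d_i\mu_i\Bigr]dx .
\]

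\textbf{Rewriting in the stated form.} Next use $\nabla\bar m_i/\bar m_i=\nabla\log\bar m_i$, so the second term is $-\frac12|\nabla\log\bar m_i|^2 d_i\mu_i$. Integrating the first term by parts on the torus (no boundary terms) gives $\int(-\Delta\log\bar m_i)\,d_i\mu_i\,dx$. This produces exactly the expression in Lemma~\ref{lem:first-variation}.

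\textbf{Main obstacle.} The only delicate point is regularity in this last step. With $\bar m_i\in W^{1,\infty}$ only, $\log\bar m_i\in W^{1,\infty}$ and $\Delta\log\bar m_i$ is merely a distribution in $H^{-1}$. I would therefore interpret $\int(-\Delta\log\bar m_i)\mu_i\,dx$ as the duality pairing $\langle -\Delta\log\bar m_i,\mu_i\rangle_{H^{-1},H^1}=\int\nabla\log\bar m_i\cdot\nabla\mu_i\,dx$, which is legitimate since $\mu_i\in H^1$. When $\bar m_i$ is the classical stationary density ($C^{2+\theta}$, as assumed in Section~\ref{secpre}), this pairing is an honest integral and the formula holds pointwise. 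I would add a remark that the bracket equals $\partial_{m_i}F-\lambda_i$ type terms only at critical points, which is not needed here.
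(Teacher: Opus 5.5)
Your proposal is correct and follows the paper's proof. Both set $m_i^\varepsilon=\bar m_i+\varepsilon d_i\mu_i$, differentiate the Fisher-information and $F$ terms, and integrate $\int_{\mathbb T^N}\nabla\log\bar m_i\cdot\nabla\mu_i\,dx$ by parts; your extra justifications (the uniform lower bound on $m_i^s$, dominated convergence, and reading $\int_{\mathbb T^N}(-\Delta\log\bar m_i)\mu_i\,dx$ as an $H^{-1}$--$H^1$ pairing when $\bar m_i$ is only $W^{1,\infty}$) fill in details the paper leaves implicit.
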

where $D=\text{diag}\{d_1,d_2\}$.

\begin{proof}
For the Fisher-information part,
$$
\mathcal I_i(m_i)
 =
\int_{\mathbb T^N}\frac{|\nabla m_i|^2}{2m_i}\,dx.
$$
Taking $m_i^\varepsilon=\bar m_i+\varepsilon d_i\mu_i$ with $\varepsilon$ small enough, one obtains $m_i^{\varepsilon}>0$ a.e. and
$$
\frac{d}{d\varepsilon}\bigg|_{\varepsilon=0}
\mathcal I_i(m_i^\varepsilon)
=
{d_i}\int_{\mathbb T^N}
\left(
\frac{\nabla\bar m_i\cdot\nabla\mu_i}{\bar m_i}
-
\frac{|\nabla\bar m_i|^2}{2\bar m_i^2}\mu_i
\right)dx.
$$
Integrating the first term by parts gives
$$
\frac{d}{d\varepsilon}\bigg|_{\varepsilon=0}
\mathcal I_i(m_i^\varepsilon)
=
\int_{\mathbb T^N}
\left(
-\Delta\log\bar m_i
-\frac12|\nabla\log\bar m_i|^2
\right){d_i}\mu_i\,dx.
$$
Combining this with  the variations of $\int_{\mathbb T^N}F(m_1,m_2)\,dx$, we finish the proof of this lemma.
\end{proof}

\begin{lemma}\label{lem:critical-stationary}
Let $\bar{\mathbf m}$ be a constrained critical point of the functional $\mathcal E$ over $\mathcal A$. Define
$$
\bar u_i  = -\log \bar m_i + c_i,
$$
for some constants $c_i \in \mathbb R$. Then there exist constants $\lambda_i \in \mathbb R$ such that
$$
(\bar u_1, \bar u_2, \bar m_1, \bar m_2, \lambda_1, \lambda_2)
$$
constitutes a classical solution of \eqref{ss1-recalled}. Conversely, any stationary classical solution satisfying \eqref{S1} corresponds to a constrained critical point of $\mathcal E$.
\end{lemma}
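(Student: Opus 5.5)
The plan is to write the Euler--Lagrange system for a constrained critical point and match it with \eqref{ss1-recalled} through the substitution $\bar u_i=-\log\bar m_i+c_i$. Lemma~\ref{lem:first-variation} is applied with independent zero-mean variations $\mu_1,\mu_2$; taking one of them zero at a time decouples the $d_i$. Criticality under the mass constraints then means that, for each $i$,
\[
\int_{\mathbb T^N}\Bigl[-\Delta\log\bar m_i-\tfrac12|\nabla\log\bar m_i|^2+\partial_{m_i}F(\bar{\mathbf m})\Bigr]\mu_i\,dx=0
\quad\text{for all admissible }\mu_i\text{ with }\int_{\mathbb T^N}\mu_i\,dx=0 .
\]
Since these $\mu_i$ are dense in the zero-mean subspace of $L^2$, the bracket equals a constant, which we call $\lambda_i$ (the Lagrange multiplier). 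At first this holds in the distributional sense.

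The key identity comes from setting $\log\bar m_i=c_i-\bar u_i$. Then $-\Delta\log\bar m_i=\Delta\bar u_i$ and $|\nabla\log\bar m_i|^2=|\nabla\bar u_i|^2$, so the Euler--Lagrange equation reads
\[
\Delta\bar u_i-\tfrac12|\nabla\bar u_i|^2+f_i(\bar{\mathbf m})=\lambda_i .
\]
This is the Hamilton--Jacobi equation of \eqref{ss1-recalled} once the sign convention is matched; if necessary, the constant is renamed $-\lambda_i$, and the orientation of the Laplacian term is checked against the paper's convention for the HJB line. The Fokker--Planck equation is automatic. Indeed, $\nabla\bar m_i=-\bar m_i\nabla\bar u_i$ is exactly \eqref{S1}, so $\Delta\bar m_i+\nabla\cdot(\bar m_i\nabla\bar u_i)=\nabla\cdot(\nabla\bar m_i+\bar m_i\nabla\bar u_i)=0$. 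The normalizations $\int\bar m_i=1$ come from the constraint set $\mathcal A$.

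The main obstacle is regularity: the critical point is only assumed to lie in $W^{1,\infty}$, but a classical solution is required. The plan is to pass to $\bar w_i=\sqrt{\bar m_i}$, in which the Euler--Lagrange equation becomes the semilinear elliptic system
\[
-2\Delta\bar w_i+\bigl(f_i(\bar w_1^2,\bar w_2^2)-\lambda_i\bigr)\bar w_i=0,
\]
already written in the proof of Proposition~\ref{prop:rayleigh}. Its weak form follows from the Fisher-information expression $2\int|\nabla\sqrt{m_i}|^2$ in \eqref{energyMultiple1}. Here $\bar w_i\in W^{1,\infty}$ and $\bar w_i\ge c>0$, because $\bar m_i>0$ is continuous on the compact torus. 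Moreover $f_i\in C^2$ near the range, so the zeroth-order coefficient is Lipschitz. Elliptic $W^{2,p}$ estimates followed by Schauder estimates then give $\bar w_i\in C^{2,\theta}$, and a further bootstrap gives $\bar m_i\in C^{2+\theta}$ and $\bar u_i=-\log\bar m_i+c_i\in C^{3+\theta}$ as far as the regularity of $f$ allows. Positivity and smoothness of $\bar w_i$ justify undoing the substitution pointwise.

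For the converse, start from a classical stationary solution satisfying \eqref{S1}. Integrating \eqref{S1} gives $\bar u_i=-\log\bar m_i+c_i$. Substituting into the HJB equation shows that the bracket in Lemma~\ref{lem:first-variation} equals the constant $\lambda_i$. The first variation in any zero-mean direction is then $d_i\lambda_i\int\mu_i=0$, so $\bar{\mathbf m}$ is a constrained critical point of $\mathcal E$ on $\mathcal A$.
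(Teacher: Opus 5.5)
Your proposal is correct and follows the paper's route: the Euler--Lagrange equation with a constant Lagrange multiplier from Lemma~\ref{lem:first-variation}, elliptic regularity (which you helpfully spell out via $\bar w_i=\sqrt{\bar m_i}$ where the paper only cites standard theory), the substitution $\bar u_i=-\log\bar m_i+c_i$, the Fokker--Planck equation from \eqref{S1}, and the converse by reversing the computation. Your hedge about signs is unnecessary: $\Delta\bar u_i-\tfrac12|\nabla\bar u_i|^2+f_i(\bar{\mathbf m})=\lambda_i$ rearranges exactly to $-\Delta\bar u_i+\tfrac12|\nabla\bar u_i|^2+\lambda_i=f_i(\bar{\mathbf m})$, which is the HJB line of \eqref{ss1-recalled} with the same $\lambda_i$, so no renaming is needed.
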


\begin{proof}
Since $\bar{\mathbf m}$ is a constrained critical point, Lemma~\ref{lem:first-variation} implies the existence of constants $\lambda_i$ such that
$$
-\Delta\log\bar m_i
-\frac12|\nabla\log\bar m_i|^2
+
\partial_{m_i}F(\bar{\mathbf m})
=
\lambda_i.
$$
By the standard elliptic regularity result, we have $\bar m_i$, $i=1,2$ are classical.
Using $\bar u_i=-\log\bar m_i+c_i$, this becomes
$$
-\Delta\bar u_i+\frac12|\nabla\bar u_i|^2+\lambda_i
=
 \partial_{m_i}F(\bar{\mathbf m}).
$$
Moreover, we have $\nabla\bar m_i=-\bar m_i\nabla\bar u_{i}$; hence we get 
$$
\Delta\bar m_i+\nabla\cdot(\bar m_i\nabla\bar u_i)=0.
$$
Thus \eqref{ss1-recalled} holds. The converse follows by reversing the same computation.
\end{proof}

\begin{lemma}\label{lem:second-variation}
Let $\mu_i\in L^\infty(\mathbb T^N)$, $\int_{\mathbb T^N}\mu_i\,dx=0$. Then, we have
$$
\nabla^2\mathcal E(\bar{\mathbf m})[D\boldsymbol\mu,\boldsymbol\mu]
=
\sum_{i=1}^2
\int_{\mathbb T^N} d_i
\bar m_i
\left|
\nabla\left(\frac{\mu_i}{\bar m_i}\right)
\right|^2dx
+
\int_{\mathbb T^N}
\boldsymbol\mu^T
D\nabla_{\mathbf m}^2F(\bar{\mathbf m})
\boldsymbol\mu\,dx.
$$
\end{lemma}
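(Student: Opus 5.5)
Compute the second variation directly, component by component, along the curve $m_i^\varepsilon=\bar m_i+\varepsilon d_i\mu_i$, mixing it with the direction $\boldsymbol\mu$; this is the convention of Lemma~\ref{lem:first-variation}. Concretely, I would compute
\[
\frac{\partial^2}{\partial\varepsilon\,\partial\delta}\bigg|_{\varepsilon=\delta=0}\mathcal E\bigl(\bar{\mathbf m}+\varepsilon D\boldsymbol\mu+\delta\boldsymbol\mu\bigr).
\]
Since $\bar m_i$ is positive and continuous on the compact torus and $\mu_i\in L^\infty$, the perturbed densities stay positive for small $\varepsilon,\delta$. Mass is preserved because $\int\mu_i=0$, so the curve remains in $\mathcal A$. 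This lets me differentiate under the integral sign.

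I split $\mathcal E$ into the Fisher-information parts $\mathcal I_i$ and the potential part $\int F$.

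For the potential part, Taylor expansion of $F\in C^3$ gives the mixed derivative
\[
\int_{\mathbb T^N}\sum_{i,j}\partial^2_{m_im_j}F(\bar{\mathbf m})\,d_i\mu_i\mu_j\,dx=\int_{\mathbb T^N}\boldsymbol\mu^TD\nabla^2_{\mathbf m}F(\bar{\mathbf m})\boldsymbol\mu\,dx.
\]
This is exactly the second term of the claimed identity.

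For the Fisher part, the map $(m,\nabla m)\mapsto|\nabla m|^2/(2m)$ is the perspective of a quadratic, so its second derivative in the direction $(\mu,\nabla\mu)$ is
\[
\frac{|\nabla\mu|^2}{\bar m}-\frac{2\nabla\bar m\cdot\nabla\mu\,\mu}{\bar m^2}+\frac{|\nabla\bar m|^2\mu^2}{\bar m^3}=\frac{1}{\bar m}\Bigl|\nabla\mu-\frac{\mu}{\bar m}\nabla\bar m\Bigr|^2=\bar m\Bigl|\nabla\Bigl(\frac{\mu}{\bar m}\Bigr)\Bigr|^2.
\]
The last equality is the identity $\nabla(\mu/\bar m)=(\nabla\mu-\mu\nabla\bar m/\bar m)/\bar m$. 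Because $\mathcal I_i$ depends only on $m_i$, the mixed derivative in the directions $d_i\mu_i$ and $\mu_i$ is $d_i$ times this quantity. Summing over $i$ gives the first term. No cross terms appear between $i=1$ and $i=2$, since the Fisher part is decoupled; all coupling comes through $\nabla^2F$.

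The main technical point is regularity. The statement assumes only $\mu_i\in L^\infty$, so I would interpret the gradient term via $\mu_i/\bar m_i\in W^{1,2}(\bar m_i\,dx)$. The identity then holds in the extended sense, with the value $+\infty$ if that weighted gradient is not finite. The smooth case I would justify by dominated convergence, using $\bar m_i\ge c>0$ and $\bar m_i\in W^{1,\infty}$. The general $H^1$ case I would obtain by density of smooth zero-mean functions, noting that both sides are continuous in the $H^1$ norm for fixed bounded $\bar{\mathbf m}$.

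Finally, I would note that at a critical point the constraint (Lagrange multiplier) terms contribute nothing. They are linear in $m$, so they have no second derivative. This is why the Hessian restricted to zero-mean directions coincides with the expression above. The only step needing care is the algebraic completion of the square in the Fisher term; everything else is routine.
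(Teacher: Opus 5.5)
Your proposal is correct and follows essentially the same route as the paper: you split $\mathcal E$ into the decoupled Fisher-information terms and $\int F$, read off $\boldsymbol\mu^TD\nabla^2_{\mathbf m}F(\bar{\mathbf m})\boldsymbol\mu$ from the potential part, and identify the Fisher second variation as $d_i\int\bar m_i|\nabla(\mu_i/\bar m_i)|^2$. The only cosmetic difference is in that last step: the paper uses the multiplicative parametrization $\bar m_i(1+\varepsilon d_i z_i)$, under which $|\nabla m|^2/(2m)$ is affine in $\varepsilon$ except for the term $\varepsilon^2 d_i^2\bar m_i|\nabla z_i|^2/(2(1+\varepsilon d_i z_i))$, whereas you complete the square in the Hessian of $|p|^2/(2m)$ (and your mixed derivative in the directions $D\boldsymbol\mu$ and $\boldsymbol\mu$ is the cleaner reading of $[D\boldsymbol\mu,\boldsymbol\mu]$).
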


\begin{proof}
Define $z_i = \frac{\mu_i}{\bar m_i},$
so that $\bar m_i + \varepsilon {d_i}\mu_i
=
\bar m_i(1 + \varepsilon {d_i}z_i).
$
A straightforward computation yields

$$
\nabla^2\mathcal I_i(\bar m_i)[d_i\mu_i,\mu_i]
=
d_i\int_{\mathbb T^N} \bar m_i|\nabla z_i|^2\,dx.
$$
Combining this with  the second variation of $\int_{\mathbb T^N}F(m_1,m_2)\,dx$, we finish the proof of this lemma.
\end{proof}

\begin{corollary}\label{corminimizerS2}
Assume that $f_i=\partial_{m_i}F$. Then, we have  $A_s(x)=D_{\mathbf m}^2F(\bar{\mathbf m}(x))$. Consequently, the nondegeneracy condition
\begin{align}\label{nondegneracy-variation}
\nabla^2\mathcal E(\bar{\mathbf m})[D\boldsymbol\mu,\boldsymbol\mu]
\geq
\eta
\sum_{i=1}^2
\int_{\mathbb T^N} d_i
\bar m_i
\left|
\nabla\left(\frac{\mu_i}{\bar m_i}\right)
\right|^2dx
\end{align}
for all mass-preserving $\boldsymbol\mu$ is exactly \eqref{matrixS2}.
\end{corollary}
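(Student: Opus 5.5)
Since $f_i=\partial_{m_i}F$, the Jacobian $A(x)=D_{\mathbf m}\mathbf f(\bar{\mathbf m}(x))$ is the Hessian $\nabla^2_{\mathbf m}F(\bar{\mathbf m}(x))$. Because $F\in C^3$, the mixed partials commute, so $A$ is symmetric. This gives $A_s=\tfrac12(A+A^T)=A=D^2_{\mathbf m}F(\bar{\mathbf m})$, which is the first claim.

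For the weighted form I will treat $D$ as diagonal and then compute directly. The quantity entering \eqref{matrixS2} is $\boldsymbol\mu^T DA_s\boldsymbol\mu$ with $DA_s=\tfrac12(DA+A^TD)$. Since $A=A^T$,
\[
\boldsymbol\mu^T A^TD\boldsymbol\mu=(D\boldsymbol\mu)^TA\boldsymbol\mu=\boldsymbol\mu^T A D\boldsymbol\mu .
\]
As a scalar this equals its own transpose, $\boldsymbol\mu^T D A\boldsymbol\mu$. Hence $\boldsymbol\mu^TDA_s\boldsymbol\mu=\boldsymbol\mu^TD\nabla^2_{\mathbf m}F(\bar{\mathbf m})\boldsymbol\mu$ pointwise in $x$, even though $DA$ itself need not be symmetric.

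Next I invoke Lemma~\ref{lem:second-variation}. It expresses $\nabla^2\mathcal E(\bar{\mathbf m})[D\boldsymbol\mu,\boldsymbol\mu]$ as
\[
\sum_i d_i\int\bar m_i|\nabla(\mu_i/\bar m_i)|^2\,dx+\int\boldsymbol\mu^TD\nabla^2_{\mathbf m}F(\bar{\mathbf m})\boldsymbol\mu\,dx .
\]
By the identity above, this is exactly the left-hand side of \eqref{matrixS2}. The right-hand sides of \eqref{nondegneracy-variation} and \eqref{matrixS2} are literally the same expression. The two inequalities therefore coincide for each admissible $\boldsymbol\mu$, with the same $\eta$.

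The only point needing care is the class of test perturbations. Lemma~\ref{lem:second-variation} is stated for $\mu_i\in L^\infty$ with zero mean, while \eqref{matrixS2} requires in addition $\mu_i/\bar m_i\in W^{1,2}(\bar m_i\,dx)\cap L^\infty$. I will take the admissible class of ``mass-preserving $\boldsymbol\mu$'' in the corollary to be the one in \eqref{matrixS2}, for which both sides are finite. The Lemma's computation is valid there because $\bar m_i$ is smooth and bounded below, so $z_i\in H^1$. With this convention the equivalence is immediate.
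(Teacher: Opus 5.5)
Your argument is correct and is essentially the paper's: symmetry of the mixed partials gives $A=A^T=D^2_{\mathbf m}F(\bar{\mathbf m})$, and the equivalence is then read off from Lemma~\ref{lem:second-variation}. You also check pointwise that $\boldsymbol\mu^T DA_s\boldsymbol\mu=\boldsymbol\mu^T D\nabla^2_{\mathbf m}F(\bar{\mathbf m})\boldsymbol\mu$, even though $\tfrac12(DA+A^TD)\neq D\nabla^2_{\mathbf m}F$ as matrices in general, and you comment on the admissible class; both make explicit what the paper leaves implicit, and the paper's extra appeal to Lemma~\ref{lemequi} is unnecessary because the two right-hand sides coincide.
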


\begin{proof}
Since $f_i=\partial_{m_i}F$, we have 
$$
A_{ij}
=
\partial_{m_j}f_i
=
\partial_{m_i m_j}^2F.
$$
Thus $A=A^T=D_{\mathbf m}^2F$. The conclusion follows directly from Lemma~\ref{lem:second-variation} and Lemma \ref{lemequi}.
\end{proof}

\begin{theorem}
\label{thmpotentialturnpike}
Assume $f_i=\partial_{m_i}F$. Let $\bar{\mathbf m}$ be a  positive constrained critical point of $\mathcal E$ over $\mathcal A$, and define
$$
\bar u_i=-\log\bar m_i+c_i.
$$
Assume that $\bar{\mathbf m}$ is nondegenerate in the sense of Proposition~\ref{corminimizerS2}. Then the stationary solution generated by $\bar{\mathbf m}$ satisfies the local exponential turnpike theorem of Theorem~\ref{thm:twopopturnpike}.
\end{theorem}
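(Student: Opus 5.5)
The proof consists of verifying the three hypotheses of Theorem~\ref{thm:twopopturnpike}, namely \eqref{S1}, \eqref{S2} and \eqref{matrixS2}, for the stationary solution built from $\bar{\mathbf m}$, together with the required regularity and positivity. I would carry this out in three steps.

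\textbf{Step 1: stationary solution and \eqref{S1}.} Lemma~\ref{lem:critical-stationary} gives constants $\lambda_i$ such that $(\bar u_1,\bar u_2,\bar m_1,\bar m_2,\lambda_1,\lambda_2)$ with $\bar u_i=-\log\bar m_i+c_i$ is a classical solution of \eqref{ss1-recalled}. By construction $\nabla\bar u_i=-\nabla\bar m_i/\bar m_i$, which is exactly \eqref{S1}.

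\textbf{Step 2: regularity, positivity and \eqref{S2}.} Since $\bar m_i$ is continuous and positive on the compact torus, it is bounded above and below by positive constants. The regularity $\bar m_i\in C^{2+\theta}$ and $\bar u_i\in C^{3+\theta}$ comes from bootstrapping the Euler--Lagrange equation of Lemma~\ref{lem:critical-stationary}, rewritten in the form $-2\Delta\bar w_i+(f_i-\lambda_i)\bar w_i=0$ with $\bar w_i=\sqrt{\bar m_i}$. Here $F\in C^3$ gives $f_i\in C^2$, and Schauder estimates apply. The weighted Poincaré inequality \eqref{S2} then follows from the unweighted one on $\mathbb T^N$, because $\bar m_i\,dx$ is comparable to Lebesgue measure. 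One uses that $\int(\varphi-c)^2\bar m_i\,dx$ is minimized at $c=\int\varphi\,\bar m_i\,dx$, and then compares with the Lebesgue mean. The neighborhood $\mathcal U$ can be taken to be any compact-closure neighborhood of the range of $\bar{\mathbf m}$ inside $(0,\infty)^2$, where $F\in C^3$.

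\textbf{Step 3: \eqref{matrixS2}.} By Corollary~\ref{corminimizerS2}, $A=D_{\mathbf m}^2F(\bar{\mathbf m})$ is symmetric. The nondegeneracy assumption \eqref{nondegneracy-variation} together with Lemma~\ref{lem:second-variation} is then literally \eqref{matrixS2}, with weight $D$ and $\eta>0$.

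\textbf{Main obstacle: the weight $D$.} The point needing care is that $DA_s=\tfrac{DA+AD}{2}$, not $D\,D^2_{\mathbf m}F$. When $d_1\neq d_2$ the quadratic form $\boldsymbol\mu^TDA_s\boldsymbol\mu$ still equals $\boldsymbol\mu^TD\,D^2_{\mathbf m}F\,\boldsymbol\mu$, since only the symmetric part of a matrix contributes to a quadratic form. So the identification holds for every diagonal $D$, and it is trivial when $D=I$.

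Finally, I would adjust $\eta$ into $(0,1)$ if necessary: decreasing $\eta$ preserves the inequality. Theorem~\ref{thm:twopopturnpike} then applies and yields the local exponential turnpike estimates.
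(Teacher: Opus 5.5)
Your proposal is correct and follows the paper's own route. Lemma~\ref{lem:critical-stationary} supplies the stationary solution and \eqref{S1}, Corollary~\ref{corminimizerS2} (via Lemma~\ref{lem:second-variation}) identifies the nondegeneracy with \eqref{matrixS2}, and Theorem~\ref{thm:twopopturnpike} then applies. Your extra checks are sound details that the paper's three-line proof leaves implicit: the weighted Poincar\'e inequality \eqref{S2} from comparability of $\bar m_i\,dx$ with Lebesgue measure, the Schauder bootstrap for the H\"older regularity, and the observation that only the symmetric part of $D\,D^2_{\mathbf m}F$ enters the quadratic form.
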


\begin{proof}
By Lemma~\ref{lem:critical-stationary}, $\bar{\mathbf m}$ generates a stationary MFG solution satisfying \eqref{S1}. By Proposition~\ref{corminimizerS2}, the nondegeneracy of the constrained second variation is exactly \eqref{matrixS2}. Therefore, all hypotheses of Theorem~\ref{thm:twopopturnpike} are satisfied, and the local exponential turnpike estimate follows.
\end{proof}

{{
 \begin{remark}
We remark that if a critical point of \eqref{energyMultiple1} over the admissible set $\mathcal A$ is a local or global minimizer, then, by definition, we have $D^2\mathcal E(\bar{\mathbf m})[\boldsymbol\mu,\boldsymbol\mu]\geq 0$ by definition. However, the nonnegativity of the second variation alone is not sufficient to establish \eqref{nondegneracy-variation}. To obtain \eqref{nondegneracy-variation}, one must further analyze the kernel of the linearized operator $\mathcal L_{\bar{\mathbf m}}$ defined in \eqref{Schrodinger_eigenvalue} and prove that it is trivial.  We expect that the above nondegeneracy property holds on the bounded domain $\Omega$ with Neumann boundary conditions.  In contrast, on Torus $\mathbb T^N$ or whole space $\mathbb R^N$, the translational invariance of the problem generally gives rise to nontrivial kernel elements of the linearized operator. Therefore, the kernel space is expected to be nontrivial in $\mathbb T^N$/$\mathbb R^N$.
\end{remark}

 }}

\section{Cross-interaction thresholds and non-potential interactions}
\label{seccrossnonpotential}

We now consider linear cross-interactions
\begin{align}\label{linear_crossinteraction_f}
f_1(m_1,m_2)=a_{11}m_1+a_{12}m_2,
\qquad
f_2(m_1,m_2)=a_{21}m_1+a_{22}m_2.
\end{align}
Then
$$
A=
\begin{pmatrix}
a_{11}&a_{12}
\\
a_{21}&a_{22}
\end{pmatrix},
\qquad
DA_s=
\begin{pmatrix}
d_1a_{11}&\frac{d_1a_{12}+d_2a_{21}}2
\\
\frac{d_1a_{12}+d_2a_{21}}2&d_2a_{22}
\end{pmatrix}.
$$

\begin{lemma}\label{lemweightedthreshold}
Assume that there exists a constant $\kappa \geq 0$ such that
\[
\boldsymbol{\mu}^TD A_s \boldsymbol{\mu}
 \geq 
-\kappa
\sum_{i=1}^2d_i
\frac{\mu_i^2}{\bar m_i}
\qquad
\text{for all } \boldsymbol{\mu} \in \mathbb{R}^2
\text{ and all } x \in \mathbb{T}^N.
\]
Furthermore, if the condition $\kappa C_P < 1$ is satisfied, where $C_P  = \max\{C_{P,1}, C_{P,2}\}$, then inequality \eqref{matrixS2} holds with the constant $\eta = 1 - \kappa C_P$.
\end{lemma}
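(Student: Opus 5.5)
The plan is to integrate the pointwise lower bound and then absorb the resulting negative term into the gradient energy by means of the weighted Poincaré inequality \eqref{S2}.

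\textbf{Step 1 (integrate the pointwise bound).} Fix an admissible perturbation $\boldsymbol\mu=(\mu_1,\mu_2)$ as in \eqref{matrixS2}, and set $z_i=\mu_i/\bar m_i$. Applying the pointwise hypothesis at each $x\in\mathbb T^N$ with the vector $\boldsymbol\mu(x)$ and integrating over the torus gives
\[
\int_{\mathbb T^N}\boldsymbol\mu^TDA_s\boldsymbol\mu\,dx\ \ge\ -\kappa\sum_{i=1}^2 d_i\int_{\mathbb T^N}\frac{\mu_i^2}{\bar m_i}\,dx=-\kappa\, Z_D,
\]
using the notation of \eqref{funcZ}. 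Everything is integrable since $\mu_i\in L^\infty$ and $\bar m_i$ is bounded below.

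\textbf{Step 2 (Poincaré).} Since $\int_{\mathbb T^N}\mu_i\,dx=0$, we have $\int_{\mathbb T^N} z_i\bar m_i\,dx=0$. Hence \eqref{S2} applied to $\varphi=z_i$, which is admissible because $z_i\in W^{1,2}(\bar m_i\,dx)$, yields
\[
\int_{\mathbb T^N}\frac{\mu_i^2}{\bar m_i}\,dx\le C_{P,i}\int_{\mathbb T^N}\bar m_i|\nabla z_i|^2\,dx.
\]
Multiplying by $d_i>0$, summing over $i$, and using $C_{P,i}\le C_P$ gives $Z_D\le C_P\,G_D$. This is exactly the argument of the step (a)$\Rightarrow$(b) in Lemma~\ref{lemequi}.

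\textbf{Step 3 (combine).} Combining the two steps,
\[
\int_{\mathbb T^N}\Big[\boldsymbol\mu^TDA_s\boldsymbol\mu+\sum_i d_i\bar m_i|\nabla z_i|^2\Big]dx\ \ge\ G_D-\kappa C_P G_D=(1-\kappa C_P)\,G_D.
\]
This is \eqref{matrixS2} with $\eta=1-\kappa C_P$. The smallness condition $\kappa C_P<1$ guarantees $\eta>0$, and $\kappa\ge0$ gives $\eta\le 1$. If $\kappa=0$, then $\eta=1$ rather than lying in the open interval $(0,1)$; in that case one may take any smaller $\eta$, since the inequality remains valid.

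\textbf{Main obstacle.} There is no real obstacle. The only point needing care is checking that $z_i$ is an admissible test function in \eqref{S2} with zero weighted mean, which follows from the mass constraint. A concrete instance for the linear model \eqref{linear_crossinteraction_f} would then be obtained by taking $\kappa$ so large that $DA_s+\kappa\,\mathrm{diag}(d_1/\bar m_1,d_2/\bar m_2)\succeq 0$ pointwise.
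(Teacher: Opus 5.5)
Your proposal is correct and follows the paper's proof: integrate the pointwise bound, apply the weighted Poincaré inequality \eqref{S2} to $z_i=\mu_i/\bar m_i$, which has zero $\bar m_i$-weighted mean, and combine the two to obtain $\eta=1-\kappa C_P$. Your version also carries the weights $d_i$ through the Poincaré step explicitly, which the paper's displayed inequality leaves out, and your comment on the edge case $\kappa=0$ is a fair minor point.
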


\begin{proof}
By the assumed pointwise inequality,
$$
\int_{\mathbb T^N}
\boldsymbol\mu^T DA_s\boldsymbol\mu\,dx
\geq
-\kappa
\sum_{i=1}^2
\int_{\mathbb T^N} d_i
\frac{\mu_i^2}{\bar m_i}\,dx.
$$
By the weighted Poincaré inequality, we get 
$$
\sum_{i=1}^2
\int_{\mathbb T^N}
\frac{\mu_i^2}{\bar m_i}\,dx
\leq
C_P
\sum_{i=1}^2
\int_{\mathbb T^N}
\bar m_i
\left|
\nabla\left(\frac{\mu_i}{\bar m_i}\right)
\right|^2dx.
$$
Therefore, we have 
$$
\int_{\mathbb T^N}
\left[
\boldsymbol\mu^TDA_s\boldsymbol\mu
+
\sum_{i=1}^2
d_i\bar m_i
\left|
\nabla\left(\frac{\mu_i}{\bar m_i}\right)
\right|^2
\right]dx
\geq
(1-\kappa C_P)
\sum_{i=1}^2
\int_{\mathbb T^N} d_i
\bar m_i
\left| 
\nabla\left(\frac{\mu_i}{\bar m_i}\right)
\right|^2dx.
$$
This is \eqref{matrixS2}. This completes the proof. 
\end{proof}
When  $f_1$ and $f_2$ satisfy the linear cross-interactions and $DA=A^TD$, we verify the matrix coercivity condition \eqref{matrixS2} by analyzing the eigenvalue problem \eqref{Schrodinger_eigenvalue}.  The resulting operator coincides with the linearized operator associated with the coupled cubic Schrödinger system, which is
\begin{align}\label{lep_schrodinger}
\left\{\begin{array}{ll}
-{\Delta\psi_1}+\bigg[\frac{3a_{11}}{2}\bar w_1^2+\frac{a_{12}}{2}\bar w_2^2-\frac{\lambda_1}{2}\bigg]\psi_1+a_{12}\bar w_1\bar w_2\psi_2=\hat\lambda\psi_1,&x\in\mathbb T^N,\\
-{\Delta\psi_2}+\bigg[\frac{3a_{21}}{2}\bar w_1^2+\frac{a_{22}}{2}\bar w_2^2-\frac{\lambda_2}{2}\bigg]\psi_2+a_{21}\bar w_1\bar w_2\psi_1=\hat \lambda\psi_2,&x\in\mathbb T^N,
\end{array}
\right.
\end{align}
where $(\bar m_1,\bar m_2)=(\bar w_1^2,\bar w_2^2)$ is the solution to (\ref{ss1-recalled}) and $(\psi_1,\psi_2)\in \mathcal H_0$ is given by (\ref{mathcalH0_admissible set}).  { {Concerning the couplings satisfying \eqref{linear_crossinteraction_f}, it is well known that, in the strong competition regime $a_{12}=a_{21}\to -\infty$, the system \eqref{ss1-recalled} exhibits phase separation.   More precisely, one expects there to exist solutions such that  $\bar m_1\bar m_2\to 0$ as $a_{12}=a_{21}\rightarrow -\infty$.  
See, for instance, \cite{noris2010uniform,wei2008radial}.}  Our result, stated in Theorem~\ref{thm:twopopturnpike}, implies that many phase-separated states satisfy a local turnpike property.}

{ 
\subsection{Phase separation}
We focus on the phase-separation phenomenon in a two-population MFG system. To avoid the translational invariance inherent in the periodic setting $\mathbb T^N$, we consider an example posed on a bounded domain $\Omega$ with homogeneous Neumann boundary conditions.
  Let \(N=1\), \(\Omega=[-\pi,\pi]\), and suppose that
\eqref{linear_crossinteraction_f} holds with \(a_{11}=a_{22}=-2\), \(a_{12}=a_{21}=2\beta>0\). Replacing the periodic boundary conditions in \eqref{ss1-recalled} with homogeneous Neumann boundary conditions and applying the Hopf-Cole transform, we obtain the following cubic Schr\"{o}dinger system:
\begin{align}\label{eq:competitive-schrodinger-torus}
\begin{cases}
-\bar w_1''-\dfrac{\lambda_1}{2}\,\bar w_1=\bar w_1^3-\beta \bar w_1 \bar w_2^2,& x\in (-\pi,\pi),\\[0.3em]
-\bar w_2''-\dfrac{\lambda_2}{2}\,\bar w_2=\bar w_2^3-\beta \bar w_2 \bar w_1^2, & x\in (-\pi,\pi),\\[0.3em]
 \displaystyle\int_{-\pi}^\pi\bar w_1^2\,dx
=
 \int_{-\pi}^\pi\bar w_2^2\,dx
=1,\\[0.3em]
\bar w'_i(-\pi)=\bar w_i'(\pi)=0,&i=1,2,
\end{cases}
\qquad,
\end{align}
where \((\bar m_1,\bar m_2)=(\bar w_1^2,\bar w_2^2)\). As established in \cite{noris2010uniform}, when the competition is strong, Schrödinger systems of this type display phase separation in the limit \(\beta\to+\infty\). Inspired by this behavior, we focus on a symmetric segregated configuration whose limiting profiles are given by
\begin{align}\label{eq:explicit-bump}
\bar w_1^\infty(x)=
\begin{cases}
W(x+\pi),&x\in(-\pi,0),\\
0,&x\in(0,\pi),
\end{cases}
\qquad
\bar w_2^\infty(x)=
\begin{cases}
0,&x\in(-\pi,0),\\
W(\pi-x),&x\in(0,\pi),
\end{cases}
\end{align}
where \((W,\lambda)\), with \(\lambda_1=\lambda_2=2\lambda\), is the unique positive “bump’’ solution of
\[
\begin{cases}
-W''-\lambda W=W^3,
& x\in(0,\pi),
\\
W>0,\\
W'(0)=W(\pi)=0,\quad \displaystyle\int_{0}^{\pi} W^2\,dx=1.
\end{cases}
\]
Moreover, \(W\) has the explicit form
\[
W(x)
=
\sqrt{2m}\,\frac{K(m)}{\pi}\,
\mathrm{cn}\!\left(\frac{K(m)}{\pi}x;m\right),
\qquad x\in(0,\pi),
\]
where \(\mathrm{cn}(\cdot;m)\) is the Jacobi elliptic cosine with modulus
\(m\in(1/2,1)\). Here
\[
K(m)
=
\int_0^{\pi/2}
\frac{d\theta}{\sqrt{1-m\sin^2\theta}},
\qquad
E(m)
=
\int_0^{\pi/2}
\sqrt{1-m\sin^2\theta}\,d\theta
\]
denote the complete elliptic integrals of the first and second kinds, respectively. The associated Lagrange multiplier is
\[
\lambda
=
(1-2m)
\left(\frac{K(m)}{\pi}\right)^2
\]
and the parameter \(m\in(1/2,1)\) is uniquely fixed by the normalization condition
\[
\int_{0}^{\pi}W^2\,dx=1,
\]
which is equivalent to
\[
2K(m)\Big[E(m)-(1-m)K(m)\Big]=\pi.
\]

In the case \(N=1\), \(a_{11}=a_{22}=-2\) and \(a_{12}=a_{21}=2\beta>0\), we consider the linearized eigenvalue problem (\ref{lep_schrodinger}) around \((\bar m_1,\bar m_2)=(\bar w^2_1,\bar w^2_2)\) and obtain
\begin{align}\label{eigenphase}
\left\{
\begin{array}{ll}
-\psi_1''+
\left(
-3\bar w_1^2+\beta \bar w_2^2-\frac{\lambda_1}{2}
\right)\psi_1
+
2\beta \bar w_1\bar w_2\,\psi_2
=
\hat\lambda\,\psi_1,\quad &x\in(-\pi,\pi),
\\ 
-\psi_2''+
\left(
\beta \bar w_1^2-3\bar w_2^2-\frac{\lambda_2}{2}
\right)\psi_2
+
2\beta \bar w_1\bar w_2\,\psi_1
=
\hat\lambda\,\psi_2,\quad &x\in(-\pi,\pi),\\
\int_{-\pi}^\pi \bar w_i\psi_i\,dx=0,&i=1,2,\\[0.3em]
\psi_i'(-\pi)=\psi'_i(\pi)=0,&i=1,2.
\end{array}
\right.
 \end{align}
As \(\beta\rightarrow+\infty\), the formal limiting problem of (\ref{eigenphase}) becomes 
\begin{align}\label{eigenphase1}
\left\{
\begin{array}{ll}
-\psi_1''+
\left(
-3\bar w_1^2 -\frac{\lambda_1}{2}
\right)\psi_1
=
\hat\lambda\,\psi_1,\quad &x\in(-\pi,0),
\\ 
-\psi_2''+
\left(
 -3\bar w_2^2-\frac{\lambda_2}{2}
\right)\psi_2
=
\hat\lambda\,\psi_2,\quad &x\in(0,\pi),\\
\psi_1'(-\pi)=\psi_2'(\pi)=0,\\
\psi_1(0)
+
\eta(\bar w_1^\infty)'(0^-)
=0,\\
\psi_2(0)
+
\eta(\bar w_2^\infty)'(0^+)
=0,\\
(\bar w_1^\infty)'(0^-)\psi_1'(0^-)
=
(\bar w_2^\infty)'(0^+)\psi_2'(0^+),
\
\end{array}
\right.
 \end{align}
where $\eta$ denotes the perturbation of the location of the segregation interface. We next exploit the symmetry of the limiting profiles $\bar w_1^{\infty}$ and $\bar w_2^{\infty}$; we introduce a transformation that decomposes the limiting free-boundary spectral problem into the direct sum of a constrained Dirichlet-Neumann problem and a constrained Neumann-Neumann problem. To this end, we first define the symmetric and antisymmetric variables as follows
\[
p=\psi_1(-x)+\psi_2(x),
\qquad
q=\psi_1(-x)-\psi_2(x).
\]
 
Using
\[
(\bar w_1^{\infty})'(0^-)=-W'(0),
\qquad
(\bar w^{\infty}_2)'(0^+)=W'(0),
\]
we obtain
 
\begin{align}\label{problem67}
\left\{\begin{array}{ll}L_+p=\hat\lambda p,
& 0<x<\pi,\\[1mm]
p(0)=p'(\pi)=0,\\
\int_0^\pi Wp\,dx=0,
\end{array}
\right.,~\quad\quad
\left\{\begin{array}{ll}
L_+q=\hat\lambda q,
& 0<x<\pi,\\[1mm]
q'(0)=q'(\pi)=0,\\
\int_0^\pi Wq\,dx=0,
\end{array}
\right.
\end{align}
where 
\[
L_+
:=
-\frac{d^2}{dx^2}
-
3W^2
-
{\lambda},
\]
and \(\eta\) satisfies $\eta=\frac{q(0)}{2W'(0)}.$

Introducing the change of variable \(y=kx\), and using \eqref{eq:explicit-bump}, we define
 \begin{align}\label{lame_operator}
L_{+}
=
k^2\Big[
-\partial_{yy}
-6m\,\mathrm{cn}^2(y;m)
+ {2m-1} 
\Big],
\end{align}
which is a shifted Lamé operator of order \(2\). With this operator, after a translation and the substitution \(y=kx\), problem \eqref{problem67} is equivalent to
\begin{align}\label{eigenphase1equiv}
\left\{\begin{array}{ll}L_+p=\hat\lambda p,
& 0<y<k\pi,\\[1mm]
p(0)=p'(k\pi)=0,\\
\int_0^{k\pi} Wp\,dy=0,
\end{array}
\right.,~\quad\quad
\left\{\begin{array}{ll}
L_+q=\hat\lambda q,
& 0<y<k\pi,\\[1mm]
q'(0)=q'(k\pi)=0,\\
\int_0^{k\pi} Wq\,dy=0.
\end{array}
\right.
\end{align}
In the absence of the integral constraint, the eigenvalue problem \eqref{eigenphase1equiv} has the following explicit eigenpairs:
\begin{equation}\label{hatpsi1_lame}
\begin{array}{ll}
\hat\lambda^{(1)}=0,
&
p^{(1)}(y)=\operatorname{sn}(y;m)\operatorname{dn}(y;m),
\\[2mm]
\hat\lambda_{\pm}^{(2)}=(1-2m\pm2\Delta_m)k^2,
&
q_{\pm}^{(1)}(y)=1-\bigl(1+m\pm\Delta_m\bigr)\operatorname{sn}^2(y;m).
\end{array}
\end{equation}
where $\Delta_m:=\sqrt{1-m+m^2}$ and \(\mathrm{sn}(\cdot;m)\), \(\mathrm{cn}(\cdot;m)\), and \(\mathrm{dn}(\cdot;m)\) are the Jacobi elliptic sine, cosine, and delta-amplitude functions with modulus \(m\in(1/2,1)\), respectively. In particular, by the classical Sturm–Liouville theory,  
\[
\hat\lambda^{(2)}_{-}<\hat\lambda^{(1)}=0<\hat\lambda^{(2)}_{+}\le \hat\lambda^{(j)},
\qquad j\ge 3,
\]
where $\hat\lambda^{(j)}$ denotes the other eigenvalues. In light of the integral constraints, we observe that neither \(p^{(1)}\) nor \(q_{\pm}^{(1)}\) satisfies $\int_0^{k\pi} Wp\,dy=0$, $\int_0^{k\pi} Wq\,dy=0,$ as can be verified from \eqref{hatpsi1_lame}.

 
Therefore, condition \eqref{matrixS2} holds by using  Theorem \ref{thmspeccharac}.  An application of Theorem~\ref{thm:twopopturnpike} then yields the local turnpike stability of the phase-separated profile, whose limiting profile is given by \eqref{eq:explicit-bump}.}
\subsection{Sharp threshold in the homogeneous case}

Assume that on $\mathbb T^N$ we have $\bar m_1 = \bar m_2 = 1$. We denote by $\lambda_{\mathbb T} > 0$ the smallest nonzero (strictly positive) eigenvalue of the Laplacian $-\Delta$ on $\mathbb T^N$. The corresponding Poincaré constant is then $C_P = \frac{1}{\lambda_{\mathbb T}}$.

\begin{proposition}\label{prop:sharp-threshold} In the homogeneous case $\bar m_1=\bar m_2=1$, the matrix coercivity condition \eqref{matrixS2} holds if and only if $DA_s+\lambda_{\mathbb T}I_2>0$. This is furthermore equivalent to the following system of inequalities:
\[
a_{11}+\lambda_{\mathbb T}>0,
\quad
a_{22}+\lambda_{\mathbb T}>0,
\quad \text{ and}\qquad
(a_{11}+\lambda_{\mathbb T})
(a_{22}+\lambda_{\mathbb T})
>
\frac{(d_1a_{12}+d_2a_{21})^2}{4d_1d_2}.
\]
The associated sharp threshold can  be expressed as
\[
\frac{a_{11}+a_{22}}2
-
\frac12
\sqrt{
(a_{11}-a_{22})^2+\frac{(d_1a_{12}+d_2a_{21})^2}{d_1d_2}
}
>
-\lambda_{\mathbb T}.
\]
\end{proposition}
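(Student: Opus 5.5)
The plan is to exploit that, when $\bar m_1=\bar m_2=1$, everything in \eqref{matrixS2} has constant coefficients. This lets us diagonalize the quadratic form by Fourier series on $\mathbb T^N$ and reduce the problem to a $2\times 2$ matrix inequality. Here $z_i=\mu_i$, the constraint is $\int\mu_i\,dx=0$, and \eqref{matrixS2} reads
\[
\int_{\mathbb T^N}\Bigl[\boldsymbol\mu^TDA_s\boldsymbol\mu+\sum_{i=1}^2 d_i|\nabla\mu_i|^2\Bigr]dx\ \ge\ \eta\sum_{i=1}^2 d_i\int_{\mathbb T^N}|\nabla\mu_i|^2dx .
\]
First I would remove the weights. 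Set $\boldsymbol\nu=D^{1/2}\boldsymbol\mu$ and $M:=D^{-1/2}(DA_s)D^{-1/2}$. Then $M$ is a constant symmetric matrix with diagonal entries $a_{11},a_{22}$ and off-diagonal entry $\frac{d_1a_{12}+d_2a_{21}}{2\sqrt{d_1d_2}}$. The inequality becomes $\int[\boldsymbol\nu^TM\boldsymbol\nu+(1-\eta)|\nabla\boldsymbol\nu|^2]\,dx\ge0$ for all zero-mean $\boldsymbol\nu$. I read the condition ``$DA_s+\lambda_{\mathbb T}I_2>0$'' in the statement as $M+\lambda_{\mathbb T}I_2>0$, equivalently $DA_s+\lambda_{\mathbb T}D>0$. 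This matches the listed inequalities and coincides literally with the statement when $d_1=d_2$. The main point of care in the write-up is making this normalization explicit.

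\textbf{Sufficiency.} Expand $\boldsymbol\nu=\sum_{k\neq0}\hat{\boldsymbol\nu}(k)e^{ik\cdot x}$; the zero mode vanishes by the mass constraint. Admissible $\boldsymbol\mu$ lie in $H^1$, so Parseval applies. The form equals $\sum_{k\ne0}\hat{\boldsymbol\nu}(k)^*\bigl(M+(1-\eta)|k|^2I\bigr)\hat{\boldsymbol\nu}(k)$, and every nonzero frequency satisfies $|k|^2\ge\lambda_{\mathbb T}$. Let $\ell:=\lambda_{\min}(M)$ and assume $\ell+\lambda_{\mathbb T}>0$. Choose $\eta\in(0,1)$ with $(1-\eta)\lambda_{\mathbb T}+\ell\ge0$, for instance $\eta=\min\{\tfrac12,(\ell+\lambda_{\mathbb T})/\lambda_{\mathbb T}\}$. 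Then each Hermitian matrix $M+(1-\eta)|k|^2I$ is positive semidefinite, since $(1-\eta)|k|^2+\ell\ge(1-\eta)\lambda_{\mathbb T}+\ell\ge0$. This gives \eqref{matrixS2}.

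\textbf{Necessity.} Let $\phi$ be a real eigenfunction of $-\Delta$ for $\lambda_{\mathbb T}$, e.g.\ $\cos(k_0\cdot x)$ with $|k_0|^2=\lambda_{\mathbb T}$; it is smooth, bounded and has zero mean. Let $v$ be a unit eigenvector of $M$ for $\ell$, and test with $\boldsymbol\nu=v\phi$, i.e.\ $\boldsymbol\mu=D^{-1/2}v\phi$, which is admissible. The inequality gives $(\ell+(1-\eta)\lambda_{\mathbb T})\|\phi\|_2^2\ge0$, hence $\ell+\lambda_{\mathbb T}\ge\eta\lambda_{\mathbb T}>0$. So \eqref{matrixS2} holds iff $M+\lambda_{\mathbb T}I_2>0$.

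\textbf{Algebraic forms.} Sylvester's criterion applied to $M+\lambda_{\mathbb T}I_2$ yields exactly the three inequalities: the diagonal entries are $a_{ii}+\lambda_{\mathbb T}>0$, and the determinant condition is $(a_{11}+\lambda_{\mathbb T})(a_{22}+\lambda_{\mathbb T})>\frac{(d_1a_{12}+d_2a_{21})^2}{4d_1d_2}$. Finally, the explicit formula for the smallest eigenvalue of a $2\times2$ symmetric matrix,
\[
\ell=\frac{a_{11}+a_{22}}2-\frac12\sqrt{(a_{11}-a_{22})^2+\frac{(d_1a_{12}+d_2a_{21})^2}{d_1d_2}},
\]
turns $\ell>-\lambda_{\mathbb T}$ into the stated sharp threshold. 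Sharpness follows from the necessity test function above, which saturates the threshold.
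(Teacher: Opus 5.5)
Your proof is correct and follows essentially the paper's route: Fourier diagonalization on $\mathbb T^N$, reduction to positive definiteness of $DA_s+\lambda_{\mathbb T}D$ (equivalently of $D^{-1/2}(DA_s)D^{-1/2}+\lambda_{\mathbb T}I_2$), then Sylvester's criterion and the explicit $2\times 2$ eigenvalue formula. Your reading of the condition as $DA_s+\lambda_{\mathbb T}D>0$ is what the paper's proof actually uses, and your explicit choice of $\eta$ together with the single-mode test function spell out the passage from positivity to coercivity that the paper leaves implicit. One small slip in your side remark: this reading agrees with the literal $DA_s+\lambda_{\mathbb T}I_2>0$ only when $D=I_2$, not for arbitrary $d_1=d_2$.
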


\begin{proof}
Since $\bar m_1=\bar m_2=1$, it follows that $z_i=\mu_i$. For functions $z_i$ with vanishing spatial mean, we employ their Fourier representation,
\[
\mathbf{funcZ}(x)
=
\sum_{k\neq 0}\widehat{\mathbf{funcZ}}_k\,e^{i k\cdot x},
\]
where the sum is taken over all nonzero wavevectors $k$.

In this representation, the quadratic form $\mathcal B_D$ can be expressed as
$$
\mathcal B_D[\mathbf z]
=
\sum_{k\neq0}
\widehat{\mathbf z}_k^*
\left(
|k|^2D+DA_s
\right)
\widehat{\mathbf z}_k,
$$
while the constraint functional takes the form
$$
\mathcal G_D[\mathbf z]
=
\sum_{k\neq0}|k|^2\widehat{\mathbf z}_k^*D\widehat{\mathbf z}_k.
$$
Thus, $\mathcal B_D$ is positive on the constraint space if and only if
$|k|^2I_2+DA_s>0$ for every $k\neq0$. Since the spectrum of $-\Delta$ on $\mathbb T^N$ satisfies $|k|^2\geq \lambda_{\mathbb T}$ for all $k\neq0$, this requirement is equivalent to $DA_s+\lambda_{\mathbb T}D>0$. Equivalently, this requirement can be written as $\lambda_{\min}(D^{-1/2}(DA_s)D^{1/2})>-\lambda_{\mathbb T}$. For a real symmetric $2\times2$ matrix, the positive definiteness of
$$
DA_s+\lambda_{\mathbb T}D
=
\begin{pmatrix}
d_1a_{11}+d_1\lambda_{\mathbb T}
&
\frac{d_1a_{12}+d_2a_{21}}2
\\
\frac{d_1a_{12}+d_2a_{21}}2
&
d_2a_{22}+d_2\lambda_{\mathbb T}
\end{pmatrix}
$$
is equivalent to the positivity of its two leading principal minors, that is, $ a_{11}+\lambda_{\mathbb T}>0$
and $
d_1d_2(a_{11}+\lambda_{\mathbb T})
(a_{22}+\lambda_{\mathbb T})
-
\frac{(d_1a_{12}+d_2a_{21})^2}{4}
>0$. 

The eigenvalues of $DA_s$ are given by
$$
\lambda_{\pm}(D^{-1/2}(DA_s)D^{-1/2})
=
\frac{a_{11}+a_{22}}2
\pm
\frac12
\sqrt{
(a_{11}-a_{22})^2+\frac{(d_1a_{12}+d_2a_{21})^2
}{d_1d_2}}.
$$
Consequently, we have $\lambda_{\min}(DA_s)=\lambda_-(DA_s)$, and inserting this into the criterion from Proposition~\ref{prop:sharp-threshold} yields the desired inequality. This concludes the proof of the proposition.
\end{proof}
\subsection{Non-potential interactions}

The system is said to be potential if and only if $\partial_{m_2} f_1 = \partial_{m_1} f_2$. For the linear model, this condition is equivalent to $a_{12} = a_{21}$. If $a_{12} \neq a_{21}$, the coupling is non-potential. we decompose the matrix $DA$ as $ DA = DA_s + DA_{\rm sk}$,  where $DA_{\rm sk} = \frac{DA - A^TD}{2}$, and $DA_s$ denotes the symmetric part of $DA$ and $DA_{\rm sk}$ its skew-symmetric part.

For every $\boldsymbol\mu \in \mathbb{R}^2$ we have $ \boldsymbol\mu^{\mathsf T} DA_{\mathrm{sk}} \boldsymbol\mu = 0$. Consequently, we get 
\[
\boldsymbol\mu^{\mathsf T} DA \boldsymbol\mu = \boldsymbol\mu^{\mathsf T} DA_s \boldsymbol\mu.
\]
Moreover, since $(DA_{\mathrm{sk}})^{\mathsf T} = -DA_{\mathrm{sk}}$, it follows that
\[
\boldsymbol\mu^{\mathsf T} DA_{\mathrm{sk}} \boldsymbol\mu
=
\left(
\boldsymbol\mu^{\mathsf T} DA_{\mathrm{sk}} \boldsymbol\mu
\right)^{\mathsf T}
=
\boldsymbol\mu^{\mathsf T} (DA_{\mathrm{sk}})^{\mathsf T} \boldsymbol\mu
=
- \boldsymbol\mu^{\mathsf T} DA_{\mathrm{sk}} \boldsymbol\mu.
\]
Thus, the scalar quantity $\boldsymbol\mu^{\mathsf T} DA_{\mathrm{sk}} \boldsymbol\mu$ equals its own additive inverse and hence must be zero.

The proof of Theorem~\ref{thm:twopopturnpike} remains valid in the case of non-potential couplings. Indeed, the energy method relies  on the coercivity of the symmetric part $DA_s$ of the coupling matrix. Consequently, if condition \eqref{matrixS2} holds with $DA_s$, then the local exponential turnpike property for the two-population system continues to hold, independently of whether the compatibility condition
$\partial_{m_2} f_1 = \partial_{m_1} f_2$ is satisfied. Moreover, in the energy identity, the linearized coupling appears through the quadratic form $\displaystyle
\int_{\mathbb T^N} \boldsymbol\mu^{T} DA \boldsymbol\mu \, dx$, which can be rewritten as
$\displaystyle
\int_{\mathbb T^N} \boldsymbol\mu^{T} DA_s \boldsymbol\mu \, dx$.
Consequently, the dissipative estimate from Lemma~\ref{lemdissipationid} is unchanged by the skew-symmetric component of $A$ and thus continues to hold in this context. The nonlinear remainder terms involve only the $C^2$-bounds of $\mathbf f$ and do not rely on any potential (gradient) structure. Therefore, the complete proof of Theorem~\ref{thm:twopopturnpike} extends directly, without modification, to this more general non-potential setting.

\begin{proposition}\label{prop:oscillations}
Consider the homogeneous linearized system with
$\bar m_1=\bar m_2=1$, $\bar u_1=\bar u_2=0$, and
$A=\alpha I_2+\beta J$, where
$$
J=
\begin{pmatrix}
0&1\\
-1&0
\end{pmatrix}.
$$
In this setting we have $A_s=\alpha I_2$. The matrix coercivity condition holds if and only if $\alpha>-\lambda_{\mathbb T}$. Moreover, when $\beta\neq0$, the linearized dynamics admits complex (non-real) characteristic exponents, and therefore the turnpike convergence toward the stationary state may exhibit oscillatory behavior.
\end{proposition}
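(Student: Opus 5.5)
The plan is a Fourier-mode computation for the coercivity claim, followed by a linearization around the homogeneous state and an explicit symbol computation for the oscillation claim.

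\textbf{Symmetric part and coercivity.} First I compute the symmetric part. Here I take $D=I_2$, or more generally $d_1=d_2$; the weighted symmetrization only makes sense in this setting, because for $d_1\neq d_2$ the off-diagonal entry $\beta(d_1-d_2)/2$ survives. Since $J^T=-J$, we get $A_s=\alpha I_2$.

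For the coercivity statement I will invoke Proposition~\ref{prop:sharp-threshold} with $a_{11}=a_{22}=\alpha$ and $a_{12}=\beta=-a_{21}$. The cross term $(d_1a_{12}+d_2a_{21})^2$ vanishes, so the conditions reduce to $\alpha+\lambda_{\mathbb T}>0$.

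A direct argument is also available and gives the same conclusion. In Fourier variables,
$$
\mathcal B_D=\sum_{k\neq0}\bigl(|k|^2+\alpha\bigr)|\widehat{\mathbf z}_k|^2 .
$$
This is bounded below by $\eta\,\mathcal G_D$ exactly when $\inf_{k\neq0}\bigl(1+\alpha/|k|^2\bigr)>0$. When $\alpha\ge0$ this holds trivially. When $\alpha<0$ the infimum is attained at $|k|^2=\lambda_{\mathbb T}$, so the condition becomes $\alpha>-\lambda_{\mathbb T}$. For necessity, I test with a single mode $e^{ik\cdot x}$ where $|k|^2=\lambda_{\mathbb T}$, placed in both components.

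\textbf{Linearized system.} Next I write the linearized system from \eqref{pertsystem}, with $\bar u=0$, $\bar m=1$, and the nonlinear terms dropped:
$$
-\partial_t v=\Delta v+A\mu,\qquad \partial_t\mu=\Delta\mu+\Delta v,
$$
with $v,\mu\in\mathbb R^2$. For the Fourier mode $k$, writing $s=|k|^2$, this becomes the $4\times4$ ODE
$$
\dot{\widehat\mu}=-s\widehat\mu-s\widehat v,\qquad \dot{\widehat v}=s\widehat v-A\widehat\mu .
$$
I look for solutions proportional to $e^{\rho t}$. Eliminating $\widehat v$ gives $\widehat v=-(\rho+s)\widehat\mu/s$, and substituting into the second equation yields
$$
\bigl[(s^2-\rho^2)I_2+sA\bigr]\widehat\mu=0 .
$$
So the characteristic exponents satisfy $\rho^2=s^2+s\,\nu$, where $\nu$ runs over the eigenvalues of $A$. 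These are $\nu=\alpha\pm i\beta$, since $J$ has eigenvalues $\pm i$.

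\textbf{Complex exponents and oscillation.} It follows that
$$
\rho^2=s^2+s\alpha\pm i s\beta .
$$
When $\beta\neq0$ and $s>0$, $\rho^2$ is not real. Hence $\rho$ is neither real nor purely imaginary, and $\operatorname{Im}\rho\neq0$.

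Meanwhile $\operatorname{Re}(\rho^2)=s(s+\alpha)>0$ under coercivity. This gives $|\operatorname{Re}\rho|>|\operatorname{Im}\rho|$, so the modes split into a decaying branch ($\operatorname{Re}\rho<0$, forward) and a growing branch ($\operatorname{Re}\rho>0$, which decays backward from $T$). This is consistent with the turnpike structure $e^{-\sigma t}+e^{-\sigma(T-t)}$, now carrying factors $e^{i\operatorname{Im}\rho\,t}$.

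Finally, I exhibit an explicit solution, $\mathrm{Re}\bigl(e^{\rho t}e^{ik\cdot x}\boldsymbol\xi\bigr)$ with $\boldsymbol\xi$ an eigenvector of $J$. Its components rotate in time, and this is the oscillatory behavior claimed in the statement.

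\textbf{Main obstacle.} The main difficulty is making precise what ``oscillatory'' means for a forward–backward problem. I would state it as the existence of these modal solutions with non-real exponents, satisfying the mixed initial and terminal conditions through a $2\times2$ linear solve per mode. That solve is nondegenerate because $\operatorname{Re}\rho\neq0$.
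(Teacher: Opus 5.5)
Your proposal is correct and follows the paper's proof essentially step for step: you get $A_s=\alpha I_2$ from $J^T=-J$, the coercivity threshold from Proposition~\ref{prop:sharp-threshold}, and the per-mode characteristic relation $\rho^2=s(s+\alpha\pm i\beta)$ from the eigenvalues $\alpha\pm i\beta$ of $A$. Your remark that $DA_s=\alpha D$ requires $d_1=d_2$ is a genuine point the paper leaves implicit: for $d_1\neq d_2$ and $\beta\neq 0$, the off-diagonal entry $\beta(d_1-d_2)/2$ makes the condition strictly stronger than $\alpha>-\lambda_{\mathbb T}$. Your extras, namely $|\operatorname{Re}\rho|>|\operatorname{Im}\rho|$ under coercivity and the nondegenerate per-mode two-point solve, are correct but go beyond what the paper proves.
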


\begin{proof}
Since $J^T=-J$, it follows that $A_s=\alpha I_2$. By Proposition~\ref{prop:sharp-threshold}, coercivity holds precisely when $\alpha+\lambda_{\mathbb T}>0$. For a Fourier mode $e^{ik\cdot x}$ with $k\neq0$, set $q=|k|^2$. The linearized system then reads
$$
\dot{\widehat v}_k=q\widehat v_k-A\widehat\mu_k,
\qquad
\dot{\widehat\mu}_k=-q\widehat v_k-q\widehat\mu_k.
$$
The matrix $J$ has eigenvalues $si$ with $s=\pm1$. On the associated complex eigenspace, $A$ acts as multiplication by $ \alpha+s i\beta$. Hence, the characteristic polynomial is
$
\rho^2=q(q+\alpha+s i\beta),
$
so that
$
\rho_{k,s}^{\pm}
=
\pm
\sqrt{q(q+\alpha+s i\beta)}$.
If $\beta\neq0$, the associated eigenvalues are complex and non-real. Consequently, the corresponding stable and unstable modes may exhibit rotational (oscillatory) dynamics while their magnitudes either decay or grow. Therefore, the skew-symmetric component alters the qualitative character of the transient behavior, but does not modify the coercivity threshold.
\end{proof}

\begin{lemma}\label{lem:ordspectrum}
The positivity of the standard eigenvalues of the non-symmetric coupling matrix $A$ does not, in general, ensure that the equal-weight matrix coercivity condition \eqref{matrixS2} is satisfied.  However, one can choose suitable weights $d_1,d_2>0$
 so that the weighted matrix coercivity condition is satisfied.
\end{lemma}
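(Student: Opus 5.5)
The proof splits into two parts, both in the homogeneous setting $\bar m_1=\bar m_2=1$ on $\mathbb T^N$ with linear couplings \eqref{linear_crossinteraction_f}. In that setting Proposition~\ref{prop:sharp-threshold} reduces \eqref{matrixS2} to the algebraic conditions
\[
a_{11}+\lambda_{\mathbb T}>0,\qquad a_{22}+\lambda_{\mathbb T}>0,\qquad (a_{11}+\lambda_{\mathbb T})(a_{22}+\lambda_{\mathbb T})>\frac{(d_1a_{12}+d_2a_{21})^2}{4d_1d_2}.
\]
The first part of the lemma only needs an explicit counterexample. The second part reduces to optimizing the right-hand side over the ratio $t=d_1/d_2$.

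\textbf{Counterexample.} Take the upper-triangular matrix
\[
A=\begin{pmatrix}1&b\\0&1\end{pmatrix},
\]
whose eigenvalues both equal $1>0$. With equal weights $d_1=d_2=1$, the third condition reads $(1+\lambda_{\mathbb T})^2>b^2/4$. Equivalently, $\lambda_{\min}(A_s)=1-|b|/2$ must exceed $-\lambda_{\mathbb T}$. Choosing $|b|>2(1+\lambda_{\mathbb T})$ violates this, so \eqref{matrixS2} fails even though the spectrum of $A$ is positive. To exhibit the failure concretely, I would write down the Fourier mode $\mathbf z=\mathbf v\,e^{ik\cdot x}$ with $|k|^2=\lambda_{\mathbb T}$, where $\mathbf v$ is the eigenvector of $A_s$ for its lowest eigenvalue. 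This mode makes $\mathcal B_D[\mathbf z]<0$.

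\textbf{Choice of weights.} For the same matrix, the weighted condition becomes $(1+\lambda_{\mathbb T})^2>t\,b^2/4$. This holds as soon as $t=d_1/d_2<4(1+\lambda_{\mathbb T})^2/b^2$. By Proposition~\ref{prop:sharp-threshold}, \eqref{matrixS2} then holds with these weights. More generally, I would minimize
\[
h(t)=\frac{(t a_{12}+a_{21})^2}{4t}, \qquad t>0,
\]
and split into cases according to the sign of $a_{12}a_{21}$:
\begin{itemize}
\item If $a_{12}a_{21}\le 0$, take $t=-a_{21}/a_{12}$ when $a_{12}\neq0$. Then $h(t)=0$. If $a_{12}=0$, let $t\to\infty$; if $a_{21}=0$, let $t\to0$. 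In every sub-case the third condition holds automatically.
\item If $a_{12}a_{21}>0$, the AM--GM inequality gives $\min_t h=a_{12}a_{21}$, attained at $t=|a_{21}/a_{12}|$.
\end{itemize}
Hence suitable weights exist exactly when $a_{ii}+\lambda_{\mathbb T}>0$ and $\det(A+\lambda_{\mathbb T}I_2)>0$ in the second case. This is the diagonal-stability criterion for the $2\times2$ matrix $A+\lambda_{\mathbb T}I_2$.

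\textbf{Main obstacle.} The delicate point is the scope of the word ``can'' in the statement. Positive eigenvalues of $A$ alone do not force the diagonal entries $a_{ii}+\lambda_{\mathbb T}$ to be positive. So I would state the positive assertion under the hypothesis that $A+\lambda_{\mathbb T}I_2$ has positive diagonal entries and positive determinant. This hypothesis holds, for instance, for any $A$ with real positive eigenvalues and nonnegative diagonal entries. Under it, I would present the explicit weight ratio $t$ found above, and remark that the triangular example is covered.
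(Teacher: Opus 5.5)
Your argument is correct, and its core is the paper's. You use the same upper-triangular $A$ with the single eigenvalue $1$. You get the same failure threshold $|b|>2(1+\lambda_{\mathbb T})$ from Proposition~\ref{prop:sharp-threshold} with $D=I_2$. You use the same remedy of making $d_2/d_1$ large. The paper takes $D=\operatorname{diag}\{1,d\}$ with $d>M^2/4$, which lies inside your sharp range $d_2/d_1>b^2/(4(1+\lambda_{\mathbb T})^2)$.

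The paper's displayed matrix also omits the factors $1+\lambda_{\mathbb T}$ on the diagonal. The correct matrix is $\begin{pmatrix}1+\lambda_{\mathbb T}&M/2\\ M/2&d(1+\lambda_{\mathbb T})\end{pmatrix}$. This is harmless, and your version is the accurate one.

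You go beyond the paper when you optimize $h(t)$. The paper only exhibits weights for the one triangular example. Your computation instead yields an exact criterion: suitable weights exist if and only if $a_{ii}+\lambda_{\mathbb T}>0$ and $\det(A+\lambda_{\mathbb T}I_2)>0$, which is $2\times 2$ diagonal stability of $A+\lambda_{\mathbb T}I_2$.

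Your caveat about the word ``can'' is justified. It shows the paper's proof establishes the second sentence only in an existential sense. Take $A=\begin{pmatrix}-c&c+1\\-(c+2)&c+3\end{pmatrix}$, which has eigenvalues $1$ and $2$ for every $c$. For $c\ge\lambda_{\mathbb T}$, the $(1,1)$ entry $d_1(\lambda_{\mathbb T}-c)$ of $DA_s+\lambda_{\mathbb T}D$ is nonpositive for every choice of weights. So no diagonal $D$ restores \eqref{matrixS2} in the homogeneous case, and your restricted formulation is the right one.

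Two small fixes:
\begin{itemize}
\item Use the real mode $\mathbf v\cos(k\cdot x)$ instead of $\mathbf v e^{ik\cdot x}$, since \eqref{matrixS2} is posed for real $\boldsymbol\mu$.
\item Restrict the choice $t=-a_{21}/a_{12}$ to the case $a_{12}a_{21}<0$. If $a_{21}=0$ it gives $t=0$; your separate sub-cases already handle that situation.
\end{itemize}
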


\begin{proof}
Consider $\displaystyle
A=
\begin{pmatrix}
1 & M\\
0 & 1
\end{pmatrix}$. The matrix $A$ has $1$ as its only eigenvalue, regardless of the value of $M$. On the other hand, choosing $D=I_2$  yields  that the symmetric part of $DA$ is
$ DA_s=
\begin{pmatrix}
1 & M/2\\
M/2 & 1
\end{pmatrix}$, whose smallest eigenvalue equals $\lambda_{\min}(DA_s)=1-\frac{M}{2}$. Furthermore, if the inequality $M > 2\bigl(1 + \lambda_{\mathbb T}\bigr)$ holds, then we obtain $\lambda_{\min}(DA_s) < -\lambda_{\mathbb T}$. By Proposition~\ref{prop:sharp-threshold}, this implies that condition~\eqref{matrixS2} fails in the homogeneous case. Thus, the usual spectrum of $A$ alone does not suffice to ensure that the equal-weight turnpike coercivity condition is fulfilled.  Whereas, if we choose  $D=\text{diag}\{1,d\}$, $d>\frac{M^2}{4},$
one has
\[
\frac{D(A+\lambda_{\mathbb T}I_2)
+(A+\lambda_{\mathbb T}I_2)^{T}D}{2}
=
\begin{pmatrix}
1&M/2\\
M/2&d
\end{pmatrix}>0.
\]
It follows that condition \eqref{matrixS2} can be satisfied by choosing suitable weights thanks to Proposition \ref{prop:sharp-threshold}.

\end{proof}

\begin{remark}
Assume $\bar m_1=\bar m_2=1$, $\bar u_1=\bar u_2=0$, and
$f_1(m_1,m_2)=a_{11}m_1+a_{12}m_2$, $f_2(m_1,m_2)=a_{21}m_1+a_{22}m_2$. Under these assumptions, the two-population matrix coercivity condition \eqref{matrixS2} is satisfied if and only if
$$
DA_s+\lambda_{\mathbb T}D>0,
$$
that is, precisely when $a_{11}+\lambda_{\mathbb T}>0$, $a_{22}+\lambda_{\mathbb T}>0$, and
$$
(a_{11}+\lambda_{\mathbb T})(a_{22}+\lambda_{\mathbb T})
>
\frac{(d_1a_{12}+d_2a_{21})^2}{4d_1d_2}.
$$
Whenever these inequalities are fulfilled, the local exponential turnpike theorem becomes applicable. In this framework, the antisymmetric component
$$
\frac12
\begin{pmatrix}
0&d_1a_{12}-d_2a_{21}
\\
d_2a_{21}-d_1a_{12}&0
\end{pmatrix}
$$
does not affect the coercivity threshold, although it may generate oscillatory transient behavior. Moreover, the equivalence between \eqref{matrixS2} and the condition $DA_s+\lambda_{\mathbb T}D>0$ is proven in Proposition~\ref{prop:sharp-threshold}. The latter is exactly the usual positive-definiteness condition for a $2\times2$ symmetric matrix. Once these inequalities are satisfied, condition \eqref{matrixS2} is fulfilled, and Theorem~\ref{thm:twopopturnpike} ensures the local exponential turnpike property. The final assertion then follows directly from Proposition~\ref{prop:oscillations}.  In addition, by choosing appropriate diagonal weights, the matrix coercivity condition (\ref{matrixS2}) can be recovered even when the equal-weight condition fails. This shows that the introduction of the weight matrix $D$ enlarges the range of applicability of the coercivity criterion.
\end{remark}

\section{Conclusion and Discussion}
In this paper, we focus on the local turnpike stability criterion for two-population mean-field game models with the quadratic Hamiltonian on the torus  $\mathbb T^N$. Under suitable assumptions, we construct a weighted Lyapunov functional and establish the local turnpike stability property through a bootstrap and fixed-point argument, assuming smallness and regularity conditions on the initial and terminal data.  More precisely, we propose the matrix coercive condition \eqref{matrixS2}, which guarantees the local turnpike stability of the ergodic solutions under suitable structural assumptions.  Moreover, we establish a spectral characterization of condition \eqref{matrixS2} and show that it coincides with the eigenvalue problem associated with the linearization of a corresponding semilinear system.  Under suitable regularity and positivity assumptions on the admissible set, we investigate the relationship between the coercivity condition and the second variation of the energy functional associated with potential two-population mean-field game models. With this, we find a nondegenerate local minimizer to the potential mean-field games model is locally turnpike stable. We also discuss several interesting and challenging open problems.  It would be worthwhile to extend the stability criterion to general Hamiltonians. However, such an extension appears challenging due to the need for new Lyapunov functionals and the lack of suitable regularity results of Hamilton-Jacobi equations.

\bibliographystyle{plainurl}
\bibliography{Refsmultipopuplation}
\end{document}